\DeclareRobustCommand{\TitleEquation}[2]{\texorpdfstring{\StrLeft{\f@series}{1}[\@firstchar]$\if%
b\@firstchar\boldsymbol{#1}\else#1\fi$}{#2}}
\colorlet{darkblue}{blue!90!black}
\colorlet{darkred}{red!80!black}
\colorlet{darkgreen}{green!50!black}
\definecolor{gr}{rgb}   {0.,   0.69,   0.23 }
\definecolor{bl}{rgb}   {0.,   0.5,   1. }
\definecolor{mg}{rgb}   {0.85,  0.,    0.85}
\definecolor{yl}{rgb}   {0.8,  0.7,   0.}
\definecolor{or}{rgb}  {0.7,0.2,0.2}
\tikzset{
	dot/.style={circle,fill=black,draw=black,inner sep=0pt,minimum size=0.5mm},
	>=stealth,
	}
\tikzset{
	dot2/.style={circle,fill=black,draw=black,inner sep=0pt,minimum size=0.2mm},
	>=stealth,
	}
\tikzset{
	ddot/.style={circle,fill=white,draw=black,inner sep=0pt,minimum size=0.8mm},
	>=stealth,
	}
\tikzset{decision/.style={ % requires library shapes.geometric
        draw,
        diamond,
        aspect=1.5
    }}
\tikzset{dia2/.style
={diamond,fill=white,draw=black,inner sep=0pt,minimum size=1mm},
	>=stealth,
	}
\tikzset{dia/.style
={star,fill=black,draw=black,inner sep=0pt,minimum size=1mm},
	>=stealth,
	}
\tikzset{dia/.style
={diamond,fill=black,draw=black,inner sep=0pt,minimum size=1.3mm},
	>=stealth,
	}
\def\<#1>{\xusebox{#1}}
\newtheorem{theorem}{Theorem} [section]
\newtheorem{lemma}[theorem]{Lemma}
\newtheorem{proposition}[theorem]{Proposition}
\newtheoremstyle{myremark}% 〈name〉
{}% 〈Space above〉1
{}% 〈Space below 〉1
{}% 〈Body font〉
{}% 〈Indent amount〉2
{\bfseries}% 〈Theorem head font〉
{.}% 〈Punctuation after theorem head 〉
{ }% 〈Space after theorem head 〉3
{}%
\theoremstyle{myremark}
\newtheorem{remark}[theorem]{Remark}
\newtheorem*{ackno}{Acknowledgements}
\DeclareMathOperator*{\intt}{\int}
\DeclareMathOperator*{\supp}{supp}
\newcommand{\Ta}{\Theta}
\newcommand{\I}{\mathcal{I}}
\newcommand{\Z}{\mathbb{Z}}
\newcommand{\R}{\mathbb{R}}
\newcommand{\C}{\mathbb{C}}
\newcommand{\T}{\mathbb{T}}
\newcommand{\CC}{\mathcal{C}}
\newcommand{\D}{\mathcal{D}}
\let\Re=\undefined\DeclareMathOperator*{\Re}{Re}
\let\Im=\undefined\DeclareMathOperator*{\Im}{Im}
\let\P= \undefined
\newcommand{\P}{\mathbf{P}}
\newcommand{\Pii}{\mathbf{\Pi}}
\newcommand{\E}{\mathbb{E}}
\renewcommand{\L}{\mathcal{L}}
\newcommand{\F}{\mathcal{F}}
\newcommand{\al}{\alpha}
\newcommand{\be}{\beta}
\newcommand{\dl}{\delta}
\newcommand{\nb}{\nabla}
\newcommand{\Dl}{\Delta}
\newcommand{\eps}{\varepsilon}
\newcommand{\kk}{\kappa}
\newcommand{\g}{\gamma}
\newcommand{\ld}{\lambda}
\newcommand{\s}{\sigma}
\newcommand{\ft}{\widehat}
\newcommand{\wt}{\widetilde}
\newcommand{\dt}{\partial_t}
\newcommand{\ta}{\theta}
\renewcommand{\l}{\ell}
\renewcommand{\O}{\Omega}
\newcommand{\les}{\lesssim}
\newcommand{\jb}[1]
{\langle #1 \rangle}
\newcommand{\jbb}[1]
{[\hspace{-0.6mm}[ #1 ]\hspace{-0.6mm}]}
\newcommand{\ind}{\mathbf 1}
\newcommand{\too}{\longrightarrow}
\newcommand{\MM}{\mathcal{M}}
\newcommand{\N}{\mathbb{N}}
\renewcommand{\H}{\mathcal{H}}
\newcommand{\U}{\Theta}
\newcommand{\PP}{\mathbb{P}}
\renewcommand{\C}{\mathcal{C}}
\DeclareMathOperator{\Law}{Law}
\newcommand{\vertiii}[1]{{\left\vert\kern-0.25ex\left\vert\kern-0.25ex\left\vert #1 
    \right\vert\kern-0.25ex\right\vert\kern-0.25ex\right\vert}}
\setlist[enumerate]{label=\textup{(\roman*)}, align=left, leftmargin=0em, labelsep=0.4em, labelwidth=1.3em, itemindent=1.7em, itemsep=0.3em}
\newcommand{\wave}{\textup{wave}}
\numberwithin{equation}{section}
\numberwithin{theorem}{section}
\begin{document}
\baselineskip = 14pt

\title[Sine-Gordon model via stochastic quantization]
{A simple construction of the sine-Gordon model via stochastic quantization}

\author[M.~Gubinelli, M.~Hairer, T.~Oh, and Y.~Zine]
{Massimiliano Gubinelli\orcidlink{0000-0002-4014-2949}, Martin Hairer\orcidlink{0000-0002-2141-6561}, Tadahiro Oh\orcidlink{0000-0003-2313-1145},  and Younes Zine\orcidlink{0009-0001-7752-1205}}

\address{
Massimiliano Gubinelli\\
Mathematical Institute\\
University of Oxford\\
Andrew Wiles Building\\
Radcliffe Observatory Quarter\\
Woodstock Road\\
Oxford\\
OX2 6GG, 
 United Kingdom}

\email{gubinelli@maths.ox.ac.uk}

\address{
Martin Hairer\\
 \'Ecole Polytechnique F\'ed\'erale de Lausanne\\
1015 Lausanne\\ Switzerland, 
and 
Imperial College London, London SW7 2AZ, United Kingdom}

\email{martin.hairer@epfl.ch}

\address{
Tadahiro Oh, School of Mathematics\\
The University of Edinburgh\\
and The Maxwell Institute for the Mathematical Sciences\\
James Clerk Maxwell Building\\
The King's Buildings\\
Peter Guthrie Tait Road\\
Edinburgh\\ 
EH9 3FD\\
United Kingdom,
 and 
 School of Mathematics and Statistics, Beijing Institute of Technology, Beijing 100081,
China}

%\address{
%Tadahiro Oh, School of Mathematics\\
% School of Mathematics and Statistics, Beijing Institute of Technology, Beijing 100081,
%China\\
%and
%The University of Edinburgh\\
%and The Maxwell Institute for the Mathematical Sciences\\
%James Clerk Maxwell Building\\
%The King's Buildings\\
%Peter Guthrie Tait Road\\
%Edinburgh\\ 
%EH9 3FD\\
%United Kingdom} 
%
%
 
\email{hiro.oh@ed.ac.uk}

\address{
Younes Zine\\
 \'Ecole Polytechnique F\'ed\'erale de Lausanne\\
1015 Lausanne\\ Switzerland}

\email{younes.zine@epfl.ch}

\subjclass[2020]
{81T08,  60H15, 35K05, 35L71}

\keywords{sine-Gordon model;  stochastic quantization}

\begin{abstract}
We present a simple PDE  construction of the sine-Gordon measure below the first threshold 
($\be^2 < 4\pi$),  in both the finite and infinite volume settings,  by studying the corresponding parabolic sine-Gordon model.
We also establish pathwise global well-posedness
of the hyperbolic sine-Gordon model in finite volume
for $\be^2 < 2\pi$.
\end{abstract}

%\date{\today}

\maketitle

\tableofcontents

\section{Introduction}
%\label{SEC:1}

\subsection{Sine-Gordon model}%\label{SUBSEC:1-1} 

The Euclidean sine-Gordon field theory (with unit mass) on the periodic box
 $\T_L^2 = (\R/2\pi L\Z)^2$, $L\ge 1$, 
is defined by 
the  sine-Gordon measure on 
$\D'(\T_L^2)$ with a  formal density:
\begin{align}
\begin{split}
d\rho_L^\be(u) & =``Z_L^{-1} \exp \bigg( \frac\g\be \int_{\T_L^2}\cos\big(\be u(x)\big)dx \\
& \hphantom{XXXXXX}
- \frac12 \int_{\T_L^2} u(x)^2 dx -  \frac12\int_{\T_L^2}|\nabla u(x)|^2dx \bigg) du",
\end{split}
\label{mes}
\end{align}
where 
$\be, \g \in \R\setminus \{0\}$
and $ Z_L =  Z(\be, \g,L) > 0$ is a normalization constant.
 In the current finite volume setting, the measure \eqref{mes} has been rigorously constructed in the mathematical physics literature for the full subcritical range $0 < \be^2 < 8\pi$ \cite{DH1, DH2, DH3};
 see also \cite{Fro, FP, BB}.
Our main goal in this  note is to present 
a simple construction of the sine-Gordon measure $\rho^\be$
via a stochastic quantization approach 
in the restricted range $0 < \be^2 < 4\pi$.

A  naive attempt 
to define the sine-Gordon measure $\rho^\be_L$ 
is to view it as a weighted Gaussian measure:
 \begin{align*}
 d \rho^\be_L(u) = ``  Z_L^{-1}  \exp \bigg( \frac\g\be \int_{\T^2_L}\cos\big(\be u(x)\big)dx \bigg) d\mu_L(u)", 
 \end{align*} 
where $\mu_L$ denotes the massive Gaussian free field on $\T^2_L$.
Namely, $\mu_L$ is a Gaussian measure on $L$-periodic distributions 
with the covariance operator $(1-\Dl)^{-1}$.
Unfortunately, $\mu_L$ on $\T^2_L$ is supported on 
distributions of negative regularity  \cite{GF}
and  thus we cannot make sense of $\cos (\be u)$ %the interaction potential $\int_{\T^2}\cos(\be u) dx$
for  general $u \in \supp(\mu_L)$.

Let us make this last observation  more precise. 
Fix $L \ge 1$. Given  $N \in \N$,  let  $\Pii_N = \Pii_{N}^L$  be a smooth frequency projector
onto the frequencies  $\{n\in\Z_L^2:|n|\les N\}$
defined by 
\begin{align}
\ft{ \Pii_{N} f}(n) =   \chi_N(n) \ft f (n), \quad n \in \Z_L^2
: = (\Z/L)^2, 
\label{chi}
\end{align}
where 
$\chi_N(n) = \chi(N^{-1}  n)$ for a fixed 
function
 $\chi \in C^\infty(\R^2; [0, 1])$ 
such that 
$\widecheck \chi\equiv 1$ 
on $\{x\in\R^2:|x|\leq \tfrac12\}$
and $\supp (\widecheck \chi) \subset \{x\in\R^2:|x|\leq 1\}$.
 Then, a variant of Proposition \ref{PROP:igmc} below
 (see \cite[Theorem 3.5]{HS}), 
 we have, for any test function $\phi \in C^\infty (\T^2_L)$, 
\begin{align*}
\E_{\mu_L} [  \jb{ \cos(\Pii_N u), \phi}^2 ] \les N^{-\frac{\be^2}{4\pi}} \too 0,
%\label{mes2b}
\end{align*}
as $N \to \infty$, 
where  $\jb{ \cdot, \cdot}$ denotes the usual duality pairing between distribution and test function. 
Thus,   we expect that the frequency-truncated sine-Gordon measure:
\begin{align*}
Z_{L,N}^{-1}  \exp 
\bigg( \frac{\g_L}{\be} \int_{\T^2_L}\cos\big(\be \Pii_N u(x)\big)dx \bigg) d\mu_L(u)
\end{align*}
converges weakly to the Gaussian measure $\mu_L$ as $N \to \infty$. 

 In order to circumvent this issue, 
 we
allow  the coupling constant $\g_L$
 to depend on $N \in \N$ and to diverge as $N \to \infty$.
 This procedure is referred to as {\it renormalization} in the literature on Euclidean quantum field theory 
 and singular stochastic PDEs; see for instance \cite{GF, Hairer}. 
For our purpose, define $\g_{L, N} = \g_{L, N}(\be)$,  $N \in \N$, by setting
 \begin{equation}
\g_{L, N}  = e^{\frac{\be^2}{2}\s_{L, N}}, 
\label{CN}
 \end{equation}
where 
$\s_{L, N}$ is defined by\footnote{Here, we use a Riemann sum approximation and the decay of $\chi$ in both physical and Fourier spaces.}
 \begin{equs}
 \s_{L, N}  &=  \E_{\mu_L}\big[\big(\Pii_{N}u(x)\big)^2\big] 
  = \frac 1{4\pi^2} \sum_{n\in\Z_L^2}\frac{\chi_N^2(n)}{\jb{ n }^2}\frac 1{L^2} 
   = \frac1{4\pi^2}\intt_{\R^2}
\frac{\chi_N(z)dz}{1 + |z|^2} + O(1)\\
& = \frac 1{2\pi}\log  N + O(1)
\label{sN}
 \end{equs}
as $N \to \infty$, 
independent of $x\in\T_L^2$, where $\jb{n} = (1+ |n|^2)^\frac 12$
and  the remainder $O(1)$ is bounded uniformly in the parameter $L \ge 1$;
see   \cite[Lemma 3.2]{HRW}
for the $L= 1$ case.
From \eqref{CN} and~\eqref{sN}, we have 
$%\begin{align*}
\g_{L, N} \sim N^\frac{\be^2}{4\pi} \to \infty
$ 
as $N \to \infty$, independently of $L\ge 1$.
Then, we define  the renormalized  truncated sine-Gordon measure
$\rho_{L, N}^\be$ by 
 \begin{align}
 d \rho_{L, N}^\be(u) = Z_{L, N}^{-1}  \exp \bigg( \frac {\g_{L, N}}{\be} \int_{\T_L^2}\cos\big(\be \Pii_{N} u(x)\big)dx \bigg) d\mu_L(u).
 \label{mes3}
 \end{align} 
Inheriting the regularity property
of the Gaussian free field $\mu_L$, 
the truncated sine-Gordon measure $\rho_{L, N}^\be$
is supported on $\CC^{-\dl}(\T_L^2)$
for any $\dl > 0$.
%where  $\CC^s(\T^2) = B^s_{\infty, \infty}(\T^2)$ denotes
% the H\"older-Besov space defined in Section~\ref{SEC:N}.

We now state our main results.
 
\begin{theorem}\label{THM:main}
Let   $0 < \be^2 < 4\pi$, $L>0$,  and $\dl > 0$.
% Given $N \in \N$, let $\rho^\be_N$ be the truncated sine-Gordon measure  as in~\eqref{mes3}. 
 Then, 
 there exists a subsequence $\{\rho_{L, N_j}^\be\}_{j \in \N}$ 
converging weakly to a limiting probability measure 
 $\rho^\be_L$ on $\C^{-\dl}(\T_L^2)$ 
 as $j \to \infty$.
 \end{theorem}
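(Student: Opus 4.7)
The plan is to realize $\rho_{L,N}^\be$ as the invariant measure of the parabolic sine-Gordon stochastic quantization equation
\begin{equation*}
\partial_t u_N = \tfrac{1}{2}(\Dl - 1) u_N - \tfrac{\g_{L,N}}{2}\Pii_N \sin\!\bigl(\be \Pii_N u_N\bigr) + \xi,
\end{equation*}
driven by space-time white noise $\xi$ on $\R_+ \times \T_L^2$, and to transfer uniform-in-$N$ moment bounds on the stationary solution to tightness of the family $\{\rho_{L,N}^\be\}_N$ on $\C^{-\dl}(\T_L^2)$. That $\rho_{L,N}^\be$ is invariant follows from a formal (finite-dimensional, after mollification) integration by parts, so $\Law(u_N(t)) = \rho_{L,N}^\be$ whenever $\Law(u_N(0)) = \rho_{L,N}^\be$.

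\textbf{Da Prato--Debussche decomposition.} Let $\Psi$ denote the stationary solution of the linear equation $\partial_t \Psi = \tfrac{1}{2}(\Dl - 1)\Psi + \xi$, so that $\Law(\Psi(t, \cdot)) = \mu_L$ for every $t \ge 0$, and write $u_N = \Psi + v_N$. Expanding $\sin(\be \Pii_N(\Psi + v_N))$ via the addition formula, the remainder $v_N$ satisfies
\begin{equation*}
\partial_t v_N = \tfrac{1}{2}(\Dl - 1)v_N - \tfrac{1}{2}\Pii_N\,\Im\!\bigl(e^{i\be\Pii_N v_N} \Theta_N^+\bigr), \qquad \Theta_N^+ := \g_{L,N}\, e^{i\be\Pii_N \Psi}.
\end{equation*}
By the imaginary Gaussian multiplicative chaos theory of Proposition~\ref{PROP:igmc}, $\Theta_N^+$ converges in $C([0,T]; \C^{-\be^2/(4\pi) - \eps}(\T_L^2))$ and in every $L^p$ of the underlying probability space. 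For $\be^2 < 4\pi$ this regularity is strictly greater than $-1$, and by parabolic regularization $v_N$ inherits positive spatial regularity of order $2 - \be^2/(4\pi) - \eps > 1$, so that $e^{i\be\Pii_N v_N}$ is bounded and smooth enough to form a well-defined Bony paraproduct with $\Theta_N^+$; this is the standard subcritical Da Prato--Debussche regime.

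\textbf{Uniform bounds and tightness.} Combining the pointwise boundedness of $\sin$ and $\cos$ with the pairing estimate $|\jb{w, f}| \les \|w\|_{\C^{\al}}\|f\|_{\C^{-\al'}}$ for $\al > \al' > \be^2/(4\pi)$, a standard energy estimate on the $v_N$-equation yields a deterministic bound
\begin{equation*}
\|v_N\|_{L^\infty_T H^1} \le F\!\bigl(\|\Theta_N^+\|_{C_T \C^{-\al}}\bigr),
\end{equation*}
uniformly in $N$. Invariance then gives $\Law(u_N(t)) = \rho_{L,N}^\be$, and combining with $\Psi \in C_T \C^{-\dl/2}$ almost surely produces
\begin{equation*}
\E_{\rho_{L,N}^\be}\!\bigl[\|u\|_{\C^{-\dl/2}(\T_L^2)}^q\bigr] \le C_q
\end{equation*}
uniformly in $N$, for every $q < \infty$. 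Since $\C^{-\dl/2}(\T_L^2) \hra \C^{-\dl}(\T_L^2)$ is compact, Markov's inequality yields tightness of $\{\rho_{L,N}^\be\}_N$ on $\C^{-\dl}(\T_L^2)$, and Prokhorov's theorem delivers the claimed weakly convergent subsequence.

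\textbf{Main obstacle.} The technical heart of the argument lies in the uniform-in-$N$ a priori estimate for $v_N$. The threshold $\be^2 < 4\pi$ is precisely what makes the renormalized chaos $\Theta_N^+$ sufficiently regular (regularity above $-1$) for this closed energy inequality to hold without recourse to paracontrolled distributions or regularity structures; above this threshold, higher-order renormalization and a more sophisticated analysis would be required, and this is exactly why our simple construction is confined to the restricted range.
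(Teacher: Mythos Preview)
Your overall strategy matches the paper's: realize $\rho_{L,N}^\be$ as the invariant measure of the truncated parabolic dynamics, apply the Da~Prato--Debussche decomposition $u_N=\Psi+v_N$, use the chaos bound on $\Theta_N$, and deduce tightness from a uniform moment bound together with the compact embedding $\CC^{-\dl/2}\hookrightarrow\CC^{-\dl}$. However, there is a genuine gap in the step where you claim a deterministic bound
\[
\|v_N\|_{L^\infty_T H^1}\le F\bigl(\|\Theta_N^+\|_{C_T\CC^{-\al}}\bigr)
\]
with no dependence on the initial data. Under the stationary coupling you have $v_N(0)=u_N(0)-\Psi(0)$ with $\Law(u_N(0))=\rho_{L,N}^\be$, which is precisely the object you are trying to control; so any a~priori estimate on $v_N$ must carry a term involving $\|v_N(0)\|$, and your argument as written is circular. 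A bound independent of $v_N(0)$ would be a ``coming down from infinity'' statement, but the sine-Gordon nonlinearity is merely bounded, not coercive, so no such mechanism is available here. (Relatedly, $v_N(0)\in\CC^{-\dl}$ only, so an $H^1$ energy estimate is already problematic at $t=0$.)

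The paper closes this loop with a contraction-via-invariance trick that your sketch omits. Working in a weighted space $X^{-\dl,s}_T$ carrying an explicit factor $e^{t}\min(1,t)^{(s+\dl)/2}$, Proposition~\ref{PROP:key} gives
\[
\|v_N\|_{X^{-\dl,s}_T}\le K_1\|v_N(0)\|_{\CC^{-\dl}}+K_2(T)\,\|\Theta_N\|_{C_T\CC^{2\dl-s}_x}^{A},
\]
so that evaluation at time $T$ produces a factor $e^{-T}$ in front of the initial data. Invariance then yields
\[
\E\bigl[\|u_N(0)\|_{\CC^{-\dl}}^p\bigr]=\E\bigl[\|u_N(T)\|_{\CC^{-\dl}}^p\bigr]\le \wt K_1 e^{-pT}\,\E\bigl[\|u_N(0)\|_{\CC^{-\dl}}^p\bigr]+\wt K_3(T,L),
\]
and choosing $T$ large enough that $\wt K_1 e^{-pT}<\tfrac12$ makes the self-referential term absorbable. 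This damping-plus-invariance step is the actual heart of the proof; once you insert it, your outline becomes correct.
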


 As mentioned above, the construction of the sine-Gordon measure $\rho_L^\be$
 has been completed for the full subcritical range $0 < \be^2 < 8\pi$.
 See also \cite[Theorem 1.1]{ORSW} for the construction
 of the sine-Gordon measure $\rho_L^\be$
 for $0 < \be^2 < 4\pi$
 via a straightforward application of the variational approach~\cite{BG}. 
 Our main goal in this note
 is to present 
a simple proof of Theorem~\ref{THM:main} 
by studying  the  associated stochastic quantization 
equation 
for the sine-Gordon model
(namely, the parabolic sine-Gordon model~\eqref{SSG} below)
with 
techniques 
arising  from the study of singular stochastic PDEs.

Given $L \ge 1$, let $\rho^\be_L$ be  the sine-Gordon measure  on $\T^2_L$ constructed in Theorem \ref{THM:main}, which we view as a probability measure on  $\mathcal D'(\R^2)$, 
but supported on 
$2\pi L$-periodic distributions.
Then, by taking a large torus limit ($L \to \infty$) of the $2\pi L$-periodic sine-Gordon
measure $\rho^\be_L$, 
our approach also allows us to construct 
the sine-Gordon measure on $\R^2$.

 \begin{theorem}\label{THM:main2}
Let $0 < \be^2 < 4\pi$ and $\dl > 0$. 
% Given $N \in \N$, let $\rho^\be_N$ be the truncated sine-Gordon measure  as in~\eqref{mes3}. 
 Then, 
 there exists an increasing  sequence $\{L_j\}_{j \in \N}$
 of positive numbers such that 
the sequence  $\{\rho_{L_j}^\be\}_{j \in \N}$ 
of the $2\pi L_j$-periodic sine-Gordon measures
converges weakly to a limiting probability measure 
 $\rho^\be$ on $\mathcal{D}'(\R^2)$
 as $j \to \infty$.
 \end{theorem}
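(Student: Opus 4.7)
The plan is to establish tightness of the family $\{\rho_L^\be\}_{L \ge 1}$, viewed as Borel probability measures on $\mathcal{D}'(\R^2)$ supported on $2\pi L$-periodic distributions, and then invoke Prokhorov's theorem. Since the topology on $\mathcal{D}'(\R^2)$ is generated by local testing, tightness reduces to uniform-in-$L$ local moment bounds: for every compact $K \subset \R^2$, every $\dl > 0$, and every $p < \infty$, it suffices to exhibit a constant $C = C(K, \dl, p)$ such that
\begin{equation*}
\sup_{L \ge L_K} \E_{\rho_L^\be}\bigl[ \|\chi_K u\|_{\C^{-\dl}(\R^2)}^p \bigr] \le C,
\end{equation*}
where $\chi_K \in C_c^\infty(\R^2)$ equals $1$ on $K$ and $L_K$ is chosen large enough that $\supp \chi_K \subset \T_L^2$. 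Combined with the compactness of the embedding $\C^{-\dl} \hookrightarrow \C^{-\dl'}$ on bounded sets for $\dl < \dl'$, a diagonal argument over a compact exhaustion of $\R^2$ yields tightness on $\mathcal{D}'(\R^2)$, and Prokhorov then supplies a subsequence $\{L_j\}$ along which $\rho_{L_j}^\be \wto \rho^\be$.

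To produce these uniform local moment bounds, I would revisit the stochastic-quantization argument underlying Theorem~\ref{THM:main}. That proof studies the parabolic sine-Gordon flow on $\T_L^2$ (whose stationary measure is $\rho_L^\be$) via the Da~Prato--Debussche decomposition $u = \Psi_L + v_L$, where $\Psi_L$ is the stationary Ornstein--Uhlenbeck process with marginal $\mu_L$ and $v_L$ is the smoother remainder driven by the renormalized imaginary multiplicative chaos $\g_{L, N} e^{\pm i \be \Pii_N \Psi_L}$. Two ingredients need to be checked uniformly in $L$. First, the stochastic objects must enjoy local moment bounds in a negative Besov space uniformly in $(L, N)$; this follows from the fact that the renormalization constant $\s_{L, N}$ in \eqref{sN} carries an $O(1)$ remainder already stated to be uniform in $L$, together with the locally uniform convergence of the kernel of $(1 - \Dl)^{-1}$ on $\T_L^2$ to its $\R^2$ counterpart. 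Second, the deterministic PDE estimate for $v_L$ must be localizable---obtained by testing the equation against a spatial cutoff or, equivalently, by working in a polynomially weighted Sobolev space on $\T_L^2$---so that the resulting bound on $\|\chi_K v_L\|_{H^{1-\dl}}$ depends only on local norms of the stochastic data and hence is uniform in $L$.

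The main obstacle is the second step: on a very large torus, global energy arguments based on Poincar\'e's inequality deteriorate, so one must instead run the $v_L$-estimate in a localized fashion while keeping the constants uniform in $L$. The typical remedy is to commute a smooth cutoff (or a slowly varying weight of the form $\jb{x/L}^{-\kk}$) through the parabolic smoothing operator and absorb the commutator as a lower-order perturbation controlled by the stochastic data. Once these localized a~priori bounds are in hand, the stationarity of $\rho_L^\be$ under the parabolic flow transfers them to the measure, yielding the tightness estimate above; extraction of the subsequence $\{L_j\}$ and the identification of the limit $\rho^\be$ as a Borel probability measure on $\mathcal{D}'(\R^2)$ are then immediate consequences of Prokhorov's theorem.
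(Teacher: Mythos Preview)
Your proposal is correct and follows essentially the same route as the paper: uniform-in-$L$ local stochastic bounds (Propositions~\ref{PROP:psi}\,(ii) and~\ref{PROP:igmc}\,(ii)), a localized deterministic a~priori estimate obtained by commuting spatial cutoffs through the heat operator and absorbing the resulting lower-order commutators, and tightness via Prokhorov. The paper packages your ``hierarchy of cutoffs / slowly varying weight'' idea concretely as the exponentially weighted H\"older--Besov space $\CC^s_{\ld,M}(\R^2)$ built from an $M$-adic partition of unity, proving the localized a~priori bound (Proposition~\ref{PROP:key2}) exactly by the commutator mechanism you describe, with the smallness condition $T e^{\ld} M^{-1} \ll 1$ playing the role of your absorption step.
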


More precisely, 
the weak convergence of  $\{\rho_{L_j}^\be\}_{j \in \N}$  takes places 
as measures on 
the weighted H\"older-Besov space
$\CC^{-\dl}_{\ld, M} (\R^2)$ defined in \eqref{A3}
for some  $\ld, M \gg 1$.

\subsection{Parabolic sine-Gordon model}

Let $L\ge 1$ and consider 
the following stochastic nonlinear heat equation (SNLH) on $\T_L^2$
with a sine nonlinearity:
\begin{align}
%\begin{cases}
\dt u_L  + (1-\Dl)  u_L   + \g \sin(\be u_L) = \sqrt{2}\xi_L, 
%u_|{t=0}  = u_0 , 
%\end{cases}
\qquad (t, x) \in \R_+\times\T_L^2,
\label{SSG}
\end{align}
where  $u_L$ is a real-valued unknown and $\xi_L$ denotes a space-time white noise on $\R_+ \times \T_L^2$. Namely, $\xi_L$ is a distribution-valued Gaussian process with covariance given by
\begin{align}
\E\big[ \jb{\xi_L, \phi} \jb{\xi_L, \psi}\big] = \jb{\phi, \psi}_{L^2_{t,x}},
\label{noise}
\end{align}
for any $\phi, \psi \in \mathcal D(\R_+ \times \T^2_L)$. Here, $\jb{\cdot, \cdot}$ denotes the $\D(\R_+ \times \T^2_L)-\D'(\R_+ \times \T^2_L)$ duality pairing.

% with the space-time covariance formally given by
%\[ \E \big[\xi(x_1, t_1 ) \xi(x_2, t_2)\big] =  \dl(x_1- x_2) \dl(t_1-t_2).\]
The dynamical model \eqref{SSG}  corresponds to the so-called  {\it stochastic quantization}
\cite{PW}
of the quantum sine-Gordon model represented by the measure $\rho_L^\be$ in \eqref{mes}
in the sense that the sine-Gordon measure $\rho_L^\be$
is formally invariant under the dynamics generated by \eqref{SSG}.
For this reason, we refer
to \eqref{SSG}
as the {\it parabolic sine-Gordon model} in the following.

In \cite{HS, CHS}, 
the second author with Chandra and Shen
observed that 
the difficulty of the problem depends sensitively on the value of $\be^2 > 0$
(see, for example, Proposition~\ref{PROP:igmc})
and showed that 
 there is an infinite number of thresholds:
$\be^2 = \frac{j}{j+1}8\pi$, $j\in\N$, 
where one encounters new divergent stochastic objects, requiring further renormalizations.
By adapting  the theory of regularity structures~\cite{Hairer}
to the  non-polynomial setting,
they 
proved local well-posedness of~\eqref{SSG}  (for sufficiently smooth initial data)
in the full subcritical range $0<\be^2<8\pi$.

Let us describe  how to study the dynamical problem \eqref{SSG}, 
at least below the first threshold $0 < \be^2 < 4\pi$. % namely below the first threshold.
Due to the roughness of the space-time white noise $\xi$, 
a solution $u(t)$ for any fixed $t > 0$ is merely a distribution and 
thus we cannot make sense of the nonlinearity $\sin(\be u)$ in~\eqref{SSG}. 
We  overcome this issue by 
transferring  the renormalization  at the level of the measure 
in  \eqref{mes3}
 to a renormalization  at the level of the dynamical problem.
Given $N \in \N$, consider the following 
 frequency-truncated equation:
\begin{align}
 \dt u_{L, N}  + (1-\Dl)  u_{L, N} 
+ \g_{L, N} \Pii_{N} \sin (\be \Pii_{N} u_{L, N})  = \sqrt{2} \xi_L , 
\label{rSSG1}
\end{align}
where $\g_{L, N}$ is as in \eqref{CN}.
Then, 
using the first order expansion (see~\eqref{expa} below)
along with the basic results in Section~\ref{SEC:N}, 
one can easily show
 that there exist an almost surely positive random time $T>0$ and a non-trivial distribution 
$u_L \in C([0,T]; \CC^{-\dl}(\T_L^2))$, $\dl > 0$, such that 
$u_{L, N}$ converges 
in probability 
to $u_L$
in 
$C([0,T]; \CC^{-\dl}(\T_L^2))$ as $N \to \infty$.
The limit $u_L$ is then interpreted as the solution to the (formal) renormalized version of \eqref{SSG}:
\begin{align}
\dt u_L +(1-\Dl) u_L   +  \infty\cdot  \sin(\be u_L) = \sqrt{2}\xi_L.
\label{rSSG2}
\end{align}
This is what we mean by pathwise local well-posedness
of the parabolic sine-Gordon model~\eqref{rSSG2}.\footnote{Strictly speaking, 
in studying pathwise well-posedness, 
one normally considers the nonlinearity
$\g_{L, N} \sin (\be \Pii_{N} u_{L, N})$
with the truncated noise 
 $\Pii_{N} \xi_L$; see \eqref{WW3}.
In our current discussion, however, 
we consider~\eqref{rSSG1}
since the truncated sine-Gordon measure $\rho^\be_{L, N}$ in \eqref{mes3}
is invariant under the dynamics \eqref{rSSG1}.}

%\medskip

We briefly outline the proof of 
Theorem~\ref{THM:main}
whose details are presented in Section~\ref{SEC:proof}.
We consider 
 \eqref{rSSG1}
 with the truncated sine-Gordon measure initial data, namely
$\Law(u_{L, N}(0)) = \rho_{L, N}^\be$, 
where $\Law(X)$ denotes the law of a random variable $X$.
Hereafter, we assume that the space-time white noise $\xi_L$
in \eqref{rSSG1} is independent
of $u_{L, N}(0)$.
%(and the limiting sine-Gordon measure $\rho^\be$ once constructed).
Then, 
a standard argument yields  the following invariance result,
see for example 
\cite[Subsection 5.2]{ORW}
for details
in the context of  SNLH 
with an exponential nonlinearity.

\begin{lemma}\label{LEM:inv1}
Let $L \ge 1$ and $N \in \N$.
Then, 
the truncated sine-Gordon measure
 $\rho^\be_{L, N}$
 in~\eqref{mes3}
 is  invariant under the truncated dynamics \eqref{rSSG1}.\qed
\end{lemma}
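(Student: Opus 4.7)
The plan is to exploit the fact that the smooth truncation $\Pii_N$ reduces the nonlinear part of \eqref{rSSG1} to a finite-dimensional problem, leaving an autonomous linear Gaussian flow on the remaining modes.

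First I introduce the orthogonal Fourier projection $P_N^{\mathrm{sh}}$ onto the modes $\{n \in \Z_L^2 : |n| \leq N\}$. Since $\chi_N$ in \eqref{chi} is supported in $\{|n|\leq N\}$, we have the factorization $\Pii_N = \Pii_N P_N^{\mathrm{sh}}$, so the nonlinearity in \eqref{rSSG1} depends only on $v := P_N^{\mathrm{sh}} u_{L,N}$ and takes values in the finite-dimensional real subspace $V_N := \mathrm{Range}(P_N^{\mathrm{sh}})$. Writing $\xi_L = P_N^{\mathrm{sh}} \xi_L + (I-P_N^{\mathrm{sh}})\xi_L$ as a sum of two independent space-time white noises decouples \eqref{rSSG1} into
\begin{align*}
\dt v + (1-\Delta) v + \g_{L,N} \Pii_N \sin(\be \Pii_N v) & = \sqrt{2}\, P_N^{\mathrm{sh}}\xi_L,\\
\dt w + (1-\Delta) w & = \sqrt{2}\,(I-P_N^{\mathrm{sh}})\xi_L,
\end{align*}
where $w := u_{L,N} - v$. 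Correspondingly, $\mu_L$ factors as a product of its low- and high-mode Gaussian marginals, and by \eqref{mes3} the truncated sine-Gordon measure factors as $\rho^\be_{L,N} = \nu_N \otimes (I-P_N^{\mathrm{sh}})_\ast \mu_L$ with $\nu_N$ supported on $V_N$.

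The equation for $w$ is linear Ornstein--Uhlenbeck and admits $(I-P_N^{\mathrm{sh}})_\ast \mu_L$ as its canonical invariant measure. For the equation on $V_N$, the key step is to rewrite the drift in gradient form. Using that $\Pii_N$ is self-adjoint on $L^2(\T_L^2)$ (its symbol $\chi_N$ being even), a short computation shows that the drift equals $-\nabla_v H_N(v)$ with respect to the $L^2$ inner product on $V_N$, where
\begin{align*}
H_N(v) = \tfrac12 \int_{\T_L^2}\big(v^2 + |\nabla v|^2\big)\,dx - \frac{\g_{L,N}}{\be} \int_{\T_L^2} \cos(\be\Pii_N v)\,dx.
\end{align*}
The $V_N$-equation is therefore an overdamped Langevin SDE $dV = -\nabla H_N(V)\,dt + \sqrt{2}\,dW$; global well-posedness is immediate from the Lipschitz property of $\sin$, and the classical Fokker--Planck (or detailed-balance) argument identifies the Gibbs measure $Z_N^{-1} e^{-H_N(v)}\,dv$ as an invariant measure. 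By \eqref{mes3}, this Gibbs measure is precisely $\nu_N$.

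Combining invariance of $\nu_N$ under the $V_N$-dynamics with invariance of $(I-P_N^{\mathrm{sh}})_\ast\mu_L$ under the Ornstein--Uhlenbeck flow, together with the independence of the two noise components and the product structure of $\rho^\be_{L,N}$, then yields the claim. The only genuine subtlety is the mismatch between the smooth projector $\Pii_N$ appearing inside the nonlinearity and the sharp projector $P_N^{\mathrm{sh}}$ needed to decouple the system; this is exactly what the factorization $\Pii_N = \Pii_N P_N^{\mathrm{sh}}$ handles, and I expect this minor bookkeeping to be the only step requiring any real care.
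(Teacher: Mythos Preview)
The paper gives no proof of its own here; the lemma is stated with a tombstone and the reader is referred to \cite[Subsection~5.2]{ORW} for the standard argument. Your outline is precisely that standard reduction to a finite-dimensional overdamped Langevin system plus an independent Ornstein--Uhlenbeck flow on the complementary modes.

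One step is not justified as written. Your factorisation $\Pii_N=\Pii_N P_N^{\mathrm{sh}}$ relies on $\supp\chi_N\subset\{|n|\le N\}$, i.e.\ on $\chi$ being compactly supported. But the condition around \eqref{chi} is stated for $\widecheck\chi$ (the physical-space convolution kernel), not for $\chi$; this choice is what makes the spatial localisation in the proofs of Propositions~\ref{PROP:psi} and~\ref{PROP:igmc} clean. With $\widecheck\chi\in C^\infty_c$, the multiplier $\chi$ is real-analytic and cannot vanish outside a ball, so $\Pii_N$ is not finite-rank, the density in \eqref{mes3} depends on all Fourier modes of $u$, and your product decomposition $\rho^\be_{L,N}=\nu_N\otimes(I-P_N^{\mathrm{sh}})_\ast\mu_L$ fails. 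The fix is routine and is what \cite{ORW} actually carries out: superimpose a further sharp Galerkin projector $P_M^{\mathrm{sh}}$, run your gradient-structure\,/\,detailed-balance argument on that genuinely finite-dimensional system (with potential involving $\Pii_N P_M^{\mathrm{sh}}u$), and then send $M\to\infty$ using the boundedness and Lipschitz continuity of the sine nonlinearity. If instead one reads the paper as intending $\chi$ itself to be the compactly supported bump---consistent with the informal description ``frequency projector onto $\{|n|\les N\}$''---then your argument is complete as stated.
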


Our main strategy for proving Theorem \ref{THM:main}
is to study the truncated equation \eqref{rSSG1}
and obtain uniform (in~$N$) pathwise a priori bound
on the solution $u_{L, N}$ 
constructed in Lemma~\ref{LEM:inv1}
(Proposition \ref{PROP:key}).
Since we only consider the range $0 < \be^2 < 4\pi$, 
we  proceed with the  first order expansion as in \cite{McKean, BO96,DPD}:
\begin{align}
u_{L, N} = \Psi_L + v_{L, N}, 
\label{expa}
\end{align}
 where $\Psi_L$ is the so-called stochastic convolution, namely the unique stationary
 solution to
\begin{equ}
(\dt + 1 - \Dl) \Psi_L  = \sqrt{2}\xi_L\;.
\label{Psi0}
\end{equ}

By substituting  \eqref{expa} into \eqref{rSSG1}, we see  that $v_N = u_N - \Psi$ satisfies
\begin{align}
 \dt v_{L, N} + (1 -\Dl)  v_{L, N}
+ \g_{L, N} \Pii_N  \sin \big(\be( v_{L, N} + \Psi_{L, N} )\big) = 0\;,
\label{v0}
\end{align}
where
 $\Psi_{L, N} = \Pii_N \Psi_L$
 with $\Pii_N$  as in \eqref{chi}. 
By rewriting \eqref{v0} in its  mild formulation, we have  
\begin{align}
v_{L, N} (t)= e^{t (\Dl - 1)} \big( u_{L, N}(0) - \Psi_{L}(0)\big)  
- 
\Pii_{N}  \I \big(  \Im (f( v_{L, N}) \Theta_{L, N})\big) (t), 
\label{v1}
\end{align} 
where $f(v) = e^{i \be v}$, 
 $\I$ denotes  the heat Duhamel integral operator 
defined by 
\begin{align}
\I(F) (t) = \int_0^t e^{(t-t') (\Dl-1)} F(t')dt',
\label{duha} 
\end{align}
and $\Theta_{L, N}$ is the so-called {\it imaginary Gaussian multiplicative chaos} defined by
\begin{align}
\Theta_{L, N} = \g_{L, N}  e^{ i \be \Psi_{L, N}}
= e^{\frac{\be^2}{2}\s_{L, N}}
e^{ i \be \Psi_{L, N}}.
\label{igmc}
\end{align}
See Proposition~\ref{PROP:igmc}
for the (limiting) regularity property of $\Ta_{L, N}$.
 A key ingredient in the proof of Theorem~\ref{THM:main}
is a deterministic global-in-time a priori bound on solutions to 
the associated  nonlinear heat equation;
 see Proposition~\ref{PROP:key} below.
 Then, 
tightness of $\{\rho_{L, N}^\be\}_{N \in \N}$
follows 
 from applying such a global-in-time a priori bound
to the solution $v_N$ in \eqref{v1}
with   $ \Law(u_{L, N}(0)) = \rho_{L, N}^\be$
and combining it with the invariance
of $\Law( v_{L, N}(t) + \Psi_L(t))$.
See Section~\ref{SEC:proof} for details.

The proof of Theorem \ref{THM:main2} then follows from a refinement of the arguments leading to Theorem \ref{THM:main} and is presented in Section~\ref{SEC:R2}.

\begin{remark}
\begin{enumerate}
\item 
In recent years, there have been several works \cite{AK, GH2, AK2}
on the PDE construction of the $\Phi^4_3$-measures
on both $\T^3$ and $\R^3$, 
arising in Euclidean quantum field theory;
see also \cite{MW, OTWZ}.
While the contracting (= defocusing)  nature
of the interaction potential played an essential role in the aforementioned works, 
the sine-Gordon model is {\it not} contracting.
We instead make use of the fact that 
the nonlinearity $\sin(\be u)$ in 
 \eqref{rSSG2}  is bounded and Lipschitz;
see \eqref{X3} below.
See also recent preprints \cite{CFW, SZZ, BC}.
In particular, 
 in \cite[Corollary 3]{CFW}, Chandra, Feltes,  and Weber proved pathwise 
 global well-posedness for the parabolic sine-Gordon model~\eqref{rSSG2} 
% (with $L^\infty$-initial data)
below  the second threshold ($0 < \be^2 < \frac{16}{3}\pi$)
 and also deduced from their bounds the existence of an invariant measure for~\eqref{rSSG2} 
 by  the Bogoliubov-Krylov argument. 
In a very recent preprint \cite{BC}, 
Bringmann and Cao extended 
global well-posedness 
of~\eqref{rSSG2} 
to the third threshold  ($0 < \be^2 < 6 \pi$).
We point out that the results
\cite{CFW, SZZ, BC}
are restricted to the periodic setting.

\item In a recent preprint \cite{GM}, 
the first author with Meyer
constructed the sine-Gordon measure  in the infinite-volume setting 
(namely on $\R^2$) 
for $0 < \be^2 < 6\pi$
via 
the forward-backward stochastic differential equation approach;
see also a recent work \cite{Bara} on the range $0 < \be^2 < 4\pi$.
It is interesting to see how the third threshold
$\be^2 = 6\pi$ appears
in the recent (very independent)  works \cite{BB, GM, BC}.

\end{enumerate}
\end{remark}

 \begin{remark}
\begin{enumerate}
\item Proposition~\ref{PROP:key}
 yields  a pathwise global-in-time
a priori bound for solutions to the parabolic sine-Gordon model
\eqref{rSSG2}
and thus its pathwise global well-posedness
in the range $0 < \be^2 < 4 \pi$.
This provides an alternative, simpler proof
of the pathwise well-posedness result \cite[Corollary~3]{CFW}
in our much simpler setting. % (i.e.~below the first threshold $0 < \be^2 < 4\pi$).

\item 
Consider 
the following
hyperbolic sine-Gordon model on $\T^2 = (\R/2\pi \Z)^2$:
\begin{align}
\dt^2 u + \dt u + (1- \Dl)  u   +  \g \sin(\be u) = \sqrt{2}\xi, 
\label{WW1}
\end{align}
 corresponding to the 
so-called ``canonical" stochastic quantization~\cite{RSS} 
of the sine-Gordon model.
In~\cite{ORSW}, 
for the range $0 < \be^2 < 2\pi$, 
the authors constructed invariant Gibbs dynamics for~\eqref{WW1}, 
 based on Bourgain's invariant measure argument
\cite{BO94, BO96}.
A straightforward adaptation of the proof
of Proposition~\ref{PROP:key}
yields
a pathwise  global-in-time a priori bound on solutions to \eqref{WW1}
for the range $0 < \be^2 < 2\pi$, 
providing its pathwise global well-posedness
in the same range of $\be^2$
(Theorem \ref{THM:GWP}).
However, due to the lack of smoothing on (homogeneous)
linear solutions for the (damped) wave equation, this argument does not 
easily yield a PDE construction of the sine-Gordon measure $\rho^\be_{L = 1}$.
See Appendix~\ref{SEC:wave}
for a further discussion.
\end{enumerate}
\end{remark}

\section{Notations and preliminary results}
\label{SEC:N}

\subsection{Notations}\label{SUBSEC:N1} 
 We write $ A \les B $ to denote an estimate of the form $ A \leq CB $
 for some $C > 0$. 
We write  $ A \sim B $ to denote $ A \les B $ and $ B \les A $.
We may use subscripts to denote dependence on external parameters; for example,
 $A \les_\ta B$ means $A \leq C(\ta) B$.

Let $L \ge 1$. For $x \in \T^2_L \cong [-L\pi, L\pi)^2$, 
we set
\[ |x |_{L} = \min_{m \in \Z^2} |x  + 2\pi L m|_{\R^2},\]
where $|\cdot |_{\R^2}$ denotes the standard Euclidean distance on $\R^2$.
%When there is no confusion, 
%we simply use $|\cdot|$ for $|\cdot|_{\T^2}$ and $|\cdot |_{\R^2}$.

%By the (assumed)
%independence of 
%  the truncated sine-Gordon 
%measures $\rho_{L, N}^\be$ in \eqref{mes3}
%and (the distribution of) the  space-time white noise $\xi_L$ on $\R\times \T^2_L$
%(which is also viewed as the  $2\pi L$-periodic (in space)
%space-time white noise  on $\R\times \R^2$),  
%we can write the probability space $\O$ as
%\begin{equation}
%\O = \O_1 \times \O_2.
%\label{decomp1}
%\end{equation}
%such that, for each $L \ge 1$,  the random initial data $u_{L,  N}(0)$
%in \eqref{rSSG1} with 
%$\Law(u_{L, N}(0)) = \rho_{L, N}^\be$
%depends only on $\o_1 \in \O_1$, 
% while the $2\pi L$-periodic space-time white noise $\xi_L$
%in  \eqref{rSSG1} and~\eqref{Psi0}
%(and hence the stochastic convolution $\Psi_L$)
% depends only on $\o_2 \in \O_2$. 
% In view of \eqref{decomp1}, we also write the underlying probability measure 
% $\PP$ on $\O$ as 
% \begin{align}
% \PP = \PP_1 \otimes \PP_2,
% \label{decomp2}
% \end{align}
%  where $\PP_j$ is the marginal probability measure on 
% $\O_j$, $j = 1, 2$.
%

Fix $L\ge 1$. We define the Fourier transform $\ft f(n)$ (or $\F_{\T^2}(f)(n)$ depending on the context) of a function $f$ on $\T^2_L$
by 
\begin{align}
\ft f(n) %= \F_{\T^2_L} (f)(n)
= \frac{1}{2\pi} \int_{\T^2_L} f(x) e^{-i n \cdot x} dx, \quad n \in \Z^2_L, 
\label{FT1}
\end{align}
with the associated Fourier series expansion:
\begin{align*}
f(x) = \frac{1}{2\pi L^2} \sum_{n \in \Z_L^2} \ft f(n)e^{i n \cdot x}.
%\label{FT2}
\end{align*}
Note that we have
\begin{align}
\ft {f*_Lg}(n) = 2\pi  \ft f(n) \ft g(n)
%\ft {fg}(n) = \frac 1{2\pi L} \sum_{k \in \Z^2_L} \ft f(k) \ft g (n-k).
\label{FT3}
\end{align}
for any $n \in \Z_L^2$, where $*_L$ denotes the convolution on $\T^2_L$
defined by 
\[ f*_Lg(x) = \int_{\T^2_L}f(x - y) g(y) dy .\]

Given a function $f$ on $\R^2$,
we define its Fourier transform 
 $\F(f)$ (or $\F_{\R^2}(f)$ depending on the context)  by 
\begin{align}
 \F(f) (\xi) = \frac 1{2\pi}\int_{\R^2} f(x) e^{- i \xi\cdot  x} dx.
 \label{FT3a}
\end{align}
With this convention, we have
\begin{align}
\begin{split}
f(x)  & = \frac 1{2\pi} \int_{\R^2} \F(f)(\xi) e^{i \xi \cdot x} d\xi,\\
\F(f*g)(\xi)
& = 2\pi
\F(f)(\xi) \F(g)(\xi).
\end{split}
\label{FT4}
\end{align}

In the following, 
we  introduce 
 the Littlewood-Paley frequency projectors
for 
  $2\pi L$-periodic functions, $L \ge 1$
  via
the Poisson summation formula
so that they are consistent for different values of  $L \ge 1$.
Let $F$ be a continuous function on $\R^2$ 
such that (i)~there exists $\dl > 0$ such that 
\[|F(x)| \les (1+|x|)^{-2-\dl}\]
for any $x \in \R^2$, 
and (ii)~its Fourier transform restricted to $\Z^2_L$ satisfies 
\[ \sum_{n \in \Z^2_L} |\F(F)(n)| < \infty.\]
Define a function $f$ on $\T_L^2 \cong [-\pi L , \pi L)^2$ by setting
\begin{align}
 f(x) = \sum_{m \in \Z^2} F(x + 2\pi mL).
 \label{Poi1y}
\end{align}
Then, from \eqref{FT1}, and \eqref{FT3a}, 
we have 
\begin{align}
 \ft f (n) =  \int_{\T^2_L} \sum_{m \in \Z^2} F(x + 2\pi m L) \frac{e^{-  i n\cdot  x}}{2\pi }dx
=  \F (F) (n)
\label{Poi1}
\end{align}
for any $n \in \Z_L^2$.
In particular, 
the Poisson summation formula
on $\T^2_L$ with our convention
reads as follows:
\begin{align}
\frac{1}{2\pi L^2}\sum_{n \in \Z^2_L} \F (F)(n) e^L_n(x) 
= 
\frac{1}{2\pi L^2} \sum_{n \in \Z^2_L} \ft f(n) e^L_n(x) 
= 
 \sum_{m \in \Z^2} F(x + 2\pi m L).
%
%\big\| \jb{\nb}^\al K_t^{L}\big\|_{L^1(\T^2_L)}
%= \bigg\|
%\sum_{k \in \Z^2}
%\big(\jb{\nb}^\al K_t^{\R^2}\big)(x + 2\pi L k) 
%\bigg\|_{L^1(\T^2_L)}
%\les_\al \min (1, t)^{-\frac \al 2}, 
\label{Poi0}
\end{align}
See
\cite[Theorem 3.2.8]{Gra1}.

%Given $k \in \N \cup \{0\}$, 
%let $\Phi_k$ be the kernel of the Littlewood-Paley projector (on $\R$)
%onto  frequencies  $\{ |\xi|\sim 2^k\}$ for $k \in \N$
%and $\{ |\xi|\les 1\}$ for $k = 0$.

Let $\eta:\R \to [0, 1]$ be a smooth  bump function supported on $[-\frac{8}{5}, \frac{8}{5}]$ 
and $\eta\equiv 1$ on $\big[-\frac 54, \frac 54\big]$.
For $\xi \in \R^2$, we set 
\begin{align*}
\ft \Phi_0(\xi) = \eta(|\xi|)\quad \text{and}\quad 
\ft \Phi_{k}(\xi) = \eta\Big(\frac{|\xi|}{2^k}\Big)-\eta\Big(\frac{|\xi|}{2^{k-1}}\Big)
%\label{Poi1x}
\end{align*}
for $k \in \N$.
Defining $\Phi$ by 
$\ft \Phi(\xi) = \eta(|\xi|)-\eta(2^{-1} |\xi|)$, we have
\begin{align*}
 \Phi_k(x) =  2^{2k} \Phi(2^k x).
%\label{Poi1xx}
\end{align*}
Note that $\sum_{k = 0}^\infty \ft \Phi_k (\xi) = 1$
 for any $\xi \in \R^2$. 
We then define the Littlewood-Paley projector $\P_k$ on $\R^2$
by 
\begin{equ}
 \P_{k} f  =  \frac 1{2\pi} \Phi_k*f.
\label{Poi1z}
\end{equ}
for 
 $k \in \Z_{\ge 0} := \N \cup\{ 0\}$; see \eqref{FT4}.

Let $L \ge 1$.
For $k \in \Z_{\ge 0} $, define $\Psi_k^L$ on $\T_L^2$ by
\begin{align}
  \Psi_k^L (x)= \sum_{m \in \Z^2} \Phi_k(x + 2\pi mL).
  \label{Poi1a}
\end{align}
From \eqref{Poi1y} and  \eqref{Poi1}, we have
\begin{align}
\ft \Psi_k^L(n) = \F(\Phi_k)(n), \quad n \in \Z_L^2.
\label{Poi2}
\end{align}
We then define the Littlewood-Paley projector $\P_k^L$ 
(for functions on $\T^2_L$) 
onto the frequencies $\{n \in \Z^2_L : |n|\sim 2^k\}$ for $k \in \N$
and $\{n \in \Z^2_L : |n|\les 1\}$ for $k = 0$
by setting
\begin{align}
 \P_{k}^L f  = \frac{1}{2\pi} \Psi_k^L*_L f.
 \label{Poi2x}
\end{align}
Then, from 
\eqref{FT3} and \eqref{Poi2}, we have 
\[ \ft {\P_k^L f} (n)
= 
\ft \Psi_k^L(n)\ft f (n)
= \F(\Phi_k)(n)
\ft f (n), \quad n \in \Z_L^2.\]
Given a $2\pi L$-periodic function $f$ on $\R^2$, 
it follows 
from \eqref{Poi2x}, 
\eqref{Poi1a}, and \eqref{Poi1z} that 
\begin{align}
\P^L_k f (x) = \P_k f (x)
\label{Poi3}
\end{align}
for any $x \in \R^2$, where we view the left-hand side
as a $2\pi L$-periodic function on $\R^2$.

Given  $s \in \R $ and $1 \le p  \le \infty$,
the $L^p$-based Sobolev space $W^{s,p}(\T_L^2)$ is defined 
by  the norm:
\begin{align*}
\|f\|_{W^{s,p}(\T_L^2)} = \| \jb \nb ^s f   \|_{L^p(\T_L^2)} = \big\| \F_{\T^2_L}^{-1} \big[ \jb n ^s \ft f (n) \big]\! \big\|_{L^p(\T_L^2)}.
\end{align*}
When $p = 2$, we set $H^s(\T_L^2) = W^{s, 2}(\T_L^2)$.
 Given  $s \in \R $ and $1 \le p, q  \le \infty$,
the Besov space $B^{s}_{p,q}(\T_L^2)$ is defined as the closure of 
$C^\infty(\T_L^2)$  under the norm:
\begin{align}
\|f\|_{B^{s}_{p,q}(\T_L^2)} =  \Big\| 2^{ks} \|\P_k^L f \|_{L^p_x(\T_L^2)} \Big\|_{\l^q_k(\Z_{\ge 0})}, 
\label{B1}
\end{align}
where 
$\P_k^L$ is as in  \eqref{Poi2x}.
When $p = q = \infty$, we set  $\CC^s(\T_L^2) = B^s_{\infty, \infty}(\T_L^2)$
to denote the H\"older-Besov space. 
We define   $W^{s,p}(\R^2)$, $B^{s}_{p,q}(\R^2)$,  and $\CC^s(\R^2)$
in an analogous manner.
In particular, $B^s_{p, q}(\R^2)$ is defined as the closure of $C^\infty_c(\R^2)$
under the norm:
\begin{align}
\|f\|_{B^s_{p, q}(\R^2)} =  \Big\| 2^{ks} \|\P_k f \|_{L^p_x(\R^2)} \Big\|_{\l^q_k(\Z_{\ge 0})}, 
\label{B2}
\end{align}
where $\P_k$ is as in \eqref{Poi1z}.
Then, 
from \eqref{Poi3}, \eqref{B1}, and \eqref{B2}, 
we have
\begin{align}
\|f\|_{\CC^s(\T^2_L)} 
= \|f\|_{\CC^s(\R^2)}\label{B3}
\end{align}
for  a $2\pi L$-periodic function $f$ on $\R^2$.

%
%Suppose that a function $f$ on $\R^2$ is supported on 
%$ [-\pi L , \pi L)^2\cong  \T_L^2$.
%Then, from 
%\eqref{Poi2x}, \eqref{Poi1a}, and \eqref{Poi1z}, we have
%\begin{align*}
%\P_k^L f(x) = 
%\end{align*}

When it is clear from context, we may suppress the spatial domain
$\T^2_L$ or $\R^2$.
We also use shorthand notations such as
 $C_T\CC^s_x= C_T\CC^s_x(\T^2_L) = C([0, T]; \CC^s(\T_L^2))$.

\subsection{Deterministic estimates}
%\label{SUBSEC:N2} 

In the following, we
state basic deterministic estimates.
We first recall the Schauder estimate.
%which follows from Young's inequality.
% see  \cite[Lemma A.7]{GIP}. or \cite[Section 3]{MW2} for a proof. 

\begin{lemma}\label{LEM:Schauder}
Let  $s, s_1, s_2 \in \R$ with $s_2 \ge s_1$
and  $\mathcal M = \T^2_L$ or $\R^2$. Then, 
given $\al \ge 0$, 
we have
\begin{align*}
% \big\| |\nb|^\al e^{t\Dl} f \big\|_{\CC^{s}(\MM)} & \les t^{- \frac{\al}{2}} \|f\|_{\CC^{s}(\MM)}, 
% \label{Sch1}\\
\|e^{t\Dl} f \|_{\CC^{s_2}(\MM)} & \les \min(1,t)^{\frac{s_1 - s_2}{2}} \|f\|_{\CC^{s_1}(\MM)}
%\label{Sch2}
\end{align*}
for any $t > 0$, 
where the implicit constants are independent of $ L \ge 1$
when $\MM = \T^2_L$.\qed
\end{lemma}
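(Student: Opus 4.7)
The plan is to reduce the estimate to frequency-localized bounds via the Littlewood--Paley decomposition and then exploit the heat semigroup's decay on high-frequency blocks. First, I would unwind the $\CC^{s_2}$ norm as $\sup_{k \ge 0} 2^{k s_2} \|\P_k e^{t\Delta} f\|_{L^\infty}$ (replacing $\P_k$ by $\P_k^L$ on the torus, as in \eqref{Poi2x}). Since $e^{t\Delta}$ is a Fourier multiplier it commutes with the projectors, so it suffices to prove the uniform-in-$L$ smoothing bound
\begin{equation}
\|e^{t\Delta}\P_k f\|_{L^\infty(\MM)} \les e^{-c t 2^{2k}} \|\P_k f\|_{L^\infty(\MM)}, \qquad k \ge 1, \label{keyS0}
\end{equation}
for some $c > 0$, together with the trivial bound $\|e^{t\Delta}\P_0 f\|_{L^\infty} \les \|\P_0 f\|_{L^\infty}$ at the lowest block.

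Next, I would establish \eqref{keyS0} on $\R^2$ by writing $e^{t\Delta}\P_k f = K_{t,k}*\P_k f$ with $K_{t,k} = \F^{-1}\bigl(e^{-t|\xi|^2}\wt\Phi_k(\xi)\bigr)$, where $\wt\Phi_k$ is a smooth cut-off on the annulus $|\xi|\sim 2^k$ equal to one on $\supp \ft \Phi_k$. A rescaling $\xi = 2^k \eta$ reveals that $\|K_{t,k}\|_{L^1(\R^2)}$ reduces to the $L^1$-norm of a uniformly Schwartz family multiplied by $e^{-ct 2^{2k}}$, so $\|K_{t,k}\|_{L^1(\R^2)} \les e^{-ct 2^{2k}}$, and Young's inequality yields \eqref{keyS0} on $\R^2$. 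To transfer the estimate to $\T^2_L$, I would invoke the construction \eqref{Poi1a}--\eqref{Poi2x}: since $\P_k^L$ is obtained by periodization of $\P_k$, one has $\P_k^L g = \P_k g$ on $\R^2$ for every $2\pi L$-periodic $g$ (as in \eqref{B3}), and the torus heat semigroup similarly arises by Poisson summation from the Euclidean one. The $\R^2$-bound therefore lifts to $\T^2_L$ with the \emph{same} constant, independently of $L \ge 1$.

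Finally, inserting \eqref{keyS0} into the definition of the Besov norm gives
\[
\|e^{t\Delta}f\|_{\CC^{s_2}(\MM)} \les \|f\|_{\CC^{s_1}(\MM)} \sup_{k\ge 0} 2^{k(s_2 - s_1)}\,e^{-c t 2^{2k}}.
\]
For $t \ge 1$ the supremum is bounded by a universal constant (the $k=0$ term contributes $O(1)$ and higher blocks decay super-exponentially), while for $0 < t \le 1$ optimizing in the continuous variable $2^{2k}$---the maximizer sits at $2^{2k}\sim (s_2-s_1)/(2ct)$ when $s_2 > s_1$, and the $k=0$ term dominates when $s_2 = s_1$---produces $\sup_k 2^{k(s_2-s_1)} e^{-ct 2^{2k}} \les t^{(s_1-s_2)/2}$. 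Combining both regimes yields the advertised $\min(1,t)^{(s_1-s_2)/2}$ factor.

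The only delicate point is the $L$-uniformity in \eqref{keyS0}. This is precisely where the Poisson-summation construction of the Littlewood--Paley projectors from Section~\ref{SUBSEC:N1} pays off: it guarantees that the Fourier multiplier bounds on $\R^2$ transfer to $\T^2_L$ without loss, uniformly in $L \ge 1$. The remaining ingredients---the $L^1$ estimate on $K_{t,k}$ and the sup optimization---are elementary, and the mass term $-\Id$ absent from $e^{t\Delta}$ can only improve matters (it would contribute a harmless factor $e^{-t}$) if one were to replace $e^{t\Delta}$ by $e^{t(\Delta - 1)}$.
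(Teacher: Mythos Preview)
The paper does not supply a proof of this lemma---the \qed immediately following the statement signals that the authors regard the Schauder estimate as standard and omit its proof. Your argument is the canonical one (frequency-localized kernel bounds via rescaling and Young's inequality, followed by optimization over Littlewood--Paley blocks) and is correct; in particular, the $L$-uniformity is handled exactly as the paper's set-up in Subsection~\ref{SUBSEC:N1} intends, though the identity $\P_k^L g = \P_k g$ for periodic $g$ that you invoke is \eqref{Poi3} rather than \eqref{B3}.
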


%\noi
%The
%
%
% Note that, in the periodic setting (i.e.~on $\T^2_L$), 
%we do not have any decay (as $t \to \infty$) due to the zeroth Fourier  mode.
%

%\begin{proof}
%Let $K_t^{\R^2}$ and $K_t^{L}$ be the convolution kernels 
%of  the heat semigroup $ e^{t\Dl}$ on $\R^2$ and $\T^2_L$, respectively.
%When  $\MM = \R^2$, 
%the Schauder estimate %\eqref{Sch1} and 
%\eqref{Sch2} follows from 
%Young's inequality and 
%\begin{align}
%%\big\| |\nb|^\al K_t^{\R^2}\big\|_{L^1(\R^2)}
%%\sim_\al t^{-\frac \al 2}
%%\quad \text{and}\quad 
%\big\| \jb{\nb}^\al K_t^{\R^2}\big\|_{L^1(\R^2)}
%\sim_\al \min (1, t)^{-\frac \al 2}
%\label{Sch3}
%\end{align}
%for any $\al \ge 0$.
%From \eqref{FT3},  we have 
%$\ft K^{L}_t(n) = (2\pi L)^{-1}e^{-t |n|^2}
%=L^{-1}\ft K^{\R^2}_t(n)$ for each $n \in \Z_L^2$.
%Then, from the Poisson summation formula
%\eqref{Poi0}
%and \eqref{Sch3}, we have 
%\begin{align*}
%\big\| \jb{\nb}^\al K_t^{L}\big\|_{L^1(\T^2_L)}
%= \bigg\|
%\sum_{k \in \Z^2}
%\big(\jb{\nb}^\al K_t^{\R^2}\big)(x + 2\pi L k) 
%\bigg\|_{L^1(\T^2_L)}
%\les_\al \min (1, t)^{-\frac \al 2}, 
%\end{align*}
%uniformly in $L \ge 1$, 
%from which  \eqref{Sch2} follows via Young's inequality on $\T^2_L$.
%\end{proof}
%

The following product estimate is due to Bony \cite{Bony};
 see  \cite[Lemma 2.1]{GIP} or \cite[Section 3]{MW2} for a proof.

\begin{lemma}\label{LEM:prod}
Let $L\ge 1$ and $s_1 < 0 < s_2$ with $s_1 + s_2 >0$. 
Let $\mathcal M = \T^2_L$ or $\R^2$. Then, we have
\begin{equ}
\|fg \|_{\CC^{s_1}(\mathcal M)} \les \| f \|_{\CC^{s_1}(\mathcal M)} 
\| g\|_{\CC^{s_2}(\mathcal M)}, 
\end{equ}
where the implicit constant is independent of $L \ge 1$
when $\MM = \T^2_L$.\qed
\end{lemma}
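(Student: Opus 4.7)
The plan is to prove Bony's product estimate on $\R^2$ first and then transfer it to $\T^2_L$ with an $L$-independent constant by exploiting the identification \eqref{B3} built into our setup.

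First, I would use the Bony paraproduct decomposition. Writing $f = \sum_j \P_j f$ and $g = \sum_k \P_k g$ (on $\R^2$), I split
\begin{equation*}
 fg = f \prec g + f \succ g + f \odot g,
\end{equation*}
where $f\prec g = \sum_{j \le k-2} \P_j f \, \P_k g$, $f\succ g = \sum_{j \ge k+2} \P_j f \, \P_k g$, and the resonant part $f\odot g = \sum_{|j-k|\le 1} \P_j f \, \P_k g$. The key Fourier-support facts are that each summand of $f\prec g$ with fixed $k$ has Fourier support in an annulus of size $2^k$, whereas the summands of $f\odot g$ with fixed $k$ have Fourier support in the ball of radius $\lesssim 2^k$ (not an annulus). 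Combined with Bernstein-type bounds $\|\P_k f\|_{L^\infty} \lesssim 2^{-k s_1}\|f\|_{\CC^{s_1}}$ and $\|\P_k g\|_{L^\infty} \lesssim 2^{-k s_2}\|g\|_{\CC^{s_2}}$ (valid also for $s_1 < 0$), this is the standard Bony machinery.

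For the paraproducts $f \prec g$ and $f \succ g$, to estimate $\|\P_\ell(f\prec g)\|_{L^\infty}$ only finitely many terms (with $k \sim \ell$) contribute, and I get $2^{-\ell s_1}\bigl(\sum_{j \le \ell-2} 2^{-j s_1}\bigr) 2^{-\ell s_2}\|f\|_{\CC^{s_1}}\|g\|_{\CC^{s_2}}$. Since $s_1 < 0$, the inner geometric series is summable and yields a factor bounded by $2^{-\ell s_1}$ up to constants, giving $\|\P_\ell(f\prec g)\|_{L^\infty} \lesssim 2^{-\ell s_1} \|f\|_{\CC^{s_1}}\|g\|_{\CC^{s_2}}$ (I only need the $\CC^{s_1}$ bound on the output, which has worse regularity than $\CC^{s_1 + s_2}$, so this easily closes). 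The $f \succ g$ piece is handled symmetrically. For the resonant piece, $\|\P_\ell(f \odot g)\|_{L^\infty}$ only picks up terms with $k \ge \ell - O(1)$, and I estimate
\begin{equation*}
\|\P_\ell(f\odot g)\|_{L^\infty} \lesssim \sum_{k \ge \ell - C} 2^{-k(s_1 + s_2)} \|f\|_{\CC^{s_1}}\|g\|_{\CC^{s_2}} \lesssim 2^{-\ell(s_1 + s_2)}\|f\|_{\CC^{s_1}}\|g\|_{\CC^{s_2}},
\end{equation*}
where convergence of the sum uses precisely the hypothesis $s_1 + s_2 > 0$. Since $s_1 + s_2 > s_1$, this bound is stronger than the required $\CC^{s_1}$ bound. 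Summing the three pieces and multiplying by $2^{\ell s_1}$ gives the desired estimate on $\R^2$.

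Finally, for the transfer to $\T^2_L$ with $L$-uniform constant, the setup in \eqref{Poi1a}--\eqref{Poi3} defines the periodic Littlewood-Paley blocks via Poisson summation of the Euclidean ones, so $\P_k^L f(x) = \P_k f(x)$ for every $2\pi L$-periodic $f$, and the identity \eqref{B3} gives $\|f\|_{\CC^{s}(\T^2_L)} = \|f\|_{\CC^{s}(\R^2)}$. Viewing $f, g$ on $\T^2_L$ as $2\pi L$-periodic distributions on $\R^2$ and applying the Euclidean bound proved above yields the periodic estimate with a constant depending only on $s_1, s_2$, not on $L$. The main obstacle, and the only nontrivial point beyond the classical Bony estimate, is ensuring that the paraproduct decomposition commutes with the periodization: this is immediate here because our periodic projectors were designed via Poisson summation precisely so that they agree with the Euclidean ones on periodic functions, so no $L$-dependence can enter.
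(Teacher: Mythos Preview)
Your argument is correct and is exactly the standard Bony paraproduct proof that the paper defers to by citing \cite{Bony}, \cite[Lemma~2.1]{GIP}, and \cite[Section~3]{MW2}; the paper gives no proof of its own beyond those references. Your additional step---transferring the $\R^2$ estimate to $\T^2_L$ with an $L$-independent constant via the norm identity \eqref{B3}---is valid and is indeed the reason the paper's Littlewood-Paley projectors were set up through Poisson summation in the first place. (One cosmetic slip: in your paraproduct bound the displayed expression has a stray leading factor $2^{-\ell s_1}$; the correct intermediate estimate is $\bigl(\sum_{j \le \ell-2} 2^{-j s_1}\bigr) 2^{-\ell s_2} \lesssim 2^{-\ell(s_1+s_2)}$, which is in any case stronger than the $2^{-\ell s_1}$ you need.)
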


Lastly, we state the following elementary lemma.

\begin{lemma}\label{LEM:elem}
Let  $ s_1, s_2 < 1$. Then, 
for any $t  > 0$, 
we have \begin{align*}
\int_{0}^t (t-t')^{-s_1} (t')^{-s_2} dt' 
=  B(1-s_1, 1-s_2) \, t^{1-s_1 - s_2}.
\end{align*}
Here,  $B(z_1, z_2)$ is the beta function at $(z_1, z_2)$
which is finite for $\Re z_1, \Re z_2 > 0$.\qed
\end{lemma}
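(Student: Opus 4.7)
The statement is the standard scaling identity for the Beta function, so the plan is short and essentially computational. I would prove it by the substitution $t' = t u$, which turns the integral on $[0, t]$ into an integral on $[0, 1]$ and factors out the expected power of $t$.

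More precisely, first I would note that since $s_1, s_2 < 1$, the integrand $(t - t')^{-s_1} (t')^{-s_2}$ is integrable near both endpoints $t' = 0$ and $t' = t$ (the singularities are of order strictly less than $1$), so the integral is well-defined and finite for every $t > 0$. Then, setting $t' = t u$ with $dt' = t \, du$, I compute
\begin{align*}
\int_0^t (t - t')^{-s_1} (t')^{-s_2} \, dt'
& = \int_0^1 \big(t(1-u)\big)^{-s_1} (tu)^{-s_2} \, t \, du \\
& = t^{1 - s_1 - s_2} \int_0^1 (1 - u)^{-s_1} u^{-s_2} \, du.
\end{align*}

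The remaining integral is precisely the definition of the Beta function $B(1 - s_2, 1 - s_1) = B(1 - s_1, 1 - s_2)$, which is finite for $\Re(1 - s_1), \Re(1 - s_2) > 0$, i.e.\ for $s_1, s_2 < 1$ (in particular for the real case considered here). This gives the claimed identity. There is no real obstacle; the only minor point to keep in mind is the justification of the change of variables, but this is immediate from the integrability observation above.
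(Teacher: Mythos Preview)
Your argument is correct: the substitution $t' = tu$ is the standard way to reduce this integral to the Euler integral defining the Beta function, and your integrability remark at the endpoints justifies the computation. The paper itself does not give a proof of this lemma (it is stated with a terminal \qed\ as an elementary fact), so your proposal simply supplies the routine verification the authors omit.
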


\subsection{Tools from stochastic analysis}
%\label{SEC:proba}

In this subsection,  
our main goal is to establish 
the following two propositions on 
regularity properties 
of the 
truncated stochastic convolution 
 $\Psi_{L, N} = \Pii_N \Psi_L$, 
 where 
 $\Psi_L$ 
 and 
 $\Pii_N$ 
are as in  
 \eqref{Psi0} and 
  \eqref{chi}, respectively, 
 and 
the truncated  imaginary Gaussian multiplicative chaos
$\Ta_{L, N}$
defined in \eqref{igmc}
 for $L \ge 1$ and $N \in \N$.
% see Propositions \ref{PROP:psi}
% and \ref{PROP:igmc}.

 \begin{proposition}\label{PROP:psi}
Let $L\ge 1$. Fix $T>0$, $\dl > 0$,  and $1 \le p < \infty$. %Then the following holds:
\begin{enumerate}
\item The sequence
$ \{\Psi_{L, N} \}_{N \in \N}$ is a Cauchy sequence in $L^p(\O;C([0,T]; \CC^{-\dl}(\T_L^2)))$.
Moreover, we have 
\begin{align}
\sup_{N \in \N\cup\{\infty\}}  \E\Big[ \|\Psi_{L, N}\|_{C_T \CC_x^{-\dl }(\T^2_L)}^p \Big] 
\les_{T, p, \dl} L^2,
\label{GX0a}
\end{align}
where the implicit constant is independent of $L \ge 1$. Here, $\Psi_{L, \infty}$ denotes the limit of $\Psi_{L, N}$
and is equal to $\Psi_L$ defined in \eqref{Psi0}.

\item Let $B \subset \R^2$ be a disk of radius at least $1$ and let $\chi_B$ be a smooth function supported on B such that $\|\chi_B\|_{W^{1,\infty}(\R^2)} \les 1$. The sequence
$ \{\chi_B \Psi_{L, N} \}_{N \in \N}$ is a Cauchy sequence in $L^p(\O;C([0,T]; \CC^{-\dl}(\R^2)))$. 
Moreover, we have 
\begin{align}
\sup_{L \ge 1} \sup_{N \in \N\cup\{\infty\}}  \E\Big[ \| \chi_B \Psi_{L, N}\|_{C_T \CC_x^{-\dl}(\R^2)}^p \Big] 
\les_{T, p, \dl} \min(|B|,L^2),
\label{GX0b}
\end{align}
where the  implicit constant is independent of $B$ and $L$.
\end{enumerate}
\end{proposition}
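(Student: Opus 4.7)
The plan is to reduce both parts to (a) a pointwise variance bound for the Littlewood-Paley blocks of $\Psi_{L,N}$, (b) Gaussian hypercontractivity, (c) the Besov embedding $B^{-\delta/2}_{r,r} \hookrightarrow \mathcal{C}^{-\delta}$ for $r \gg 1$, and (d) Kolmogorov's continuity criterion in time. A direct Fourier computation starting from the stationary solution of~\eqref{Psi0} gives $\E|\widehat{\Psi_{L,N}}(t,n)|^2 = L^2\chi_N^2(n)/(1+|n|^2)$, where the factor $L^2$ arises from the Fourier normalization~\eqref{FT1} on $\T^2_L$. Combining this with the frequency localization of $\mathbf{P}^L_k$ on $|n|\sim 2^k$ and the Riemann-sum comparison $\frac{1}{L^2}\sum_{n\in\Z^2_L} f(n) \approx \int_{\R^2} f(\xi)\,d\xi$ (valid uniformly in $L \ge 1$ on nice integrands), the $L^2$ prefactors cancel and one obtains the key pointwise variance bound
\begin{equation*}
\sup_{L,\,N,\,t,\,x,\,k}\,\E\bigl|\mathbf{P}^L_k\Psi_{L,N}(t,x)\bigr|^2 \les 1.
\end{equation*}

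For part (i), fix $p \ge 1$ and choose $r = \max(p, r_0)$ with $r_0 \ge 4/\delta$, so that the Besov embedding is available on $\T^2_L$ with $L$-uniform constants. Gaussian hypercontractivity promotes the variance bound to $\E|\mathbf{P}^L_k\Psi_{L,N}(t,x)|^r \les_r 1$; integrating over $\T^2_L$ then supplies the volume factor $L^2$, giving $\E\|\mathbf{P}^L_k\Psi_{L,N}(t)\|_{L^r(\T^2_L)}^r \les_r L^2$. Summing the dyadic decomposition and applying Jensen's inequality (concavity of $x \mapsto x^{p/r}$ for $p \le r$) yields
\begin{equation*}
\E\|\Psi_{L,N}(t)\|_{\mathcal{C}^{-\delta}(\T^2_L)}^p \les \Bigl(\sum_{k\ge 0} 2^{-k r \delta/2}\,\E\|\mathbf{P}^L_k\Psi_{L,N}(t)\|_{L^r(\T^2_L)}^r\Bigr)^{p/r} \les L^{2p/r} \le L^2,
\end{equation*}
where the last step uses $r \ge p$ and $L \ge 1$. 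The upgrade to $C_T\mathcal{C}^{-\delta}$ is then a standard Kolmogorov continuity argument: the time increment $\Psi_{L,N}(t) - \Psi_{L,N}(t')$ has Fourier-side variance of order $L^2 |t-t'|^{2\alpha}(1+|n|^2)^{-1+2\alpha}$ for small $\alpha > 0$, which feeds back into the same scheme and produces H\"older-in-time bounds. The Cauchy property in $N$ follows verbatim applied to $\Psi_{L,N_1} - \Psi_{L,N_2}$: since $\chi_{N_1} - \chi_{N_2}$ vanishes on $\{|n| \le \tfrac{1}{2}\min(N_1,N_2)\}$, one extracts a smallness factor $\min(N_1,N_2)^{-\eta}$ for some $\eta > 0$.

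For part (ii), split into two regimes. When $|B| \le L^2$, the scheme from (i) runs on $\R^2$ instead of $\T^2_L$: since $\chi_B$ is supported on $B$ with $\|\chi_B\|_{W^{1,\infty}} \les 1$, the spatial integration of the hypercontractive pointwise bound is effectively restricted to a neighborhood of $B$, so that $\E\|\mathbf{P}_k(\chi_B\Psi_{L,N})(t)\|_{L^r(\R^2)}^r \les_r |B|$ replaces the torus version. When $|B| \ge L^2$, instead combine Bony's product estimate (Lemma~\ref{LEM:prod}, with $s_1 = -\delta$ and $s_2 \in (\delta, 1)$) with the periodicity identity~\eqref{B3}:
\begin{equation*}
\|\chi_B\Psi_{L,N}\|_{\mathcal{C}^{-\delta}(\R^2)} \les \|\chi_B\|_{\mathcal{C}^{s_2}(\R^2)}\|\Psi_{L,N}\|_{\mathcal{C}^{-\delta}(\R^2)} \les \|\chi_B\|_{W^{1,\infty}}\|\Psi_{L,N}\|_{\mathcal{C}^{-\delta}(\T^2_L)},
\end{equation*}
so part (i) furnishes the $L^2$ scaling directly. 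Taking the minimum of the two bounds produces $\min(|B|, L^2)$. The main obstacle will be the regime $|B| \le L^2$: the Littlewood-Paley kernels $\Phi_k$ are not compactly supported, so their spatial tails must be tracked carefully to justify the effective $B$-localization of $\mathbf{P}_k(\chi_B\Psi_{L,N})$ uniformly in $k \ge 0$ and $L \ge 1$. The Cauchy property in (ii) follows by the same scheme applied to differences, combined with the smallness factor from~(i).
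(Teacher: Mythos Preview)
Your approach is correct and takes a genuinely different, more direct route than the paper. You run the standard moment method: uniform pointwise variance bound on Littlewood--Paley blocks, Gaussian hypercontractivity, the embedding $B^{-\delta/2}_{r,r}\hookrightarrow\mathcal{C}^{-\delta}$ for $r>4/\delta$, and Kolmogorov in time; the factor $L^2$ (or $|B|$) enters simply as the volume of the spatial $L^r$-integration. This is essentially the Mourrat--Weber argument the paper cites as an analogue. The paper instead covers $[-\pi L,\pi L)^2$ by $O(L^2)$ unit balls, introduces an auxiliary norm $W^{-\delta,\infty,A}$ built from a compactly supported truncation $\varphi_A J_\delta$ of the Bessel kernel (so that spatial localization becomes exact rather than tail-controlled), decomposes the periodic heat kernel into a compactly supported singular piece $K^1_L$ and a smooth remainder $K^2_L$, and exploits that the contribution $\Psi^1_{L,N}$, once localized to a unit ball, has the same law as on a fixed small torus $\mathbb{T}^2_{L_0}$ with $L_0\sim 1$; the factor $L^2$ then appears via a union bound over the covering, absorbed by Chebyshev with a high auxiliary moment $q=10p$.

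Your method is shorter for this Gaussian proposition and actually yields the sharper bound $L^{2p/r}\le L^2$. The paper's more elaborate localization machinery, however, is designed to be reused verbatim for the imaginary Gaussian multiplicative chaos $\Theta_{L,N}$ in Proposition~\ref{PROP:igmc}: since $\Theta_{L,N}$ is not Gaussian, your hypercontractivity step is unavailable there, whereas the law-identity-on-a-small-torus reduction transfers immediately. The Littlewood--Paley tail issue you flag in part~(ii) for $|B|\le L^2$ is real but routine; the paper's truncated-Bessel-kernel device (Lemma~\ref{LEM:equiv} and Remark~\ref{RMK:equiv}) is precisely one clean way to dispose of it.
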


 \begin{proposition}\label{PROP:igmc}
Let $L\ge 1$ and $0 < \be^2 < 4\pi$. Fix $T>0$, $\al > \frac{\be^2}{4\pi}$, and $1 \le p < \infty$. 
\begin{enumerate}
\item The sequence
$\{ \Theta_{L, N} \}_{N\in \N}$ is a Cauchy sequence  in $L^p(\O;C([0,T]; \CC^{-\al}(\T_L^2)))$.
Moreover, we have 
\begin{align}
\sup_{N \in \N\cup \{\infty\}}\E\Big[ \| \Theta_{L, N}\|_{C_T \CC_x^{-\al }(\T^2_L)}^p \Big] 
\les_{T, p, \al} L^{2},
\label{YY1}
\end{align}
where the implicit constant is independent of $L \ge 1$.
Here, $\Theta_{L, \infty}$ denotes the limit of $\Theta_{L, N}$.
\item Let $B \subset \R^2$ be a disk of radius at least $1$ and let $\chi_B$ be a smooth function supported on B such that $\|\chi_B\|_{W^{1,\infty}(\R^2)} \les 1$. The sequence
$ \{\chi_B \Theta_{L, N} \}_{N \in \N}$ is a Cauchy sequence in $L^p(\O;C([0,T]; \CC^{-\al}(\R^2)))$.
Moreover, we have 
\begin{align}
\sup_{L \ge 1} \sup_{N \in \N\cup\{\infty\}}  \E\Big[ \| \chi_B \Theta_{L, N}\|_{C_T \CC_x^{-\al}(\R^2)}^p \Big] 
\les_{T, p, \al} \min(|B|,L^2),
\label{YY2}
\end{align}
where the implicit constant is independent of $B$ and $L$.
\end{enumerate}
\end{proposition}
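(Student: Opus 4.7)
The plan is to prove both parts by establishing, for every integer $p \geq 1$, uniform-in-$N$ moment and increment bounds on Littlewood-Paley blocks $\P_k^L \Ta_{L,N}$, then combining them with Besov embedding $B^{-\al'}_{p,p} \hra \CC^{-\al}$ (for some $\al' \in (\be^2/(4\pi), \al)$ and $p$ large) and a standard Kolmogorov--Chentsov continuity argument in $t \in [0,T]$ to obtain \eqref{YY1} and its Cauchy counterpart. The central identity is that, since $\Psi_{L,N}$ is a centered stationary Gaussian field, the Wick prefactor $\g_{L,N}$ from \eqref{CN}--\eqref{igmc} cancels against the diagonal variance in the Gaussian moment generating function, leaving
\begin{align*}
\E\big[\Ta_{L,N}(t,x)\,\overline{\Ta_{L,N}(s,y)}\big] = \exp\!\big(\be^2 K_{L,N}(t-s,\, x-y)\big),
\end{align*}
where $K_{L,N}$ denotes the parabolic covariance of $\Psi_{L,N}$. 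A Poisson-summation argument via \eqref{Poi0}, applied to the full-plane heat Green function, together with \eqref{sN}, yields the $L$-uniform bound $K_{L,N}(\tau, z) \le -\tfrac{1}{2\pi}\log(|z|_L + |\tau|^{1/2} + N^{-1}) + O(1)$. Plugging this into $\E[|\P_k^L \Ta_{L,N}(t,x)|^2]$, viewed as an integral of $e^{\be^2 K_{L,N}}$ against two Littlewood-Paley bumps concentrated at scale $2^{-k}$, and using the local integrability of $r^{-\be^2/(2\pi)}$ on $\R^2$ (valid precisely when $\be^2<4\pi$), gives $\les 2^{k\be^2/(2\pi)}$ uniformly in $L$ and $N$.

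The $2p$-th moment $\E[|\P_k^L\Ta_{L,N}(t,x)|^{2p}]$ is treated analogously: after the Wick cancellations, it reduces to a Coulomb-gas-type integral of $\prod_{i<j} e^{\pm\be^2 K_{L,N}(\cdot_i,\cdot_j)}$ against a $2p$-fold product of bumps, with signs determined by the pairing of $\Ta$- and $\overline{\Ta}$-factors. The bound $\les C^p\, 2^{kp\be^2/(2\pi)}$ at the first threshold $\be^2<4\pi$ is the standard Hairer--Shen computation (cf.~\cite[Theorem~3.5]{HS}). Integrating the pointwise bound over $\T_L^2$ via stationarity produces the volume factor $L^2$ in~\eqref{YY1}. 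The Cauchy differences $\Ta_{L,N}-\Ta_{L,M}$ (for $M \le N$) and the time increments are handled through the elementary estimate
\begin{align*}
\E\big[|e^{i\be X}-e^{i\be Y}|^2\big] = 2\bigl(1 - e^{-\tfrac{\be^2}{2}\E[(X-Y)^2]}\bigr) \les_{\theta} \bigl(\E[(X-Y)^2]\bigr)^\theta
\end{align*}
for any $\theta \in (0,1]$ and jointly centered Gaussian $X, Y$, combined with the H\"older continuity of $K_{L,N}$ in time and with the Cauchy property of $\Psi_{L,N}$ from Proposition~\ref{PROP:psi}. A Kolmogorov--Chentsov argument upgrades the resulting $L^{2p}$-bounds to continuity in $t$ with values in $\CC^{-\al}(\T_L^2)$.

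For Part~(ii), the analysis splits according to the relative size of $|B|$ and $L^2$. When $|B|\le L^2$, I would write $\P_k(\chi_B \Ta_{L,N}) = \chi_B\,\P_k\Ta_{L,N} + [\P_k,\chi_B]\Ta_{L,N}$, identify $\P_k\Ta_{L,N} = \P_k^L\Ta_{L,N}$ on $\R^2$ via \eqref{Poi3}, and integrate the pointwise moment bound against $|\chi_B|^p\les\ind_B$ to get $\les|B|\cdot 2^{kp\be^2/(4\pi)}$; the commutator contribution is controlled by $\les 2^{-k}\|\nb\chi_B\|_{L^\infty}$ times the same, hence sub-dominant. When $|B|>L^2$, I would instead invoke the product estimate of Lemma~\ref{LEM:prod} to reduce to the periodic bound:
\begin{align*}
\|\chi_B\Ta_{L,N}\|_{\CC^{-\al}(\R^2)} \les \|\chi_B\|_{\CC^{\al+\eps}(\R^2)}\,\|\Ta_{L,N}\|_{\CC^{-\al}(\R^2)} \les \|\Ta_{L,N}\|_{\CC^{-\al}(\T_L^2)},
\end{align*}
using $\|\chi_B\|_{\CC^{\al+\eps}(\R^2)}\les 1$ from the smoothness of $\chi_B$ and \eqref{B3}; the $p$-th moment of the right-hand side is then bounded by $L^2$ via~\eqref{YY1}. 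Combining the two regimes yields $\min(|B|, L^2)$.

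The main obstacle in the whole scheme is the Coulomb-gas combinatorics in the $2p$-th moment estimate---this is what forces the $\be^2 < 4\pi$ threshold and is the genuinely nontrivial input---together with the careful tracking of $L$-uniformity, which is secured by the Poisson-summation Littlewood-Paley setup of Section~\ref{SEC:N} and the $L$-uniform logarithmic bound~\eqref{sN}.
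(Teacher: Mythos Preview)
Your approach is valid but takes a genuinely different route from the paper's. You propose a \emph{direct} moment computation: bound $\E[|\P_k^L\Ta_{L,N}(t,x)|^{2p}]$ pointwise via the Coulomb-gas integral (citing \cite{HS}), check that the covariance bound $K_{L,N}(\tau,z)\le -\tfrac1{2\pi}\log(|z|_L+|\tau|^{1/2}+N^{-1})+O(1)$ holds uniformly in $L$ by Poisson summation, integrate over $\T_L^2$ to produce the volume factor $L^2$, and then pass through the $L$-uniform Besov embedding $B^{-\al'}_{2p,2p}\hra\CC^{-\al}$ and Kolmogorov--Chentsov. The paper instead uses a \emph{localization/covering} argument: it decomposes the heat kernel as $K_L=K_L^1+K_L^2$ with $K_L^1$ spatially compactly supported and $K_L^2$ smooth, observes that the localized object $\phi_{B'}\,\g_{L,N}e^{i\be\Psi^1_{L,N}}$ depends only on the noise in a box of size $O(1)$ and therefore has the same law as the corresponding object on a torus $\T_{L_0}^2$ with $L_0\sim1$, and then black-boxes the $L\lesssim1$ case from \cite{HS,ORSW}. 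The $L^2$ factor arises from a union bound over $O(L^2)$ unit disks covering $\T_L^2$, not from integrating a stationary pointwise moment.

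What each approach buys: the paper's reduction is modular---it avoids revisiting the Coulomb-gas combinatorics and instead reduces everything to the fixed-torus literature, at the cost of introducing the auxiliary kernel splitting and the truncated-Bessel norm $W^{-\al,p,A}$ (Lemma~\ref{LEM:equiv}). Your approach is more self-contained and arguably more transparent about where the $L^2$ comes from, but it does require checking $L$-uniformity of the Hairer--Shen moment bound (which you correctly identify as the main analytic input and which does go through, since the near-diagonal behavior of $K_{L,N}$ is $L$-independent by the periodization of the exponentially decaying full-plane Green function). One minor slip: in Part~(ii) you write $2^{kp\be^2/(4\pi)}$ for the $2p$-th moment where earlier you had $2^{kp\be^2/(2\pi)}$; the latter is correct, though either exponent leads to the same summability condition $\al'>\be^2/(4\pi)$.
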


Proposition \ref{PROP:psi}\,(ii)
and Proposition \ref{PROP:igmc}\,(ii)
with a spatial localization by $\chi_B$
play an important role
in proving Theorem \ref{THM:main2}
in the infinite volume setting.
We note that, 
as for Proposition \ref{PROP:psi}
on 
the truncated stochastic convolution $\Psi_{L,N}$, 
Mourrat and Weber presented an analogous result 
(but in a slightly different norm);
see \cite[Theorem 5.1]{MW2}.
On the other hand, 
 Proposition \ref{PROP:igmc} 
on the  truncated  imaginary Gaussian multiplicative chaos
$\Ta_{L, N}$
 extends 
 the results in 
 \cite{HS, ORSW} to the current large torus setting,
 which requires some care
 in quantifying the dependence on $L$ and $|B|$.
For the sake of completeness, 
 we present proofs of Propositions~\ref{PROP:psi} and~\ref{PROP:igmc}
 in the following. 

%While these results follow from adapting the argument in \cite{MW2, HS, ORSW}, for the sake of completeness, we present details of the proofs of Propositions \ref{PROP:psi} and \ref{PROP:igmc} in the following.

We now introduce the setup for our argument. Our first goal is to introduce a suitable spatial truncation of the standard heat kernel. Let $K$ be the heat kernel on $\R_+ \times \R^2$, namely,
\begin{align*}
K(t,x) = \frac{1}{4\pi t} \exp\Big(- \frac{|x|_{\R^2}^2}{4t}\Big)
\end{align*}
for $(t,x) \in \R_+ \times \R^2$ so that with \eqref{FT4} we have
\[\F_{\R^2}\big(K(t,\cdot) * f\big)(\xi) = e^{-t|\xi|^2} \F_{\R^2}(f)(\xi)\]
for $(t,\xi) \in \R_+ \times \R^2$.

Let $L \ge 1$. The periodic heat kernel $K_L$ on $\R_+ \times \T^2_L$ is defined via the Fourier series
\[K_L(t,x) :=\frac{1}{4\pi^2 L^2} \sum_{n\in\Z_L^2}e^{-t |n|^2} e^{i n \cdot x} \]
for $(t,x) \in \R_+ \times \T^2_L$. By the Poisson formula \eqref{Poi0}, we have
\begin{align*}
K_L(t,x) = \sum_{m \in \Z^2} K(t, x + 2\pi m L)
\end{align*}
for $(t,x) \in \R_+ \times \T_L^2$.

Let $\phi \in C_c^{\infty}(\R^2; [0,1])$ be a smooth bump function such that $\phi \equiv 1$ on the disk $B(0,\frac12)$ and define the kernels $K^1_L$ and $K^2_L$ by
\begin{equ}
K^1_L(t,x)  := \sum_{m \in \Z^2} \big(\phi K\big)(t, x + 2\pi  m L)\;, \qquad
K^2_L(t,x)  := \sum_{m \in \Z^2} \big((1-\phi) K\big)(t, x + 2\pi m L)
\end{equ}
for $(t,x) \in \R_+ \times \T^2_L$ and such that $K_L = K^1_L + K^2_L$. On the one hand, we note that $(1-\phi) K$ is smooth at the origin as a space-time function. In particular, we have
\begin{align}
\big| \partial_x^\g  \big( (1 - \phi) K\big)(t,x) \big| \les_\g \frac{1}{\jb t^{1+ \frac {|\g|} 2}} \exp\Big(-\frac{|x|^2_{\R^2}}{10t}\Big)
\label{KerY2}
\end{align}
for $(t,x) \in \R_+ \times \R^2$ and for any multi-index $\g$. By \eqref{Poi1},~\eqref{KerY2}, and integration by parts, we then deduce the bound
\begin{align}
| \F_{\T^2_L}(K^2_L)(t,n)| = | \F_{\R^2}\big( (1- \phi)K\big)(t, n) | \les \jb n^{-100}, \quad (t,n) \in \R_+ \times \Z^2_L,
\label{KerY3}
\end{align}
which is good enough for our purposes. On the other hand, $K^1(t, \cdot)$, viewed as a function on $\T^2_L$, is supported on the disk $B(0,\frac12)$.

Now, we let $\Psi^{1}_{L,N}$ and $\Psi^2_{L,N}$ be the linear stochastic objects associated with the kernels $K^1_L$ and $K^2_L$ respectively, namely,
\begin{align}
\Psi^j_{L,N} (t,x) := \jb{\xi_L, \ind_{[0,t]} e^{-(t - \cdot)} (\widecheck{\chi_N} *_L K^j_L)(t- \cdot, x- \cdot)}
\label{KerY4}
\end{align}
for $(t,x) \in \R_+ \cdot \T^2_L$ and $j \in \{1,2\}$. Here, $\jb{\cdot, \cdot}$ denotes the $\D(\R_+ \times \T^2_L)-\D'(\R_+ \times \T^2_L)$ duality pairing and $\widecheck{\chi_N}$ denotes the (spatial) inverse Fourier transform of the function $\chi_N$ defined below \eqref{chi}. By construction, we therefore have that
\begin{align}
\Psi_{L,N} = \Psi^1_{L,N} + \Psi^2_{L,N}.
\label{KerY5}
\end{align}

We now go over basic properties of the  Bessel potentials 
on $\R^2$.
Given $\al>0$, 
 we denote by $\jb{\nabla_{\R^2}}^{-\al} = (1-\Dl_{\R^2})^{-\frac{\al}2}$
 the Bessel potential of order $\al$ on $\R^2$. It follows from \cite[Proposition 1.2.5]{Gra2}
 that for $0 < \al < 2$, 
 the convolution kernel $J_\al$ of 
 the Bessel potential $\jb{\nabla_{\R^2}}^{-\al}$ is a smooth function on $\R^2\setminus\{0\}$
that decays exponentially as $|x|\to \infty$.
 Moreover, 
we have 
 \begin{align*}
 J_\al(x) = c_{\al} |x|^{\al - 2} + o(1)
% \label{Bessel0a}
 \end{align*}
  as $x \to 0$; 
 see (4,2) in \cite{AS}. In particular, there exists a constant $c>0$ such that the following bound holds:
 \begin{align*}
 |J_\al(x)| \les 
 \begin{cases} |x|^{\al - 2},   & \text{for $0 < |x| \le 1$},  \\
 e^{-c|x|},   & \text{for $|x| >1$}.
 \end{cases}
% \label{Bessel0}
 \end{align*}
Furthermore by (the proof of) \cite[Proposition 1.2.5]{Gra2}, we have
\begin{align}
|\partial_x^{\g} J_{\al}(x)| \les_{\g} e^{-c|x|}, \quad \text{for $|x| >1$},
\label{YYY1}
\end{align}
for any multi-index $\g$.

At last, we introduce an auxiliary norm which is a slight modification of the Sobolev norms considered in Subsection~\ref{SUBSEC:N1}. Fix $0<\al<2$, a parameter $A >0$ and let $\varphi_A \in C_c^{\infty}(\R^2; [0,1])$ be a smooth bump supported on $B(0,A)$ and such that $\varphi_A \equiv 1$ on $B(0,\frac A2)$. Define the norm $\| \cdot \|_{W^{-\al,p, A}(\R^2)}$ on $W^{-\al,p}(\R^2)$ by 
\begin{align*}
\|f\|_{W^{-\al,p, A}(\R^2)} = \| (\varphi_A J_{\al}) * f \|_{L^p(\R^2)}.
%\label{Ynorm}
\end{align*}

\begin{lemma}\label{LEM:equiv}
Fix $0<\al<2$, $1 \le p \le \infty$ and $A \gg 1$. Then the norms $\|\cdot\|_{W^{-\al,p, A}(\R^2)}$ and $\|\cdot\|_{W^{-\al,p}(\R^2)}$ are equivalent.
\end{lemma}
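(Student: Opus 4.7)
The plan is to treat convolution against $\varphi_A J_\al$ as a controlled perturbation of $\jb{\nabla_{\R^2}}^{-\al}$. Setting $h := (1-\varphi_A) J_\al$, one has the decomposition $J_\al = \varphi_A J_\al + h$, so the desired equivalence of norms reduces to proving
\[
\| h * f \|_{L^p(\R^2)} \le \theta \, \| J_\al * f \|_{L^p(\R^2)}
\]
with some constant $\theta = \theta(A) < 1$ once $A$ is taken sufficiently large. Indeed, the triangle inequality then immediately gives $\|(\varphi_A J_\al)*f\|_{L^p(\R^2)} \le (1+\theta) \|J_\al * f\|_{L^p(\R^2)}$, while a standard absorption argument yields the reverse bound $\|J_\al * f\|_{L^p(\R^2)} \le (1-\theta)^{-1} \|(\varphi_A J_\al) * f\|_{L^p(\R^2)}$.

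The first step is to observe that, since $1 - \varphi_A$ vanishes identically on $B(0, A/2)$, the function $h$ extends smoothly across the origin and, by \eqref{YYY1} applied on $\{|x| \ge A/2\}$, both $h$ and all its derivatives decay exponentially. Using the elementary identity $f = \jb{\nabla_{\R^2}}^{\al}(J_\al * f)$ (up to a fixed Fourier-convention constant which I absorb into later estimates), I would rewrite
\[
h * f = g * (J_\al * f) \;, \qquad g := \jb{\nabla_{\R^2}}^{\al} h\;,
\]
so that Young's convolution inequality gives $\|h * f\|_{L^p(\R^2)} \les \|g\|_{L^1(\R^2)} \, \|J_\al * f\|_{L^p(\R^2)}$. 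The entire argument thus reduces to proving that $\|g\|_{L^1(\R^2)} \to 0$ as $A \to \infty$, which is the main obstacle.

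To obtain this decay I would exploit the range $0 < \al < 2$ through the operator factorization $\jb{\nabla_{\R^2}}^{\al} = \jb{\nabla_{\R^2}}^{-(2-\al)} (1 - \Dl_{\R^2})$, so that $g = J_{2-\al} * [(1 - \Dl_{\R^2}) h]$ up to a constant. Since $0 < 2-\al < 2$, the kernel $J_{2-\al}$ satisfies the same qualitative pointwise bounds as $J_\al$ recalled in the excerpt and in particular belongs to $L^1(\R^2)$; Young's inequality then yields $\|g\|_{L^1(\R^2)} \les \|(1 - \Dl_{\R^2}) h\|_{L^1(\R^2)}$. Expanding via the Leibniz rule shows that $(1 - \Dl_{\R^2}) h$ is a finite sum of terms each supported in $\{|x| \ge A/2\}$ and, for the standard choice of $\varphi_A$ as a rescaling of a fixed bump (so that $\|\partial^k \varphi_A\|_{L^\infty(\R^2)} \les 1$ uniformly in $A \ge 1$), each pointwise dominated by $C e^{-c|x|}$ on that region thanks to \eqref{YYY1}. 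Integrating in polar coordinates from $r = A/2$ produces $\|(1 - \Dl_{\R^2}) h\|_{L^1(\R^2)} \les (1 + A)\, e^{-cA/2}$, which falls below any prescribed $\theta < 1$ once $A$ is sufficiently large. The main technical point throughout is this exponential $L^1$ bound; the remaining manipulations are routine.
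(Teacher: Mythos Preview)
Your proof is correct and follows essentially the same strategy as the paper: both show that convolution against the tail $(1-\varphi_A)J_\al$ is a small perturbation by transferring derivatives onto this smooth, exponentially decaying kernel and then absorbing. The only cosmetic difference is that you use the exact factorization $\jb{\nabla}^{\al} = \jb{\nabla}^{-(2-\al)}(1-\Dl)$ to land directly on $\|J_\al * f\|_{L^p}$, whereas the paper shifts a large fixed number of derivatives and absorbs via the embedding $W^{-\al,p}\hookrightarrow W^{-100,p}$.
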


\begin{proof}
By Plancherel's identity and Young's inequality, we bound
\begin{align}
\begin{split}
 \big\| \big( (1- \varphi_A ) J_{\al}) * f \big\|_{L^p(\R^2)} & =  \big\| \big( \jb \nb ^{100} (1- \varphi_A ) J_{\al}) * (\jb \nb ^{-100} f) \big\|_{L^p(\R^2)} \\
 & \les  \|(1- \varphi_A ) J_{\al}\|_{W^{1,1}(\R^2)} \|f\|_{W^{-100,p}(\R^2)}
\end{split}
\label{Ynorm1}
\end{align}
Furthermore, by \eqref{YYY1} and since $\varphi_A \equiv 1$ on $B(0,\frac A2)$, we have that
\begin{align}
\begin{split}
\|(1- \varphi_A ) J_{\al}\|_{W^{1,1}(\R^2)} \les e^{-c_1 A}
\end{split}
\label{Ynorm2}
\end{align}
for some constant $c_1 >0$.

By \eqref{Ynorm1} and \eqref{Ynorm2}, we then have
\begin{align*}
\| f \|_{W^{-\al,p}(\R^2)} & = \| J_\al * f \|_{L^p(\R^2)} \le \| (\varphi_A J_{\al}) * f \|_{L^p(\R^2)} +  \big\| \big( (1- \varphi_A ) J_{\al}\big) * f \big\|_{L^p(\R^2)} \\
& \les \|f \|_{W^{-\al, p, A}(\R^2)} + e^{-c_1 A} \|f\|_{W^{-100,p}(\R^2)},
\end{align*}
which shows that 
\begin{align*}
\| f \|_{W^{-\al,p}(\R^2)} \les \|f \|_{W^{-\al, p, A}(\R^2)}
\end{align*}
upon choosing $A$ large enough. The reverse inequality may be proved in a similar fashion and the claimed result follows.
\end{proof}

\begin{remark}\rm \label{RMK:equiv}
We introduce the norm $\|\cdot\|_{W^{-\al,p,A}}$ for the following practical reason: if $f$ is localized in space then so is $(\varphi_A J_\al) * f$. This will be useful in the proofs of Propositions \ref{PROP:psi} and \ref{PROP:igmc} below.
\end{remark}

We now present a proof of 
 Proposition \ref{PROP:psi}.
 
\begin{proof}[Proof of Proposition \ref{PROP:psi}] Let $L\ge 1$. Fix $T>0$, $\dl > 0$, $1 \le p < \infty$, $B \subset \R^2$ be a disk of radius at least $1$ and let $\chi_B$ be a smooth function supported on $B$ such that $\|\chi_B\|_{W^{1,\infty}(\R^2)} \les 1$. 

Our main goal is to prove the following quantitative estimates in the parameter $L$:
\begin{align}
\sup_{N \in \N}  \E\Big[ \|\Psi_{L, N}\|_{C_T \CC_x^{-\dl }(\T^2_L)}^p \Big] 
& \les_{T, p, \dl} L^{2}, \label{pY1} \\
\sup_{L \ge 1} \sup_{N \in \N}  \E\Big[ \| \chi_B \Psi_{L, N}\|_{C_T \CC_x^{-\dl}(\R^2)}^p \Big] 
& \les_{T, p, \dl} \min(|B|, L^2), \label{pY2}
\end{align}
with implicit constants independent of $L$. The convergence part of the statements (i) and (ii) and the estimates \eqref{GX0a} and \eqref{GX0b} for the limit $\Psi_{L, \infty}$ (i.e. $N = \infty$) follow from \cite{HS,ORSW} (as they are statements which do not depend on $L$ in a quantitative way).

Let $L \les 1$. The bound \eqref{pY1} follows from standard arguments, see for instance \cite{DPD, HS, Zine}. In that case, the estimate \eqref{pY2} is an immediate consequence of \eqref{pY1} and the bound
\begin{align}
\| \chi_B \Psi_{L, N}\|_{C_T \CC_x^{-\dl}(\R^2)} \les \|\chi_B\|_{W^{1,\infty}(\R^2) } \| \Psi_{L, N}\|_{C_T \CC_x^{-\dl}(\T_L^2)} \les \| \Psi_{L, N}\|_{C_T \CC_x^{-\dl}(\T_L^2)},
\label{pY3a}
\end{align}
which follows from Lemma \ref{LEM:prod}, the embedding $W^{1,\infty}(\R^2) \hookrightarrow \CC^s(\R^2)$ for $s <1$ and \eqref{B3}. Hence, we restrict ourselves to the case $L\gg 1$ in what follows. 

We focus on \eqref{pY1}. Fix $A \gg 1$ such that Lemma \ref{LEM:equiv} holds and let $L \gg A$. Let $B_{0} \subset \R^2$ be a disk of size $O(1)$ and $\mathcal B$ be a family of $ O(L^2)$ translates of $B_{0}$ forming a finitely overlapping covering of the box $\wt \T_L := [-\pi L , \pi L)^2 \subset \R^2$.
For each $B' \in \mathcal B$,  let $\phi_{B'} \in C^\infty_c(\R^2; [0, 1])$  
be a smooth function supported on $B'$ such that   $\|\phi_{B'}\|_{W^{1,\infty}} \les 1$, 
and 
$\sum_{B' \in \mathcal B}\phi_{B'}\equiv 1$ on $\wt \T_L$.

Now by \eqref{B3}, the embedding $W^{s, \infty}(\R^2) \hookrightarrow \CC^{s}(\R^2)$ for $s \in \R$ and Lemma \ref{LEM:equiv}, we have
\begin{align}
\|\Psi_{L, N}\|_{C_T \CC_x^{-\dl }(\T^2_L)} \les \|\ind_{\wt \T_L}\Psi_{L,N}\|_{C_TW^{-\dl, \infty,A}_x(\R^2)}.
\label{pY20}
\end{align}
Moreover, by the definition of the $W^{-\dl, \infty, A}$ norm, the fact that the disks in $\mathcal B$ finitely overlap and Remark \ref{RMK:equiv}, we have
\begin{align}
\begin{split}
\|\ind_{\wt \T_L}\Psi_{L,N}\|_{C_TW^{-\dl, \infty,A}_x(\R^2)} & = \Big\| (\varphi_A J_\dl) * \Big(\sum_{B' \in \mathcal B} \phi_{B'} \ind_{\wt \T_L} \Psi_{L,N}\Big)\Big\|_{C_TL_x^{\infty}(\R^2)} \\
& \les_A \big\| \sup_{B' \in \mathcal B} \, (\varphi_A J_\dl) * \big(\phi_{B'} \ind_{\wt \T_L} \Psi_{L,N}\big)\big\|_{C_TL_x^{\infty}(\R^2)} \\
& \les_A \sup_{B' \in \mathcal B} \|\ind_{\wt \T_L} \phi_{B'} \Psi_{L,N}\|_{C_TW_x^{-\dl, \infty, A}(\R^2)} \\
& \les_{A,\dl} \sup_{B' \in \mathcal B} \| \phi_{B'}\Psi_{L,N}\|_{C_T \CC_x^{-\frac \dl 2}(\T_L^2)} ,
\end{split}
\label{pY21}
\end{align}
where we used Lemma \ref{LEM:equiv} and the embedding $\CC^{s+\eps}(\R^2) \hookrightarrow W^{s, \infty}(\R^2)$ for $s\in \R$ and $\eps >0$ together with \eqref{B3} in the last inequality. Therefore, combining \eqref{pY20} and \eqref{pY21} gives
\begin{align}
\|\Psi_{L, N}\|_{C_T \CC_x^{-\dl }(\T^2_L)} \les \sup_{B' \in \mathcal B} \| \phi_{B'}\Psi_{L, N}\|_{C_T \CC_x^{-\frac \dl 2 }(\T_L^2)}.
\label{pY3}
\end{align}

Let $\ld >0$. By the union bound and since $\mathcal B$ has cardinality $\les L^2$, we have that
\begin{align}
\mathbb P \Big(\sup_{B' \in \mathcal B} \| \phi_{B'}\Psi_{L, N}\|_{C_T \CC_x^{-\frac \dl 2 }(\T_L^2)} > \ld \Big) \les L^2  \sup_{B' \in \mathcal B} \mathbb P \Big(\| \phi_{B'}\Psi_{L, N}\|_{C_T \CC_x^{-\frac \dl 2 }(\T_L^2)} > \ld \Big).
\label{pY4}
\end{align}
Besides, by \eqref{KerY4} with \eqref{KerY5}, \eqref{B3}, the embedding $W^{s,\infty}(\R^2) \hookrightarrow \CC^{s}(\R^2)$ for  $s \in \R$ and Lemma \ref{LEM:equiv} we have for any fixed $B' \in \mathcal B$
\begin{align}
\E\Big[ \|\phi_{B'}\Psi_{L, N}\|_{C_T \CC_x^{-\frac \dl 2 }(\T_L^2)}^q\Big] \les_q \E\Big[ \|\phi_{B'}\Psi^1_{L, N}\|_{C_T W^{-\frac \dl 2,\infty,A}_x(\R^2)}^q\Big]  + \E\Big[ \|\phi_{B'}\Psi^2_{L, N}\|_{C_T \CC_x^{-\frac \dl 2 }(\T_L^2)}^q\Big]
\label{pY5}
\end{align}
for any finite $q \ge 1$. Since $\Psi^2_{L,N}$ is constructed from the smooth kernel $K^2_L$, a standard analysis using \eqref{KerY3} and the fact the disk $B'$ has support of size $O(1)$ (see \cite{DPD, HS, MW2}) shows that
\begin{align}
\E\Big[ \|\phi_{B'}\Psi^2_{L, N}\|_{C_T \CC_x^{10}(\T_L^2)}^q\Big] \les_{T,q} 1,
\label{pY6}
\end{align}
with an implicit constant independent of $L$ and $N$.

As for $\Psi^1_{L,N}$, we note that for each fixed $x \in \R^2$, $(\varphi_A J_{\dl}) * (\phi_{B'} \Psi^1_{L,N})(x)$ only involves spatial values of the noise $\xi_L$ on a small box contained in the dilate $cA \cdot B'$, for some $c>0$, of the disk $B'$. More precisely $(\varphi_A J_{\dl}) * (\phi_{B'} \Psi^1_{L,N}) = (\varphi_A J_{\dl}) * (\phi_{B'} \wt\Psi^2_{L,N} )$, with $\wt \Psi^1_{L,N}$ given by
\begin{align}
\wt \Psi^1_{L,N} (t,x) := \jb{\xi_L, \ind_{\wt \T_A} \ind_{[0,t]} e^{-(t - \cdot)} (\widecheck{\chi_N} *_L K^1_L)(t- \cdot, x- \cdot)}
\label{pY6b}
\end{align}
for $(t,x) \in \R_+ \times \T^2_L$, where $\wt \T_{A}$ is a box of side-length $L_0 \sim A$ and which shares the same center as $B'$. By \eqref{noise}, \eqref{KerY4} and \eqref{pY6b}, we observe that $\wt \Psi^1_{L,N}$ and $\Psi^1_{L_0,N}$ have the same law and thus
\begin{align}
\begin{split}
\E\Big[ \|\phi_{B'} \Psi^1_{L, N}\|_{C_T W^{-\frac \dl 2,\infty,A}_x(\R^2)}^q\Big] & = \E\Big[ \|\phi_{B'} \wt \Psi^1_{L, N}\|_{C_T W^{-\frac \dl 2,\infty,A}_x(\R^2)}^q\Big]\\
& = \E\Big[ \|\phi_{B'} \Psi^1_{L_0, N}\|_{C_T W^{-\frac \dl 2,\infty,A}_x(\R^2)}^q\Big] \\
& \sim \E\Big[ \|\phi_{B'} \Psi^1_{L_0, N}\|_{C_T W^{-\frac \dl 2,\infty}_x(\R^2)}^q\Big] \\
&  \les \E\Big[ \|\phi_{B'} \Psi^1_{L_0, N}\|_{C_T \CC^{-\frac \dl 4,\infty}_x(\R^2)}^q\Big]\\
& \les \E\Big[ \|\phi_{B'} \Psi^1_{L_0, N}\|_{C_T \CC^{-\frac \dl 4,\infty}_x(\T_{L_0}^2)}^q\Big],
\end{split}
\label{pY7}
\end{align}
where we used Lemma \ref{LEM:equiv}, the embedding $\CC^{s+\eps}(\R^2) \hookrightarrow W^{s,\infty}(\R^2)$, for $s \in \R$ and $\eps >0$ and \eqref{B3} in the last three steps, respectively. By \eqref{pY2} for $L_0 \les_A 1$ together with \eqref{pY6} (for $L_0$) and \eqref{KerY5} and since $L_0 \sim A$, we then have
\begin{align}
\E\Big[ \|\phi_{B'} \Psi^1_{L_0, N}\|_{C_T \CC^{-\frac \dl 4,\infty}_x(\T_{L_0}^2)}^q\Big] \les_{T,q,\dl, A} 1.
\label{pY8}
\end{align}

Thus, by combining \eqref{pY4}, \eqref{pY5}, \eqref{pY6}, \eqref{pY7} and \eqref{pY8} together with Chebyshev's inequality, we get that
\begin{align}
\begin{split}
\E \Big[\sup_{B' \in \mathcal B} \| \phi_{B'}\Psi_{L, N}\|_{C_T \CC_x^{-\frac \dl 2}(\T_L^2)}^p\Big] & = \int_{0}^{\infty} \mathbb P \Big(\sup_{B' \in \mathcal B} \| \phi_{B'}\Psi_{L, N}\|_{C_T \CC_x^{-\frac \dl 2 }(\T_L^2)} > \ld^{\frac1p}\Big) d \ld \\
& \les_{T,q, \dl} L^2 \int_0^{\infty} \ld^{-\frac q p} d \ld \\
&  \les_{T,q, \dl} L^2,
\label{pY9}
\end{split}
\end{align}
upon choosing $q = 10p$, say. The bound \eqref{pY1} for $L \gg 1$ thus follows from \eqref{pY3} and \eqref{pY9}.

The estimate \eqref{pY2} for $L\gg1$ follows from a similar covering argument and \eqref{pY1} together with \eqref{pY3a}. We omit details.
\end{proof}

We conclude this section by presenting 
a proof of
Proposition \ref{PROP:igmc}.

\begin{proof}[Proof of Proposition~\ref{PROP:igmc}]Let $L\ge 1$ and $0 < \be^2 < 4\pi$. Fix $T>0$, $\al > \frac{\be^2}{4\pi}$, and $1 \le p < \infty$.

As in the proof of Proposition \ref{PROP:psi}, we may reduce the bounds \eqref{YY1} and \eqref{YY2} to the case $L \les 1$ via a covering argument. Let us sketch the modifications to make to the proof of \eqref{pY1} to obtain \eqref{YY1}. In particular, as in \eqref{pY5}-\eqref{pY9}, it suffices to prove
\begin{align}
\E\Big[ \|\phi_{B'}\Ta_{L, N}\|_{C_T \CC_x^{-\al}(\T_L^2)}^q\Big] \les_{T,q,\al} 1,
\label{YY5}
\end{align}
for all finite $q \ge 1$ and all disk $B'$ of radius at least 1.

By \eqref{igmc}, the product estimate (Lemma \ref{LEM:prod}), \eqref{KerY5}, Cauchy-Schwarz's inequality and \eqref{KerY3}, we have
\begin{align}
\begin{split}
\E\Big[ \|\phi_{B'}\Ta_{L, N}\|_{C_T \CC_x^{-\al}(\T_L^2)}^q\Big] & \les \E \Big[ \big\|\phi_{B'}(\g_{L,N} e^{i \be \Psi^1_{L,N}})\big\|_{C_T \CC_x^{-\al}(\T_L^2)}^{2q}\Big]^{\frac12}  \E\Big[ \|e^{i \be \Psi^2_{L,N}}\|_{C_T \CC_x^{10}(\T_L^2)}^{2q}\Big]^{\frac12} \\
& \les_{T,q} \E \Big[ \big\|\phi_{B'}(\g_{L,N} e^{i \be \Psi^1_{L,N}})\big\|_{C_T \CC_x^{-\al}(\T_L^2)}^{2q}\Big]^{\frac12}.
\end{split}
\label{YY6}
\end{align}
Now, as in the proof of Lemma \ref{PROP:psi}, we have that $\phi_{B'}(\g_{L,N} e^{i \be \Psi^1_{L,N}})$ and $\phi_{B'}(\g_{L,N} e^{i \be \Psi^1_{L_0,N}})$, for some $L_0 \sim 1$, share the same law. By \eqref{B3}, \eqref{CN}, \eqref{sN} and \eqref{igmc}, this leads to
\begin{align}
\begin{split}
& \E \Big[ \big\|\phi_{B'}(\g_{L,N} e^{i \be \Psi^1_{L,N}})\big\|_{C_T \CC_x^{-\al}(\T_L^2)}^{2q}\Big]^{\frac12} \\
& \qquad \quad = \E \Big[ \big\|\phi_{B'}(\g_{L,N} e^{i \be \Psi^1_{L_0,N}})\big\|_{C_T \CC_x^{-\al}(\T_{L_0}^2)}^{2q}\Big]^{\frac12} \\
& \qquad \quad = e^{\frac{\be^2}{2}(\s_{L,N} - \s_{L_0,N})} \E \Big[ \big\|\phi_{B'}(\g_{L_0,N} e^{i \be \Psi^1_{L_0,N}})\big\|_{C_T \CC_x^{-\al}(\T_{L_0}^2)}^{2q}\Big]^{\frac12}\\
& \qquad \quad \les \E \Big[ \big\|(\phi_{B'}\Ta_{L_0,N}) e^{-i \be \Psi^2_{L_0,N}}\big\|_{C_T \CC_x^{-\al}(\T_{L_0}^2)}^{2q}\Big]^{\frac12} \\
& \qquad \quad \les \E \Big[\|\phi_{B'}\Ta_{L_0,N}\|_{C_T \CC_x^{-\al}(\T_{L_0}^2)}^{4q}\Big]^{\frac14} \E \Big[\|e^{-i\Psi^2_{L_0,N}} \|_{C_T \CC_x^{10}(\T_{L_0}^2)}^{4q}\Big]^{\frac14} \les_{T,q,\al} 1,\\
\end{split}
\label{YY7}
\end{align}
where we used \eqref{KerY5}, Lemma \ref{LEM:prod}, Cauchy-Schwarz's inequality, \eqref{KerY3} and \eqref{YY1} for $L_0 \les 1$ in the last two lines.

Hence, \eqref{YY5} follows from combining \eqref{YY6} and \eqref{YY7}. The rest of the proof follows from similar arguments and we omit details.
\end{proof}

\begin{remark}
In \cite{HS}, 
the second author with Shen constructed the imaginary Gaussian multiplicative chaos  $\Ta
= \lim_{N \to \infty} \Ta_{L = 1, N}$ in the 
subcritical full range $0 < \be^2 < 8\pi$ by 
working with negative temporal regularity.
See also \cite{OZ}
for an analogous construction
the imaginary Gaussian multiplicative chaos 
 in 
 the hyperbolic case, 
 where   the range of $\be^2$ is smaller ($0 < \be^2 < 6\pi$), 
 exhibiting a sharp contrast to the parabolic case.
 Since we restrict our attention to $0 < \be^2 < 4\pi$ in the current paper, 
we do not need such an argument.
\end{remark}
\section{Proof of Theorem~\ref{THM:main}}\label{SEC:proof}

\subsection{Key deterministic a priori bound}
In this subsection, we fix $L\ge 1$ and
we establish a deterministic global-in-time a priori bound
for a nonlinear heat equation on $\T^2_L$, 
which plays a crucial role in the proof of Theorem~\ref{THM:main}
presented in the next subsection.

Given  $s,s_0>0$
and $T > 0$, 
we define the space $X^{-s_0,s}_T(\T_L^2)$ as $ C((0,T]; \CC^{s}(\T_L^2))$ endowed with the norm:
\begin{align}
\|v\|_{X^{-s_0,s}_T(\T^2_L)} = \sup_{0<t\le T} e^t \min(1,t)^{\frac{s+s_0}{2}} \|v(t)\|_{\CC_x^s(\T^2_L)}.
\label{XX0}
\end{align}

Our goal is to study the following nonlinear heat equation:
\begin{align}
\dt v + (1- \Dl) v = \sum_{\kk \in \{+, -\}} f^\kk(v) \Ta^\kk , \quad (t,x) \in \R_+ \times \T^2_L,
\label{heat1}
\end{align}
where $f^\pm(v) = e^{\pm i \be v}$
and $\Ta^\kk$, $\kk \in \{+, -\}$, 
are given smooth functions.

\begin{proposition}\label{PROP:key}
Let $L \ge 1$, $0 < \be^2 < 4\pi$, and  $\dl >0$ sufficiently small such that
\begin{align}
s: = \frac{\be^2}{4\pi} + 3\dl<1.
\label{cond}
\end{align}
Then, 
there exist $K_1, A>0$ 
and a non-decreasing function $K_2: \R_+ \to \R_+$,
independent of $L \ge 1$,  such that
\begin{align}
\|v\|_{X^{-\dl,s}_T(\T^2_L)} \le   K_1 \|v(0)\|_{\CC^{-\dl}(\T^2_L)} + K_2(T)  
\sum_{\kk \in \{+, -\}}
\|\Ta^\kk\|_{C_T \CC^{2\dl-s}_x(\T^2_L)}^{A}
\label{X0}
\end{align}
 for any $T \ge 1$
 and any solution $v$ to \eqref{heat1} on $[0, T]$.
\end{proposition}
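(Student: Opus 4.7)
The plan is to proceed from the mild Duhamel formulation
\begin{align*}
v(t) = e^{t(\Dl-1)} v(0) + \sum_{\kk \in \{+,-\}} \I\bigl(f^\kk(v)\Ta^\kk\bigr)(t),
\end{align*}
and to bound each summand in the $X^{-\dl,s}_T$-norm separately. The linear part is immediate from Lemma~\ref{LEM:Schauder} combined with the $e^{-t}$ decay from the mass term: one gets $\|e^{t(\Dl-1)}v(0)\|_{\CC^s} \le e^{-t}\min(1,t)^{-(s+\dl)/2}\|v(0)\|_{\CC^{-\dl}}$, which matches exactly the $X$-norm weight $e^t\min(1,t)^{(s+\dl)/2}$ and yields the $K_1\|v(0)\|_{\CC^{-\dl}}$ contribution.

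For the nonlinear term, I would combine three ingredients. First, Bony's product estimate (Lemma~\ref{LEM:prod}) with $s_1 = 2\dl-s < 0$ and $s_2 = s-\dl > 0$ (so $s_1+s_2 = \dl > 0$) gives
\[
\|f^\kk(v)\Ta^\kk\|_{\CC^{2\dl-s}} \les \|f^\kk(v)\|_{\CC^{s-\dl}}\,\|\Ta^\kk\|_{\CC^{2\dl-s}}.
\]
Second, a \emph{sublinear} composition estimate for $e^{\pm i\be v}$ exploiting its $L^\infty$-boundedness: from $|e^{iu}-e^{iv}| \le \min(2,|u-v|) \le 2^{1-\ta}|u-v|^{\ta}$ for any $\ta \in [0,1]$, choosing $\ta = 1-\dl/s$ so that $s\ta = s-\dl$, I obtain $[e^{\pm i\be v}]_{C^{s-\dl}} \les |\be|^\ta [v]_{C^s}^\ta$, and hence $\|f^\kk(v)\|_{\CC^{s-\dl}} \le 1 + C|\be|^\ta\|v\|_{\CC^s}^{\ta}$ with $\ta<1$. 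Third, the Schauder bound inside the Duhamel integral produces a kernel $e^{-(t-t')}\min(1,t-t')^{\dl-s}$, and the ensuing time integrals are finite by Lemma~\ref{LEM:elem} since $s<1$ implies $s-\dl < 1$ and $(s+\dl)/2 < 1$ for $\dl$ small (see \eqref{cond}).

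Inserting these into the Duhamel formula, multiplying by $e^t\min(1,t)^{(s+\dl)/2}$, and substituting $\|v(t')\|_{\CC^s} \le e^{-t'}\min(1,t')^{-(s+\dl)/2}\|v\|_{X^{-\dl,s}_T}$, the exponentials cancel partially: the constant contribution leaves a factor $e^t$ (giving $\le Ce^T\|\Ta^\kk\|$), while the $\|v\|^{\ta}$ contribution leaves a factor $e^{(1-\ta)t}$ (giving $\le Ce^{(1-\ta)T}\|\Ta^\kk\|\,\|v\|_{X^{-\dl,s}_T}^{\ta}$). Splitting the time integrals into $t\le 1$ and $t\ge 1$ and using Lemma~\ref{LEM:elem} together with the elementary bound $\int_0^\infty e^{-(1-\ta)u}\min(1,u)^{\dl-s}\,du < \infty$ confirms the required uniform-in-$L$ bounds. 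One thus arrives at
\[
\|v\|_{X^{-\dl,s}_T} \le \|v(0)\|_{\CC^{-\dl}} + Ce^T\sum_{\kk}\|\Ta^\kk\|_{C_T\CC^{2\dl-s}} + Ce^{(1-\ta)T}\sum_{\kk}\|\Ta^\kk\|_{C_T\CC^{2\dl-s}}\|v\|_{X^{-\dl,s}_T}^{\ta}.
\]

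To close, I would apply Young's inequality $xy \le \tfrac12 y^{1/\ta} + C_\ta x^{1/(1-\ta)}$ to the sublinear term with $y = \|v\|_{X^{-\dl,s}_T}^{\ta}$, which absorbs $\tfrac12\|v\|_{X^{-\dl,s}_T}$ into the left-hand side at the cost of an extra $C_\ta e^T \|\Ta^\kk\|^{1/(1-\ta)}$. Setting $A = 1/(1-\ta) = s/\dl$ and $K_2(T) \sim e^T$ then gives the claimed estimate. The main obstacle is precisely the sublinear composition estimate: a naive linear bound $\|f^\kk(v)\|_{\CC^{s-\dl}} \le 1 + C\|v\|_{\CC^s}$ would force a Grönwall-type iteration producing $\exp(T\|\Ta\|)$-type constants, incompatible with the polynomial power $\|\Ta^\kk\|^A$ with $A$ independent of $\Ta$ required here. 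The interpolation between the $L^\infty$-boundedness $\|e^{\pm i\be v}\|_{L^\infty} = 1$ and the Lipschitz continuity of $e^{\pm i\be\,\cdot}$ is what makes $\ta=1-\dl/s$ strictly less than $1$, and is the decisive structural feature of the sine nonlinearity used in the argument.
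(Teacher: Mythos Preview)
Your proof is correct and follows essentially the same strategy as the paper: Duhamel formulation, Schauder estimate on the linear piece, product estimate plus a \emph{sublinear} composition bound on the nonlinear piece, and Young's inequality to close. The only cosmetic difference is that the paper obtains the crucial bound $\|f^\kk(v)\|_{\CC^{s-\dl}} \les \|v\|_{\CC^s}^{1-\ta}$ via the interpolation inequality $\|f^\kk(v)\|_{\CC^{s-\dl}} \les \|f^\kk(v)\|_{\CC^s}^{1-\ta}\|f^\kk(v)\|_{L^\infty}^{\ta}$ followed by the Lipschitz composition estimate $\|f^\kk(v)\|_{\CC^s}\les \|v\|_{\CC^s}$, whereas you derive it directly from the pointwise inequality $|e^{iu}-e^{iv}|\le \min(2,|u-v|)\le 2^{1-\ta}|u-v|^{\ta}$; these are equivalent for $0<s<1$ and yield the same exponent (your $\ta$ is the paper's $1-\ta$, so your $A=s/\dl$ matches the paper's $A=1/\ta$).
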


\begin{proof}%[Proof of Proposition~\ref{PROP:key}]
Fix $T\ge 1$ and $0 < t \le T$. 
In the following, we only consider the case  $0 < t \le 1$, 
since the case $1 < t \le T$ follows in a similar manner.

By applying Lemma~\ref{LEM:Schauder} and Minkowski's inequality
to the Duhamel formulation of \eqref{heat1}:
\begin{align}
v(t) = e^{t (\Dl - 1)}v(0)  + \sum_{\kk \in \{+, -\}}\I ( f^\kk(v) \Ta^\kk )(t), 
\label{X0a}
\end{align}
where $\I$ is as in \eqref{duha}, 
we have
\begin{align}
\begin{split}
  e^t t^{\frac{s+\dl}{2}}\| v(t) \|_{\CC^s(\T^2_L)}
%& \quad 
&    \les \|v(0)\|_{\CC^{-\dl}(\T^2_L)} \\
& \quad  + e^t t^{\frac{s+\dl}{2}}
\sum_{\kk \in \{+, -\}}
\int_0^t \big\| e^{(t-t')(\Dl - 1)} \big(f^\kk(v) \Ta^\kk\big)(t') \big\|_{\CC_x^s(\T^2_L)} dt'.
\end{split}
\label{X1}
\end{align}
By Lemmas~\ref{LEM:Schauder},~\ref{LEM:prod}, and~\ref{LEM:elem}
with \eqref{cond}, we have
\begin{align}
\begin{split}
& e^t t^{\frac{s+\dl}{2}} \int_0^t \big\| e^{(t-t')(\Dl - 1)} \big(f^\kk(v) \Ta^\kk\big) (t')\big\|_{\CC_x^s(\T^2_L)} dt'\\
&  \quad \les t^{\frac{s+\dl}{2}} \int_0^t (t-t')^{-(s-\dl)} (t')^{-\frac{s+\dl}{2}} \| e^{t'} (t')^{\frac{s+\dl}{2}} f^\kk (v)(t') \Ta^\kk(t') \|_{\CC_x^{2\dl-s}(\T^2_L)} dt' \\
&  \quad \les  t^{\frac{s+\dl}{2}} t^{1 + \frac{\dl-3s}{2}} \cdot \sup_{0< t \le T} \| e^{t} t^{\frac{s+\dl}{2}} f^\kk (v)(t) \Ta^\kk(t) \|_{\CC_x^{2\dl-s}(\T^2_L)} \\
&  \quad \les T^{1+ \dl-s}  \| \Ta^\kk \|_{C_T\CC_x^{2\dl-s}(\T^2_L)} \cdot \sup_{0< t \le T} e^{t} t^{\frac{s+\dl}{2}} \|  f^\kk (v)(t) \|_{\CC_x^{s-\dl}(\T^2_L)}
\end{split}
\label{X2}
\end{align}
for $\kk \in \{+, -\}$.
From  interpolation and the boundedness of $f^\kk$, we have 
\begin{equation}
\|f^\kk(v)\|_{\CC^{s-\dl}(\T^2_L)} 
 \les \|f^\kk(v)\|_{\CC^s(\T^2_L)}^{1-\ta}
\|f^\kk(v)\|_{L^\infty(\T^2_L)}^{\ta}
\les \|v\|^{1-\ta}_{\CC^{s}(\T^2_L)},
\label{X3}
\end{equation}
provided that  $\ta >0$ is sufficiently small such that 
$(1-\ta)s  \ge s-\delta$.
Hence, putting \eqref{X1}, \eqref{X2} and \eqref{X3} together, 
we obtain
\begin{align}
\|v\|_{X^{-\dl, s}_T(\T_L^2)} \les \|v(0)\|_{\CC^{-\dl}(\T^2_L)} + 
T^{\g} e^{\ta T} 
\sum_{\kk \in \{+, -\}}
  \| \Ta^\kk \|_{C_T \CC_x^{2\dl-s}(\T^2_L)} \|v\|_{X^{-\dl,s}_T(\T_L^2)}^{1-\ta}
\label{X4}
\end{align}
for some  $\g >0$.
By applying  Young's inequality, 
it follows from  \eqref{X4} that 
\begin{align*}
\|v\|_{X^{-\dl, s}_T(\T^2_L)} \le C_1 \|v(0)\|_{\CC^{-\dl}(\T^2_L)} + 
C_2(T)  \sum_{\kk \in \{+, -\}} \| \Ta^\kk \|_{C_T \CC_x^{2\dl-s}(\T^2_L)}^\frac1 \ta + \frac{1}{2} 
\|v\|_{X^{-\dl,s}_T(\T^2_L)}
\end{align*}
for some $C_1, C_2(T) > 0$, 
from which we obtain \eqref{X0}.
%This concludes  the proof
%of Proposition~\ref{PROP:key}.
\end{proof}

\begin{remark}
Let $\al = \frac{\be^2}{4\pi} + \dl$, where $\dl > 0$
is as in Proposition~\ref{PROP:key}.
Then,  the proof of 
 Proposition~\ref{PROP:key}
requires 

 \begin{enumerate}[leftmargin=2em]
 \item\label{item:first}
  $s- 2 < - \al$ at the first inequality in \eqref{X2}, 
  where we used  almost two  degrees
  of smoothing from the heat Duhamel integral operator, 

\item
$\al < s$
in applying the product estimate (Lemma~\ref{LEM:prod})
at the last step of \eqref{X2}.

  \end{enumerate}
Hence, we have $\al< s < 2- \al$,
in particular $\al< 1$, 
from which we obtain the restriction $\be^2 < 4\pi$
in Proposition~\ref{PROP:key}
and hence in Theorem~\ref{THM:main}.

In Appendix \ref{SEC:wave}, 
we establish
an analogous global-in-time a priori bound
for the damped nonlinear wave equation \eqref{WW9}.
Due to one degree of smoothing under the damped
wave Duhamel integral operator, 
the restriction $s - 2 <  - \al $ in \ref{item:first} above
is replaced by $s - 1 \le - \al$.
%see \eqref{ZZ6}.
Together with $\al < s$, 
we obtain $\al < s \le 1 - \al$, in particular $\al < \frac 12$.
With $\al = \frac{\be^2}{4\pi} + \dl$, 
this gives a restriction $\be^2 < 2\pi$
in Theorem~\ref{THM:GWP}
on pathwise global well-posedness
of the hyperbolic sine-Gordon model.
\end{remark}

\subsection{Proof of Theorem~\ref{THM:main}} 
 Fix $L\ge 1$ and  $0 < \be^2 < 4\pi$, 
and  small $\dl > 0$, 
 and let $s$ be as in~\eqref{cond}.
 Let $T \ge 1$ to be chosen later and $1 \le p < \infty$.  
 Given $N \in \N$, let $v_{L, N}$ be the solution to~\eqref{v1}, % with initial data $u_{N}(0) - \Psi(0)$, 
 where $ \Law(u_{L, N}(0)) = \rho_{L, N}^\be$. 
Then, we have 
\begin{align*}
v_{L, N} = e^{t (\Dl - 1)} \big( u_{L, N}(0) - \Psi_L(0)\big)  
- \frac{1}{2i} 
\sum_{\kk \in \{+, -\}} \kk \Pii_N \I (  f^\kk ( v_{L, N}) \Ta^\kk_{L, N}), 
\end{align*} 
where $f^\pm(v) = e^{\pm i \be v}$, 
$\Ta_{L, N}^+ = \Ta_{L, N} =  \g_{L, N}  e^{ i \be \Psi_{L, N}}$
as in \eqref{igmc}, 
and 
$\Ta_{L, N}^- =  (\Ta_{L, N})^{-1} = \g_{L, N}  e^{- i \be \Psi_{L, N}}$.

Let  $s, \dl >0$ be as in Proposition~\ref{PROP:key}.
Then, by Proposition~\ref{PROP:key} 
 with \eqref{XX0}
 and Propositions~\ref{PROP:psi} and~\ref{PROP:igmc}, we have
 \begin{align}
\begin{split}
& \E \big[ \| v_{L, N}(T) \|_{\CC^s(\T^2_L)}^p \big] \\
& \quad \le C_p
 \bigg(K_1^p e^{-pT} \Big( \E \big[ \|u_{L, N}(0)\|_{\CC^{-\dl}(\T^2_L)}^p \big] 
+ \E \big[ \|\Psi_L(0)\|_{ \CC^{-\dl}(\T^2_L)}^p \big]\Big) \\
&  \quad\quad    + e^{-pT} K_2^p(T) \, \E \big[ \| \Ta_{L, N} \|_{C_T \CC^{-\frac{\be^2}{4\pi}-\dl}_x(\T^2_L)}^{A p} \big] 
\bigg)\\
& \quad \le \wt K_1 e^{-pT}\,  \E \big[ \|u_{L, N}(0)\|_{\CC^{-\dl}(\T^2_L)}^p \big] + \wt K_2(T,L)
\end{split}
\label{Y1}
\end{align}
for some  $\wt K_1, \wt K_2(T,L) >0$, 
uniformly in $N \in \N$.
Here, we used the fact that $\Law (\Ta_{L, N}^{-} )= \Law (\Ta^+_{L, N} )$
since $\Law (-\Psi_{L, N})= \Law (\Psi_{L, N} )$.

On the other hand, by the invariance of the truncated
sine-Gordon measure 
 $\rho_{L, N}^\be$ (Lemma~\ref{LEM:inv1}), \eqref{expa}, Proposition~\ref{PROP:psi}, 
  and \eqref{Y1}, we have
\begin{equs}
\E &\big[ \| u_{L, N}(0) \|_{\CC^{-\dl}(\T^2_L)}^p \big] 
 = \E \big[ \| u_{L, N}(T) \|_{\CC^{-\dl}(\T^2_L)}^p \big] \\
& \quad \les   \E \big[ \| \Psi_L(T) \|_{\CC^{-\dl}(\T^2_L)}^p \big] +  \E\label{Y2}
 \big[ \| v_{L, N}(T) \|_{\CC^{-\dl}(\T^2_L)}^p \big] \\
& \quad \le \wt K_1 e^{-pT} \, \E \big[ \|u_{L, N}(0)\|_{\CC^{-\dl}(\T^2_L)}^p \big] 
+ \wt K_3(T,L)
\end{equs}

for some $\wt K_3(T,L) > 0$,
uniformly in $N \in \N$.
By choosing sufficiently large $T \gg 1$
such  that $\wt K_1 e^{-pT} < \frac12$, we deduce from \eqref{Y2} that 
\begin{align*}
\sup_{N \in \N} \E \big[ \| u_{L, N}(0) \|_{\CC^{-\dl}(\T^2_L)}^p \big]  < \infty.
\end{align*}
Since the choice of small $\delta>0$ was arbitrary, it follows that the sequence $\{\rho_{L, N}^\be\}_{N \in \N}$
is tight on $\CC^{-\dl}(\T_L^2)$, 
and hence  Theorem~\ref{THM:main} follows from Prokhorov's theorem.

\section{Sine-Gordon model in the full space}
\label{SEC:R2}

In this section, 
by slightly modifying the argument presented in Section 
\ref{SEC:proof}, we present a proof of 
Theorem \ref{THM:main2}
on the construction of 
 the sine-Gordon model in $\R^2$ in the range $0 < \be^2 < 4\pi$.

Given $M \gg1$, 
let $\chi_0, \chi_1 \in C^{\infty}_c(\R^2; [0, 1])$ with 
$\supp (\chi^M_0) \subset \big\{|x|\le \tfrac{4}{3}M\big\}$ and $
\supp (\chi^M_1) \subset \big\{\tfrac{3}{4}M \le |x|\le \tfrac{4}{3}M^2\big\}$ 
such that, setting  $\chi^M_\l(x) = \chi_1(M^{1-\l} x)$ for $\l \ge 2$, 
$\{\chi^M_\l\}_{\l \in \Z_{\ge 0}}$ is an 
 $M$-adic partition of unity on the plane $\R^2$, namely
\begin{align}
\sum_{\l = 0}^\infty \chi^M_\l(x) =  1
\label{A0}
\end{align}
for any $x \in \R^2$.
Note that for $|k-\l| \ge 2$, 
we have 
\begin{align}
\chi^M_k \chi^M_\l = 0.
\label{A1}
\end{align}
Moreover,  we may assume that 
\begin{align}
\|\nb \chi^M_\l \|_{L^\infty}
+ 
\|\Dl \chi^M_\l \|_{L^\infty}    \les M^{-1} \ll 1, 
\label{A2}
\end{align}
uniformly in $\l \in \Z_{\ge 0}$.

Given $s \in \R$ and $\ld \gg 1$, 
we define a weighted H\"older-Besov space $\C^s_{\ld, M}(\R^2)$
as the completion of $C^\infty_c(\R^2)$
with respect to the norm
\begin{align}
\|v\|_{\C^s_{\ld, M}(\R^2)} = \sum_{\l =  0}^\infty e^{-\ld \l} \|\chi^M_\l v\|_{\CC_x^s(\R^2)}.
\label{A3}
\end{align}
See also \cite{MW2, OTWZ}
for different versions of weighted Besov and Sobolev spaces.

By a slight modification of the proof of \cite[Lemma 2.2]{OTWZ}, 
we have the following lemma on compact embedding
of the weighted H\"older-Besov space $\CC^{s}_{\ld, M} (\R^2)$.
Since the required modification is straightforward, we omit its proof.

\begin{lemma}\label{LEM:cpt}
Let $M \gg 1$.
Then,  given any $s > s'$ and $\ld < \ld'$, 
the embedding
\[
\CC^{s}_{\ld, M} (\R^2) \hookrightarrow 
\CC^{s'}_{\ld', M} (\R^2)
\]
is compact.

\end{lemma}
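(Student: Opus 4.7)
The plan is to mimic the standard compact-embedding proof for weighted H\"older--Besov spaces, adapting the argument in \cite{OTWZ} that is explicitly referenced in the statement. The exponential weight gap $\ld'-\ld>0$ will handle the tail, while a diagonal extraction handles each annulus.

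First I would let $\{v_n\}_{n\in\N}$ be a bounded sequence in $\CC^s_{\ld,M}(\R^2)$, say $\|v_n\|_{\CC^s_{\ld,M}}\le R$ for all $n$. For each fixed $\l\in\Z_{\ge 0}$, the functions $\chi^M_\l v_n$ are supported in a fixed compact annulus of radius $O(M^\l)$ and satisfy $\|\chi^M_\l v_n\|_{\CC^s(\R^2)}\le R e^{\ld\l}$. Since the embedding $\CC^s \hookrightarrow \CC^{s'}$ is compact on functions supported in a fixed compact set (a standard consequence of Arzel\`a--Ascoli together with the characterization of H\"older--Besov norms), one can extract, for every fixed $\l$, a subsequence along which $\chi^M_\l v_n$ converges in $\CC^{s'}(\R^2)$. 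A standard Cantor diagonal argument then produces a single subsequence, which I will still denote $\{v_n\}$, such that $\chi^M_\l v_n$ converges in $\CC^{s'}(\R^2)$ for every $\l$.

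It remains to show that this subsequence is Cauchy in $\CC^{s'}_{\ld',M}(\R^2)$. Fix $\eps>0$ and split the defining sum at some cutoff $\l_0$:
\begin{align*}
\|v_n-v_m\|_{\CC^{s'}_{\ld',M}}
=\sum_{\l<\l_0}e^{-\ld'\l}\|\chi^M_\l(v_n-v_m)\|_{\CC^{s'}}
+\sum_{\l\ge\l_0}e^{-\ld'\l}\|\chi^M_\l(v_n-v_m)\|_{\CC^{s'}}.
\end{align*}
For the tail, the continuous embedding $\CC^s\hookrightarrow\CC^{s'}$ together with the weight identity $e^{-\ld'\l}=e^{-(\ld'-\ld)\l}e^{-\ld\l}$ yields
\begin{align*}
\sum_{\l\ge\l_0}e^{-\ld'\l}\|\chi^M_\l(v_n-v_m)\|_{\CC^{s'}}
\les \sup_{\l\ge\l_0}e^{-(\ld'-\ld)\l}\cdot\bigl(\|v_n\|_{\CC^s_{\ld,M}}+\|v_m\|_{\CC^s_{\ld,M}}\bigr)
\les R\,e^{-(\ld'-\ld)\l_0},
\end{align*}
which is $<\eps/2$ provided $\l_0$ is chosen large enough depending on $\eps, R, \ld'-\ld$. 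With $\l_0$ fixed, the finite sum over $\l<\l_0$ is bounded by $\l_0\max_{\l<\l_0}\|\chi^M_\l(v_n-v_m)\|_{\CC^{s'}}$, which tends to $0$ as $n,m\to\infty$ by the $\CC^{s'}$-convergence of each $\chi^M_\l v_n$ obtained above; hence it is $<\eps/2$ for $n,m$ large. This gives the desired Cauchy property and thereby a limit in $\CC^{s'}_{\ld',M}(\R^2)$.

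The only real obstacle is the local compact embedding $\CC^s\hookrightarrow\CC^{s'}$ on compactly supported functions, which is standard but requires that $\chi^M_\l v_n$ be uniformly bounded in $\CC^s$ on a fixed compact set; this is ensured by the uniform bound $R$ combined with the fact that for each individual $\l$ the weight $e^{\ld\l}$ is a finite constant. The whole argument is essentially the $\R^2$-analogue of \cite[Lemma 2.2]{OTWZ}, to which one can also simply appeal after verifying that the $M$-adic partition $\{\chi^M_\l\}$ satisfies the same covering and derivative bounds (cf.\ \eqref{A0}--\eqref{A2}) used there.
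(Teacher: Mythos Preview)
Your argument is correct and is precisely the standard diagonal-extraction-plus-tail argument that the paper has in mind: the paper omits the proof entirely, referring to \cite[Lemma~2.2]{OTWZ} and noting only that the modification to the present $M$-adic setup is straightforward. Your write-up supplies exactly that modification, so there is nothing to add.
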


As in \eqref{XX0}, 
given $s, s_0> 0$ and $T > 0$, 
we define the space $X^{-s_0,s}_T(\R^2)$ as $ C((0,T]; \CC^{s}(\R^2))$ endowed with the norm:
\begin{align}
\|v\|_{X^{-s_0,s}_T(\R^2)} = \sup_{0<t\le T} e^t \min(1,t)^{\frac{s+s_0}{2}} \|v(t)\|_{\CC_x^s(\R^2)}.
\label{A4}
\end{align}
Then, we define $Y^{-s_0,s}_T(\R^2)
= Y^{-s_0,s}_{T, \ld, M}(\R^2)$ 
by the norm: 
\begin{align}
\|v\|_{Y^{-s_0,s}_T(\R^2)}  = \sum_{\l = 0}^\infty e^{-\ld \l}  \|\chi^M_\l v\|_{X^{-s_0, s}_T(\R^2)}.
\label{A5}
\end{align}
Given $A > 0$, we also define
$Z^{s, A}_T(\R^2)= Z^{s, A}_{T, \ld, M}(\R^2)$ by the norm:
\begin{align}
\|\Ta\|_{Z^{s, A}_{T}(\R^2)} & = \sum_{\l =  0}^\infty e^{-\ld \l} \|\chi^M_\l\Ta\|^A_{C_T\CC^s_x(\R^2)}.
\label{A6}
\end{align}

We now consider the following nonlinear heat equation:
\begin{align}
\dt v + (1- \Dl) v = \sum_{\kk \in \{+, -\}} f^\kk(v) \Ta^\kk , \quad (t,x) \in \R_+ \times \R^2,
\label{heat2}
\end{align}
where $f^\pm(v) = e^{\pm i \be v}$
and $\Ta^\kk$, $\kk \in \{+, -\}$, 
are given smooth functions.
As a slight modification of the proof of Proposition \ref{PROP:key}, 
we establish an a priori bound of solutions to \eqref{heat2}.

\begin{proposition}\label{PROP:key2}
Let $0 < \be^2 < 4\pi$ and  $\dl >0$ sufficiently small such that
%$s: = \frac{\be^2}{4\pi} + 3\dl<1$
%\eqref{cond} holds.
\begin{align}
s: = \frac{\be^2}{4\pi} + 3\dl<1.
\label{cond2}
\end{align}

Then, 
there exist  $K_1, A>0$ and a function  $K_2: \R_+^2 \to \R_+$
which is non-decreasing in each argument
such that
\begin{align}
\|v\|_{Y^{-\dl,s}_T(\R^2)} 
& \le K_1  
\|v(0)\|_{\CC^{-\dl}_{\ld, M}(\R^2)}
 + K_2(T,M)  
\sum_{\kk \in \{+, -\}}
\|\Ta^\kk\|_{Z^{2\dl-s, A}_T(\R^2)},
\label{Z0}
\end{align}
for any  $T\ge 1$ and $\ld,M>0$, satisfying  
\begin{align}
T e^{\ld} M^{-1} & \ll 1.
\label{para2}
\end{align}

\end{proposition}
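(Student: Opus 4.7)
The plan is to mimic the Schauder/product bootstrap of Proposition~\ref{PROP:key} scale by scale in the $M$-adic partition $\{\chi^M_\l\}$, and to control the cross-scale interactions via pointwise Gaussian bounds on the heat kernel. Starting from the Duhamel formula
\begin{align*}
v(t) = e^{t(\Delta-1)} v(0) + \sum_{\kappa\in\{+,-\}} \I(f^\kappa(v)\Theta^\kappa)(t),
\end{align*}
I would apply $\chi^M_\l$ to both sides and insert the partition of unity $\sum_k \chi^M_k \equiv 1$ into both the data and the nonlinearity to obtain
\begin{align*}
\chi^M_\l v(t) = \sum_k \chi^M_\l e^{t(\Delta-1)}(\chi^M_k v(0)) + \sum_{\kappa}\sum_k \chi^M_\l \I\big(\chi^M_k f^\kappa(v)\Theta^\kappa\big)(t),
\end{align*}
and then split each $k$-sum into a \emph{diagonal} part ($|k-\l|\le 1$) and an \emph{off-diagonal} part ($|k-\l|\ge 2$).

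For the diagonal contributions, the argument of Proposition~\ref{PROP:key} transfers essentially verbatim: Lemmas~\ref{LEM:Schauder},~\ref{LEM:prod}, and~\ref{LEM:elem} hold on $\R^2$, and the interpolation bound \eqref{X3} is unaffected since it relies only on $|f^\kappa|=1$. Factoring the nonlinear product as $(\chi^M_\l f^\kappa(v))(\chi^M_k \Theta^\kappa)$ before applying Lemma~\ref{LEM:prod} yields the scale-$\l$ analogue of \eqref{X4}:
\begin{align*}
\|\chi^M_\l v\|_{X^{-\dl,s}_T} \les \sum_{|k-\l|\le 1}\|\chi^M_k v(0)\|_{\CC^{-\dl}} + T^{\g}e^{\theta T}\sum_{\kappa,\,|k-\l|\le 1}\|\chi^M_k\Theta^\kappa\|_{C_T\CC^{2\dl-s}_x}\|\chi^M_\l v\|^{1-\theta}_{X^{-\dl,s}_T}.
\end{align*}
For the off-diagonal contributions, \eqref{A1} gives $\chi^M_\l\chi^M_k = 0$, and by the $M$-adic structure the supports of $\chi^M_\l$ and $\chi^M_k$ are in fact separated by a distance $\gtrsim M^{\max(\l,k)-1}$. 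Pointwise Gaussian bounds on the heat kernel then yield, uniformly in $\tau\in(0,T]$,
\begin{align*}
\|\chi^M_\l e^{\tau(\Delta-1)}(\chi^M_k g)\|_{\CC^s} \les e^{-c M^{2\max(\l,k)-2}/\tau}\,\|g\|_{\CC^{-\dl}},
\end{align*}
with the analogous statement for the nonlinear Duhamel piece.

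Multiplying each scale-$\l$ estimate by $e^{-\ld\l}$ and summing over $\l$ reassembles the $Y^{-\dl,s}_T$ norm on the left and the $\CC^{-\dl}_{\ld,M}$ and $Z^{2\dl-s,A}_T$ norms on the right, by the definitions \eqref{A3}, \eqref{A5}, \eqref{A6}. The diagonal part then closes, exactly as in Proposition~\ref{PROP:key}, by Young's inequality applied to the $(1-\theta)$-power. The off-diagonal part takes the form of a double series $\sum_{|k-\l|\ge 2} e^{-\ld(\l-k)} e^{-c M^{2\max(\l,k)-2}/T}(\cdot)$, which under the assumption \eqref{para2} $T e^{\ld} M^{-1} \ll 1$ is bounded by a convergent geometric series with small ratio, and can therefore be absorbed into the left-hand side or into the constants $K_1$ and $K_2(T,M)$ on the right of \eqref{Z0}.

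The main obstacle is the careful bookkeeping of these off-diagonal contributions: one must verify that the Gaussian factor $e^{-c M^{2\max(\l,k)-2}/T}$ dominates the weight mismatch $e^{\ld|k-\l|}$ uniformly after double summation, and that the resulting system of scale-$\l$ estimates can still be closed by Young's inequality without losing the clean diagonal structure. Condition \eqref{para2} is tailored to this geometric series, which explains why $M$ must be chosen large relative to $T e^\ld$.
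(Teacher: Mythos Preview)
Your heat-kernel localization strategy is a legitimate alternative, but the paper takes a different and structurally simpler route: it applies $\chi^M_\l$ to the \emph{PDE} rather than to the Duhamel formula. Writing $v_\l=\chi^M_\l v$ and $\L=\dt+1-\Dl$, one obtains
\[
\L v_\l = -\sum_{|k-\l|\le 1}\big(\Dl\chi^M_\l\, v_k + \nb\chi^M_\l\cdot\nb v_k\big) + \sum_{\kk}\Big(\prod_{|k-\l|\le 1} f^\kk(v_k)\Big)\chi^M_\l\Ta^\kk,
\]
using the exponential factorization $\chi^M_\l e^{\pm i\be v}=\chi^M_\l\prod_{|k-\l|\le 1}e^{\pm i\be v_k}$ (since on $\supp\chi^M_\l$ only neighboring $v_k$ survive, by \eqref{A1}). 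The point is that \emph{no off-diagonal terms ever appear}: the only cross-scale coupling comes from the commutator pieces, which by \eqref{A2} carry a factor $\|\nb\chi^M_\l\|_{L^\infty}+\|\Dl\chi^M_\l\|_{L^\infty}\les M^{-1}$. Running the Schauder/product estimates of Proposition~\ref{PROP:key} on each localized equation then gives
\[
\|v_\l\|_{X^{-\dl,s}_T}\les \|v_\l(0)\|_{\CC^{-\dl}} + TM^{-1}\!\!\sum_{|k-\l|\le 1}\|v_k\|_{X^{-\dl,s}_T} + C(T,M)\sum_\kk\|\chi^M_\l\Ta^\kk\|_{C_T\CC^{2\dl-s}_x}^{1/\ta},
\]
and after multiplying by $e^{-\ld\l}$ and summing, the commutator contribution becomes precisely $Te^{\ld}M^{-1}\|v\|_{Y^{-\dl,s}_T}$. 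This is where \eqref{para2} enters: it is the smallness of the commutator, not of any Gaussian tail.

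By contrast, your off-diagonal Gaussian factor $e^{-cM^{2\max(\l,k)-2}/T}$ decays super-exponentially in $\max(\l,k)$ and would be summable under a much weaker hypothesis than \eqref{para2}; so your claim that \eqref{para2} is ``tailored to this geometric series'' does not really match the mechanism. Your diagonal step also needs the exponential factorization above to localize $f^\kk(v)$ (otherwise $\|\chi^M_k f^\kk(v)\|_{\CC^{s-\dl}}$ sees $v$ globally), and once you use it the right-hand side involves $\|v_m\|$ for all $|m-\l|\le 2$, not just $\|v_\l\|$; this is harmless after summation but should be stated. In short: your plan can be made to work, but the paper's commutator approach avoids the heat-kernel off-diagonal analysis entirely and explains the exact form of \eqref{para2}.
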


\begin{proof}
Fix $T\ge 1$ and $0 < t \le T$. 
As in the proof of Proposition \ref{PROP:key}, 
we only consider the case  $0 < t \le 1$, 
since the case $1 < t \le T$ follows in a similar manner.

For simplicity, 
we write $v_\l$ for $\chi^M_\l v$. 
Then, from \eqref{A0} and \eqref{A1}, we have 
\begin{align}
\chi^M_\l f^\pm(v) = \chi^M_\l e^{\pm i \be v} 
= \chi^M _\l \prod_{k = 0}^\infty e^{\pm i \be v_k}  = 
\chi^M_\l
\prod_{ |k - \l| \le 1}
f^\pm(v_k)
\label{Z1}
\end{align}
for each $\l \in \Z_{\ge 0}$.
Thus, by 
letting $\L = \dt + 1 -\Dl $, it follows from \eqref{heat2} and \eqref{Z1}
with \eqref{A0} and \eqref{A1}
 that 
\begin{align}
\begin{split}
\L (v_\l) & = \L(\chi^M_\l v) \\
& = - \Dl \chi_\l^M v -  \nb \chi^M_\l \cdot \nb v + \sum_{\kk \in \{+, -\}} 
\bigg(\prod_{ |k - \l| \le 1} f^\kk(v_k) \bigg)\chi^M_\l   \Ta^\kk\\
& = - \sum_{ |k - \l| \le 1}\big(\Dl \chi_\l^M v_k +   \nb \chi^M_\l \cdot \nb v_k\big)
+ \sum_{\kk \in \{+, -\}} 
\bigg(\prod_{ |k - \l| \le 1} f^\kk(v_k) \bigg)\chi^M_\l   \Ta^\kk
\end{split}
\label{Z2}
\end{align}
for each $\l \in \Z_{\ge 0}$.

Fix $\l \in \Z_{\ge 0}$.
Then, 
by arguing as in \eqref{X2}-\eqref{X4} 
with Young's inequality, we have 
\begin{align}
\begin{split}
& e^t t^{\frac{s+\dl}{2}} \int_0^t 
\bigg\| e^{(t-t')(\Dl - 1)} \bigg(\prod_{ |k - \l| \le 1} f^\kk(v_k)(t')\bigg) 
\chi^M_\l \Ta^\kk (t')\bigg\|_{\CC_x^s(\R^2)} dt'\\
& \qquad \quad \le C(T,M)  \sum_{\kk \in \{+, -\}} \|\chi^M_\l \Ta^\kk \|_{C_T \CC_x^{2\dl-s}(\R^2)}^\frac1 \ta + M^{-1} \sum_{ |k - \l| \le 1} \|v_k\|_{X^{-\dl,s}_T(\R^2)}
\end{split}
\label{Z3}
\end{align}
for some small $0 < \ta \ll 1$. 
Proceeding as in  \eqref{X2}
with \eqref{cond2}, \eqref{A4}, 
and  \eqref{A2}, we have
\begin{align}
\begin{split}
e^t t^{\frac{s+\dl}{2}} \int_0^t \big\| e^{(t-t')(\Dl - 1)} \big( \Dl \chi_\l^M v_k(t') \big) \big\|_{\CC_x^s(\R^2)} dt' 
& \les T \| \Dl \chi^M_\l \|_{L^\infty} \|v_k\|_{X^{-\dl,s}_T(\R^2)} \\
& \les T M^{-1}\|v_k\|_{X^{-\dl,s}_T(\R^2)}, 
\end{split}
\label{Z4}
\end{align}
uniformly in $\l, k \in \Z_{\ge 0}$ with $|k - \l |\le 1$.
Similarly, by Lemmas \ref{LEM:Schauder}, \ref{LEM:prod}, and \ref{LEM:elem}
with  \eqref{A2} and \eqref{cond2}, we have
\begin{align}
\begin{split}
 e^t & t^{\frac{s+\dl}{2}} \int_0^t \big\| e^{(t-t')(\Dl - 1)} 
 \big( \nb \chi_\l^M \cdot \nb v_k (t') \big) \big\|_{\CC_x^s(\R^2)} 
dt' \\
& \les  t^{\frac{s+\dl}{2}} \int_0^t (t-t')^{-\frac12} (t')^{-\frac{s+\dl}{2}}\| e^{t'} (t')^{\frac{s+\dl}{2}} \nb \chi_\l^M \cdot \nb v_k (t') \|_{\CC_x^{s-1}} dt' \\
& \le T^{\frac 12 } \| \nb \chi_\l^M \|_{\CC^1} \|v_k\|_{X^{-\dl,s}_T(\R^2)} \\
& \les T^{\frac 12 } M^{-1}\|v_k\|_{X^{-\dl,s}_T(\R^2)}, 
\end{split}
\label{Z5}
\end{align}
uniformly in $\l, k \in \Z_{\ge 0}$ with $|k - \l |\le 1$.

Hence, by applying
Lemma \ref{LEM:Schauder}, 
  \eqref{Z2},  \eqref{Z3}, \eqref{Z4} and \eqref{Z5} 
  to the Duhamel formulation~\eqref{X0a}, we obtain
\begin{align}
\begin{split}
\|v_\l\|_{X^{-\dl,s}_T(\R^2)} & \les \|v_\l(0)\|_{\CC^{-\dl}(\R^2)} + T  M^{-1} 
\sum_{ |k - \l| \le 1} \|v_k\|_{X^{-\dl,s}_T(\R^2)} \\
& \quad  + C(T,M)  \sum_{\kk \in \{+, -\}} \|\chi^M_\l \Ta^\kk \|_{C_T \CC_x^{2\dl-s}(\R^2)}^\frac1 \ta,
\end{split}
\label{Z6}
\end{align}
uniformly in $\l \in \Z_{\ge 0}$.
Then, from \eqref{Z6} with \eqref{A5}, \eqref{A3} and \eqref{A6}, we obtain
\begin{align*}
\|v\|_{Y^{-\dl,s}_T(\R^2)} 
& \les 
\|v(0)\|_{\CC^{-\dl}_{\ld, M}(\R^2)} + T  M^{-1} e^{\ld}\|v\|_{Y^{-\dl,s}_T(\R^2)} \\
& \quad  + C(T,M)  
\sum_{\kk \in \{+, -\}}
\|\Ta^\kk\|_{Z^{2\dl-s, \frac{1}{\ta}}_T(\R^2)},
\end{align*}
which 
yields the desired bound
 \eqref{Z0} under the assumption \eqref{para2}.
\end{proof}

We now present a proof of Theorem \ref{THM:main2}.

\begin{proof}[Proof of Theorem \ref{THM:main2}]

Given $L \ge 1$ and $N \in \N$, 
let $\rho^{\be}_{L, N}$ be the truncated sine-Gordon measure in \eqref{mes3},
 viewed 
as a probability measure on  $\mathcal D'(\R^2)$, 
but restricted to 
$2\pi L$-periodic distributions  on $\R^2$.
We fix $T \gg 1$, $M = M(T) \gg1$, $\ld=\ld(T)\gg1 $, 
satisfying~\eqref{para2}.
Given   $A > 0$  as in Proposition \ref{PROP:key2}, 
we choose $p \ge 1$ sufficiently large such that 
\begin{align}
\begin{split}
e^{-\ld} M^{\frac 3 p} & \ll 1.
\end{split}
\label{para}
\end{align}

 Fix 
 $0 < \be^2 < 4\pi$ and small $\dl > 0$
 and let $s$ be as in \eqref{cond2}.
 Given $L \ge 1$, 
 let $\Psi_L$ be as in~\eqref{Psi0}, 
 where we view $\xi_L$ as the $2\pi L$-periodized (in space) white noise
 on $\R \times \R^2$; see \cite[Section 5]{MW2}.
Then, 
let 
  $\Ta_{L, N}$ 
be as in  \eqref{igmc} viewed as a function on $\R_+\times \R^2$, $2\pi L$-periodic in the spatial variable.
Then, from Proposition \ref{PROP:igmc}, we have 
\begin{align}
\E \big[ \| \chi^M_\l \Ta_{L, N}^\kk\|_{C_T \CC_x^{2\dl-s}(\R^2)}^{pA} \big]
 \les_{T,p, A,s,\dl} M^{2(\l+1)}, 
\label{Z8}
\end{align}
uniformly in $L \ge1$, $N \in \N$, $\l \in \Z_{\ge 0}$, 
and 
 $\kk \in \{-1,+1\}$.
Then, from the definition 
 \eqref{A6}, Minkowski's inequality,  \eqref{Z8}, and \eqref{para}, we have
\begin{align}
\begin{split}
\Big\|\|\Ta_{L, N}^\kk\|_{Z^{2\dl-s, A}_T(\R^2)}\Big\|_{L^{p}(\PP)} 
& \le \sum_{\l = 0}^\infty e^{-\ld \l} 
\Big\|\|\chi^M_\l \Ta_{L, N}^\kk\|_{C_T \CC_x^{2\dl-s}(\R^2)}^A\Big\|_{L^{p}(\PP)} \\
& \les \sum_{\l = 0}^\infty e^{-\ld \l} M^{\frac 2 p(\l+1)} \les 1,
\end{split}
\label{Z10}
\end{align}
uniformly in $L \ge1$, $N \in \N$, 
and 
 $\kk \in \{-1,+1\}$.
Similarly, from 
Proposition \ref{PROP:psi}, we have 
\begin{align}
\Big\| \|\Psi_{L}(t)\|_{\CC^{-\dl}_{\ld, M}(\R^2)} \Big\|_{L^{p}(\PP)} \les 1, 
\label{Z11}
\end{align}
uniformly in $L\ge 1$ and $t = 0, T$.

Proceeding as in the proof of Theorem \ref{THM:main} 
(see \eqref{Y1} and \eqref{Y2}) 
with Proposition \ref{PROP:key2}, 
\eqref{A5}, \eqref{Z10}, and \eqref{Z11}, 
we obtain
\begin{align}
\sup_{L\ge 1} \sup_{N \in \N} \E \big[ \| u_{L, N}(0) \|_{\CC^{-\dl}_{\ld, M}(\R^2)}^{p} \big]  < \infty,
\label{Z12}
\end{align}
for any  $T\gg 1$ such that $K_1 e^{-T} \ll1 $,
where $K_1$ is as in 
Proposition \ref{PROP:key2}.

Now, let $\rho_L^\be$ be the weak limit of 
(of a $L$-dependent subsequence of) $\rho_{L, N}^\be$
constructed in Theorem \ref{THM:main}.
Then, 
it follows from \eqref{Z12} that 
\begin{align*}
\sup_{L\ge 1}  \E \big[ \| u_{L}(0) \|_{\CC^{-\dl}_{\ld, M}(\R^2)}^{p} \big]  < \infty.
\end{align*}
Recall that the choice of small $\delta>0$ was arbitrary.
Moreover, given $T,  p \gg1$, 
by choosing $M = M(T) \gg 1$, 
we see that 
 the conditions \eqref{para2} and \eqref{para}
reduces to
$M^{\frac 3p} \ll e^\ld \ll T^{-1} M$.
Namely, the range of admissible 
$\ld = \ld (T, p, M)$ is open.
Hence, it follows 
Lemma \ref{LEM:cpt}.
that the family $\{\rho_{L}^\be\}_{L\ge 1}$
is tight on $\CC^{-\dl}_{\ld, M}(\R^2)$, 
and hence  Theorem~\ref{THM:main2} follows from Prokhorov's theorem.
\end{proof}

%\begin{remark}\rm
%It is possible to modify the argument above to construct the 
%{\Rd massless sine-Gordon measure} since the RHS Schauder estimate in $\R^2$ from Lemma \ref{LEM:Schauder} decays in time. {\bf TODO: to discuss further}
%
%\end{remark}

\appendix

\section{Hyperbolic sine-Gordon model}
\label{SEC:wave}

In this appendix, 
%by adapting the proof of Proposition~\ref{PROP:key}, 
we prove pathwise global well-posedness of 
the 
hyperbolic sine-Gordon model \eqref{WW1} on 
$\T^2 = (\R/2\pi \Z)^2$
(with deterministic initial data) 
for $0 < \be^2 < 2\pi$.

We first go over the renormalization procedure for \eqref{WW1}
with deterministic initial data.
Let $\Psi^\wave$  be the solution to the linear stochastic damped wave equation:
\begin{align*}
\begin{cases}
\dt^2 \Psi^\wave + \dt\Psi^\wave +(1-\Dl)\Psi^\wave  = \sqrt{2}\xi\\
(\Psi^\wave,\dt\Psi^\wave)|_{t=0}=(0,0).
\end{cases}
%\label{WPsi1}
\end{align*}
Then, we have 
\begin{align} 
\Psi^\wave (t) 
&  = 
 \sqrt{2}\int_0^t\D(t - t')\xi(dt'), 
\label{WPsi2}
\end{align}
where 
$\D(t)$ denotes 
 the linear damped wave propagator defined by 
\begin{equation*}
\D(t) = e^{-\frac{t}2}\frac{\sin\Big(t\sqrt{\frac34-\Dl}\Big)}{\sqrt{\frac34-\Dl}}.
\end{equation*} 
Given $N \in \N$, let $\Psi_N^\wave = \Pii_N \Psi^\wave$, 
where $\Pii_N$ is as in \eqref{chi}.
Then, 
for $t \ge 0$, 
a direct computation with \eqref{WPsi2} 
 yields
\begin{align*}
\s_N^\wave(t)
& = \E\big[\big(\Psi_N^\wave(t, x)\big)^2\big]
 = \frac 1{2\pi^2} \sum_{n \in \Z^2}
\frac{\chi_N^2(n)}{\jbb{n}^2} \int_0^t e^{-(t- t')} \sin^2(t-t')\jbb{n}) dt'\\
& = \frac 1{4\pi^2} \sum_{n \in \Z^2}
\frac{\chi_N^2(n)}{\jb{n}^2}(1- e^{-t})
+ G_N (t) \\
&   = \frac {1- e^{-t}}{2\pi}\log N 
+ G_N(t),  
\end{align*}
where $\jbb{n} = \sqrt{\frac 34 + |n|^2}$.
Here, $G_N(t)$ is defined by 
\begin{align*}
G_N(t) = 
\frac {e^{-t} }{4\pi^2} \sum_{n \in \Z^2}
\frac{\chi_N^2(n)}
{\jbb{n}^2}
\bigg(\frac{1}{4\jb{n}^2}
+ \frac{1}{4\jb{n}^2} \cos(2t \jbb{n})
- \frac{\jbb{n}}{2\jb{n}^2} \sin(2t \jbb{n})\bigg), 
\end{align*}
which is uniformly bounded in $N \in \N$ and $t \ge 0$.
As in \eqref{CN}, we then define $\g_N^\wave(t) = \g_N^\wave(t; \be)$
by 
 \begin{equation}
\g_N^\wave (t) = e^{\frac{\be^2}{2}\s_N^\wave(t)}
\les  
N^{(1- e^{-t})\frac{\be^2}{4\pi}} \too \infty,  
\label{WCN}
 \end{equation}
as $N \to \infty$.

The discussion above leads us to the following renormalized 
truncated hyperbolic sine-Gordon model:
\begin{align}
\begin{cases}
\dt^2 u_N + \dt u_N + (1- \Dl)  u_N   +  \g_N^\wave \sin(\be \Pii_N u) = \sqrt{2}\Pii_N \xi\\
(u_N, \dt u_N) |_{t = 0} = (u_0, u_1), 
\end{cases}
\label{WW3}
\end{align}
where the renormalization constant $\g_N^\wave$ is now time-dependent.
Our main interest in this appendix is 
to study pathwise well-posedness, 
and not to construct invariant dynamics.
For this reason, 
we do not place  the frequency projector $\Pii_N$ on the nonlinearity, 
while 
the noise is truncated in frequencies.
Compare this with  \eqref{rSSG1}.
See also Remark~\ref{REM:wave2}.

As in the parabolic case, 
we  proceed with the  first order expansion: 
\begin{align*}
u_N = \Psi_N^\wave + v_N, 
\end{align*}
where $\Psi_N^\wave = \Pii_N \Psi^\wave$.
Then, the remainder term $v_N = u_N - \Psi_N^\wave$
satisfies
\begin{align}
\begin{cases}
 \dt^2 v_N +  \dt v_N  + (1-\Dl)  v_N 
+ \Im (f( v_N) \Ta_N^\wave)= 0\\
%\g_N^\wave  
% \Im  \sin \big(\be( v_N + \Psi_N^\wave  )\big) = 0 \\
(v_N, \dt v_N) |_{t = 0} = (u_0, u_1), 
\end{cases}
\label{WW4}
\end{align}
where $f(v) = e^{i \be v}$
and $\Theta_N^\wave$ is the  imaginary Gaussian multiplicative chaos
associated with the damped wave equation, 
 defined by
\begin{align*}
\Ta_N^\wave  = \g_N^\wave  e^{ i \be \Psi_N^\wave}
= e^{\frac{\be^2}{2}\s_N^\wave}
e^{ i \be \Psi_N^\wave}.
\end{align*}
In view of \eqref{WCN}, 
the (limiting) regularity property of 
$\Ta_N^\wave$ is exactly the same as that 
for $\Ta_N$ in the parabolic case
(as least in the range $0 < \be^2 < 4\pi$)
and thus Proposition~\ref{PROP:igmc}
also holds for $\Ta_N^\wave$.
See \cite[Lemma 2.2]{ORSW}
for a proof in the current hyperbolic case.

Fix $0 < \be^2 < 2\pi$
and small $\dl > 0$. %$\al > \frac{\be^2}{4\pi}$.
Let $\Ta^\wave$
 the limit of $\Ta_N^\wave$
in 
$L^p(\O;C(\R_+;  W^{- \frac{\be^2}{4\pi} - \dl, \infty}(\T^2)))$, 
endowed with the 
topology of uniform convergence on compact sets in time.
Then, by formally taking $N \to \infty$ in \eqref{WW4}, we have
\begin{align}
\begin{cases}
 \dt^2 v +  \dt v  + (1-\Dl)  v
+ \Im (f( v) \Ta^\wave)= 0\\
(v, \dt v) |_{t = 0} = (u_0, u_1). %\in \H^s(\T^2).
\end{cases}
\label{WW4a}
\end{align}

The following  theorem establishes 
pathwise global well-posedness of \eqref{WW4a}.

\begin{theorem}\label{THM:GWP}
Fix $0 < \be^2 < 2\pi$ and let $\dl > 0$ sufficiently small such that 
$\frac{\be^2}{4\pi} + \dl < \frac 12$.
Let $s = 1- \frac{\be^2}{4\pi} - \dl $.
Then, given 
 $(u_0, u_1) \in \H^s(\T^2)
: = H^s(\T^2) \times H^{s-1}(\T^2)$, 
there exists almost surely a unique global-in-time solution 
$(v, \dt v)$ to \eqref{WW4a}
in the class 
$C(\R_+;\H^{s}(\T^2))$.
%$C(\R_+;H^{1-\al}(\T^2))\cap C^1(\R_+;H^{-\al}(\T^2))$. 

\end{theorem}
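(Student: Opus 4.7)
The plan is to transcribe the proof of Proposition~\ref{PROP:key} and the subsequent proof of Theorem~\ref{THM:main} to the hyperbolic setting, replacing the heat Duhamel operator $\I$ of~\eqref{duha} by the damped wave Duhamel operator $F \mapsto -\int_0^t \D(t-t') F(t')\,dt'$. The key algebraic difference, as emphasized in the remark following Proposition~\ref{PROP:key}, is that $\D(t)$ provides only one degree of smoothing instead of two: this replaces the condition $s-2 < -\al$ by the sharper $s-1 \le -\al$, where $\al = \frac{\be^2}{4\pi}+\dl$. Combined with $\al < s$ (required to invoke Bony's product estimate, Lemma~\ref{LEM:prod}, on $f(v)\cdot\Ta^\wave$), this forces $\al < s \le 1-\al$, hence $\al<\tfrac12$, which translates into precisely $\be^2 < 2\pi$.

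For the local theory, I would set up a contraction mapping in a time-weighted space analogous to~\eqref{XX0}, say $X^{-\dl,s}_T(\T^2)$ (or directly $C([0,T];\CC^s(\T^2))$), using the Duhamel formulation
\begin{equs}
v(t) = \dt\D(t)u_0 + \D(t)(u_0+u_1) - \int_0^t \D(t-t')\,\Im\bigl(f(v)(t')\Ta^\wave(t')\bigr)\,dt'.
\end{equs}
The global Lipschitz bound $|f(v_1)-f(v_2)| \le \be|v_1-v_2|$, a Schauder-type estimate for $\D$ with one degree of smoothing, and Lemma~\ref{LEM:prod} close the contraction on a short time interval. One then derives the analog of Proposition~\ref{PROP:key}: a deterministic global-in-time a priori bound
\begin{equs}
\|v\|_{X^{-\dl,s}_T} \le K_1 \|(u_0,u_1)\|_{\H^s} + K_2(T)\sum_{\kk\in\{+,-\}} \|\Ta^\kk\|_{C_T\CC^{-\al}_x}^{A},
\end{equs}
by applying the Schauder estimate for $\D$, bounding $\|f^\kk(v)\Ta^\kk\|_{\CC^{-\al}} \les \|f^\kk(v)\|_{\CC^s}\|\Ta^\kk\|_{\CC^{-\al}}$ via Lemma~\ref{LEM:prod}, and using the interpolation
\begin{equs}
\|f^\kk(v)\|_{\CC^{s-\dl}} \les \|f^\kk(v)\|_{\CC^s}^{1-\ta}\|f^\kk(v)\|_{L^\infty}^\ta \les \|v\|_{\CC^s}^{1-\ta},
\end{equs}
which crucially exploits the global boundedness of $e^{\pm i\be v}$. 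Young's inequality then absorbs the sublinear power of $\|v\|_{X^{-\dl,s}_T}$ on the right into the left, as in~\eqref{X4}, yielding the bound with $A = 1/\ta$.

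Global existence follows by iterating the local theory: the a priori bound controls the solution norm on any finite time interval in terms of the initial data and $\|\Ta^\wave\|_{C_T\CC^{-\al}_x}$, precluding blow-up. The required almost-sure regularity of $\Ta^\wave$ is the hyperbolic analog of Proposition~\ref{PROP:igmc} established in \cite{ORSW}, so one obtains a pathwise global solution, and compatibility with the energy space $\H^s$ is then achieved via the embedding $\CC^{s+\eps}\embeds H^s$ on $\T^2$, with the small loss absorbed into $\dl$. The main obstacle is precisely the reduced smoothing of the damped wave propagator, which both tightens the admissible range of $\be^2$ and demands that the Schauder step lose strictly less than one full derivative; once this accounting is arranged correctly --- and noting that for the damped wave propagator one has the bound $\|\D(t)f\|_{\CC^{s}} \les e^{-ct}\|f\|_{\CC^{s-1}}$ with exponential decay in $t$ playing the role of the factor $e^{-t}$ in~\eqref{XX0} --- the argument is a faithful hyperbolic transcription of the proof of Proposition~\ref{PROP:key}.
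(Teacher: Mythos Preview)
Your approach has a genuine gap: the Schauder-type estimate you invoke for the damped wave propagator in H\"older--Besov spaces,
\[
\|\D(t)f\|_{\CC^{s}(\T^2)} \les e^{-ct}\|f\|_{\CC^{s-1}(\T^2)},
\]
is false. Unlike the heat semigroup, the wave propagator does not gain a full derivative in $L^\infty$-based spaces. In dimension $2$ the half-wave multiplier $e^{\pm it\jbb{\nb}}$ loses $\tfrac{d-1}{2}=\tfrac12$ derivative on $\CC^s$ (this is the sharp Seeger--Sogge--Stein bound for Fourier integral operators), so $\D(t)$ gains only $\tfrac12$ derivative in $\CC^s$, not one. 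With that corrected accounting your scheme would require $s-\tfrac12\le -\al$ together with $\al<s$, forcing $\al<\tfrac14$ and hence only $\be^2<\pi$, not the claimed range $\be^2<2\pi$. The parabolic argument does not transcribe to $\CC^s$ for exactly this reason.

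The paper sidesteps this by working entirely in $L^2$-based Sobolev spaces, where Plancherel gives the full one-derivative gain via the energy estimate
\[
\|v\|_{C_TH^{1-\al}_x}+\|\dt v\|_{C_TH^{-\al}_x}\les \|(u_0,u_1)\|_{\H^{1-\al}}+\|F\|_{L^1_TH^{-\al}_x}
\]
(Lemma~\ref{LEM:toolbox}\,(i)). The nonlinear term is then handled by a Sobolev product estimate $\|f^\kk(v)\Ta^\kk\|_{H^{-\al}}\les \|f^\kk(v)\|_{H^{\al}}\|\Ta^\kk\|_{W^{-\al,\infty}}$ (Lemma~\ref{LEM:toolbox}\,(iii)), and the key sublinear bound comes from the \emph{fractional chain rule} in $H^\al$ (Lemma~\ref{LEM:toolbox}\,(iv)) together with interpolation against $\|f^\kk(v)\|_{L^2}\le C$:
\[
\|f^\kk(v)\|_{H^{\al}}\les \|v\|_{H^{(1-\ta)^{-1}\al}}^{1-\ta}\le \|v\|_{H^{1-\al}}^{1-\ta},
\]
valid provided $(1-\ta)^{-1}\al<1-\al$, i.e.\ $\al<\tfrac12$. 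Young's inequality then closes exactly as you describe. So your high-level strategy (interpolate, exploit boundedness of $e^{\pm i\be v}$, absorb the sublinear power) is right, but it must be run in $H^s$ rather than $\CC^s$; the tools you need are the energy estimate and the fractional chain rule, not a H\"older--Besov Schauder estimate for $\D$.
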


\begin{remark}
By taking $N \to \infty$ in \eqref{WW3}, 
we arrive at
 the (formal) limiting equation:
\begin{align*}
\begin{cases}
\dt^2 u + \dt u + (1- \Dl)  u   +  \infty \cdot \sin(\be  u) = \sqrt{2} \xi\\
(u, \dt u) |_{t = 0} = (u_0, u_1).
\end{cases}
\end{align*}
By pathwise global well-posedness
of the hyperbolic sine-Gordon model, 
we mean that $u = \Psi^\wave + v$, 
where $\Psi^\wave$ is as in  \eqref{WPsi2}
and $v$ is a pathwise global solution to \eqref{WW4a}.
\end{remark}

%
%\begin{proposition}\label{PROP:GWP2}
%\end{proposition}

%
%
%By writing \eqref{WW4a} in the Duhamel formulation, we have  
%\begin{align}
%v_N = S(t)(u_0, u_1) 
%-   \I^\wave  \big(  \Im (f( v_N) \Ta_N^\wave)\big), 
%\label{WW5}
%\end{align} 
%

In order to  prove pathwise global well-posedness
of \eqref{WW4a}, 
we consider  the following deterministic damped nonlinear wave equation on~$\T^2$:
\begin{align}
\begin{cases}
\dt^2 v + \dt v + (1- \Dl) v = \sum_{\kk \in \{+, -\}} f^\kk(v) \Ta^\kk \\
(v, \dt v) |_{t = 0} = (u_0, u_1),  %\in \H^s(\T^2)
\end{cases}
\label{WW9}
\end{align}
where $f^\pm(v) = e^{\pm i \be v}$
and 
\[ \Ta^\kk\in 
L^\infty_\text{loc}(\R_+;W^{-\al,\infty}(\T^2)), \quad \kk \in \{+, -\}\]
for some  
 $\al > 0$. 
A slight modification of the proof of \cite[Proposition 3.1]{ORSW}
yields the following pathwise local well-posedness result  for~\eqref{WW9}.

\begin{lemma}\label{LEM:LWP}
Given $0<\al <\frac12$, 
let $(u_0, u_1) \in \H^{1-\al}(\T^2)$
 and 
 $\U^\kk$, $\kk \in \{+, -\}$,  be distributions in $L^\infty([0,1];W^{-\al,\infty}(\T^2))$. % for any $p\ge 1$. 
 Then,  there exist
 \[T = T\Big(\| (u_0, u_1)\|_{\H^{1-\al}}, 
 \|\Ta^+\|_{L^\infty([0,1];W^{-\al,\infty}_x)},  \|\Ta^-\|_{L^\infty([0,1];W^{-\al,\infty}_x)}\Big)> 0\]
 and a unique solution $v$ to \eqref{WW9} 
 on the interval $[0, T]$, belonging to  the class\textup{:}
 \begin{align*}
 Y^{1-\al}(T): =  C([0,T];H^{1-\al}(\T^2))\cap C^1([0,T];H^{-\al}(\T^2)). %\cap L^{\infty}([0,T];L^{\frac2\al}(\T^2)).
 \end{align*} 
 Moreover, the solution map\textup{:} $(u_0, u_1, \Ta^+, \Ta^-) \mapsto v$ is continuous.
\end{lemma}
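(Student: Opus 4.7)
The plan is to solve \eqref{WW9} by a standard Banach fixed-point argument in the space $Y^{1-\al}(T)$. Writing the damped-wave Duhamel formulation
\[
v(t) = \dot{\D}(t)\, u_0 + \D(t)(u_0 + u_1) + \sum_{\kk \in \{+,-\}} \int_0^t \D(t-t')\bigl[f^\kk(v)(t')\, \U^\kk(t')\bigr]\, dt',
\]
I would define a map $\Gamma$ sending $v$ to the right-hand side above and aim to show that $\Gamma$ is a contraction on a closed ball of $Y^{1-\al}(T)$ for $T$ small enough depending on $\|(u_0,u_1)\|_{\H^{1-\al}}$ and $\|\U^\pm\|_{L^\infty_T W^{-\al,\infty}_x}$.

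The linear part is straightforward: the damped wave propagator $\D(t)$ is the Fourier multiplier $e^{-t/2}\sin(t\jbb{\nb})/\jbb{\nb}$, which gains one spatial derivative uniformly in $t \in [0,1]$; hence $\D(t) : H^{s-1} \to H^s$ and $\dot{\D}(t) : H^s \to H^s$ continuously, from which it easily follows that the homogeneous evolution lies in $Y^{1-\al}(T)$ with norm controlled by $\|(u_0,u_1)\|_{\H^{1-\al}}$. For the nonlinear piece, the one-derivative smoothing of $\D$ reduces matters to a product estimate of the form
\[
\bigl\|f^\kk(v)\, \U^\kk\bigr\|_{L^\infty_T H^{-\al}_x} \les \|\U^\kk\|_{L^\infty_T W^{-\al,\infty}_x}\cdot \Phi\bigl(\|v\|_{Y^{1-\al}(T)}\bigr)
\]
for some continuous function $\Phi$. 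Combined with the factor of $T$ produced by the time integral, this yields the key bound needed for both the self-map property and the contraction (the latter using the pointwise Lipschitz estimate $|f^\kk(v_1) - f^\kk(v_2)| \le |\be|\,|v_1 - v_2|$ together with the analogous product estimate applied to the difference). Continuous dependence on the data $(u_0, u_1, \U^+, \U^-)$ is then automatic from the contraction mapping theorem.

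The main obstacle will be the product estimate above. Since we are in two dimensions with $\al < \tfrac12$, the regularity index $1-\al$ lies in $(\tfrac12,1)$, so $v \in H^{1-\al}$ is \emph{not} bounded and $e^{i\be v}$ is not a priori an element of any H\"older space of positive regularity. The resolution is to use Bony's paraproduct decomposition
\[
f^\kk(v)\, \U^\kk = f^\kk(v) \prec \U^\kk + f^\kk(v) \succ \U^\kk + f^\kk(v) \circ \U^\kk :
\]
the low-high paraproduct is bounded using only $|f^\pm(v)| = 1 \in L^\infty$, whereas the high-low and resonant pieces require placing $f^\kk(v)$ in a space of positive regularity strictly greater than $\al$. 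For this one invokes a fractional Nemytskii/Moser estimate of the form $\|e^{i\be v} - 1\|_{H^{1-\al}} \les C(\|v\|_{H^{1-\al}})$, exploiting that $f^\pm$ are smooth bounded with bounded derivatives. Since $1-\al > \al$ by assumption, the regularity parameters in Bony's product estimate (Lemma~\ref{LEM:prod} together with its Sobolev analogue) close, yielding the desired $H^{-\al}$ bound. All the remaining ingredients -- precise computation of the contraction constant, verification of continuity in time, and continuous dependence of the solution map -- are routine adaptations of the proof of \cite[Proposition~3.1]{ORSW}.
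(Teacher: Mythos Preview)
The paper does not actually prove this lemma --- it simply cites \cite[Proposition~3.1]{ORSW} --- so your sketch is already more detailed than what the paper provides, and it follows the same overall route (Duhamel formulation plus a contraction argument). Your treatment of the self-map is correct: the paraproduct decomposition together with the fractional chain rule for $e^{i\be v}$ is equivalent to the paper's own tools (Lemma~\ref{LEM:toolbox}\,(iii)--(iv)), and it closes for all $0<\al<\tfrac12$, exactly as in the global bound \eqref{ZZ6}--\eqref{ZZ7}.

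There is, however, a genuine subtlety in the contraction step that your parenthetical remark glosses over. The pointwise Lipschitz bound $|f^\kk(v_1)-f^\kk(v_2)|\le|\be|\,|v_1-v_2|$ controls the low-high paraproduct, and the high-low piece also only needs $\|f^\kk(v_1)-f^\kk(v_2)\|_{L^2}$; but the resonant part still requires the difference in a space of regularity strictly above $\al$, with a bound \emph{linear} in $\|v_1-v_2\|_{H^{1-\al}}$. Writing the difference as $(v_1-v_2)\int_0^1 (f^\kk)'\bigl(v_2+\tau(v_1-v_2)\bigr)\,d\tau$ and applying the fractional Leibniz rule of Lemma~\ref{LEM:toolbox}\,(ii), the Sobolev exponents only balance when $\al\le\tfrac13$: one needs $\tfrac{1}{p_2}\ge\tfrac{\al}{2}$ (to control $v_1-v_2$ in $L^{p_2}$) and $\tfrac{1}{q_2}\ge\al$ (to control $e^{i\be w}$ in $W^{\al,q_2}$ via the chain rule and Sobolev), which together with $\tfrac{1}{p_2}+\tfrac{1}{q_2}=\tfrac12$ forces $\al\le\tfrac13$. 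For $\al\in(\tfrac13,\tfrac12)$ this direct route yields at best a H\"older dependence on $v_1-v_2$ (via interpolation between $\|g\|_{L^2}\les\|v_1-v_2\|_{L^2}$ and the crude bound $\|g\|_{H^{1-\al}}\les R$), which does not give a contraction. The argument in \cite{ORSW} supplies the missing ingredient --- additional space-time integrability from wave Strichartz estimates --- so your final citation is doing essential work here, not merely tidying up routine details.
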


Suppose that, given any $T > 0$, 
$\Ta^\kk \in L^\infty([0,T];W^{-\al,\infty}(\T^2))$, $\kk \in \{+, -\}$. 
Then, it follows from Lemma~\ref{LEM:LWP}
that \eqref{WW9} is globally well-posed, 
if 
\begin{align}
 \sup_{0 \le t \le T}
\|(v(t), \dt v(t))\|_{\H^{1- \al}} < \infty
\label{ZZ1}
\end{align}
for each finite $T> 0$.
In the following, we show that \eqref{ZZ1} indeed holds
for $0<\al <\frac12$, 
from which we easily deduce Theorem \ref{THM:GWP}
by setting 
$\al = \frac{\be^2}{4\pi} + \dl$. % (with sufficiently small $\dl > 0$).
% provided that 
% $0 < \be^2 < 2\pi$.

Before proceeding further, 
we state a preliminary lemma.

\begin{lemma}\label{LEM:toolbox}
\begin{enumerate}
\item Given $s \in \R$, 
let $u$ be a solution to  the linear damped wave equation on $\R_+ \times \T^2$\textup{:}
\begin{align}
\begin{cases}
\dt^2 u +\dt u + (1-\Dl) u  = f\\
(u,\dt u)|_{t=0}=(u_0,u_1) \in \H^s(\T^2).
\end{cases}
\label{WZ1}
\end{align}
Then,  
for any $T > 0$, 
we have
\begin{align}
\|u\|_{C_TH^{s}_x} + \|\dt u\|_{C_TH^{s-1}_x} 
 \les \|(u_0,u_1)\|_{\H^s} + \|f\|_{L^1_TH^{s-1}_x}.
\label{WZ2}
\end{align}

\item\textup{(fractional Leibniz rule).}
Let $s> 0$ and 
 $1<p_j,q_j,r\le \infty$ with $\frac1{p_j}+\frac1{q_j}=\frac1r$, $j=1,2$. Then, we have 
\begin{align*}
\big\|\jb{\nabla}^s(fg)\big\|_{L^r(\T^2)} 
\les \big\|\jb{\nabla}^s f\big\|_{L^{p_1}(\T^2)}\|g\|_{L^{q_1}(\T^2)} 
+ \|f\|_{L^{p_2}(\T^2)}\big\|\jb{\nabla}^s g\big\|_{L^{q_2}(\T^2)}.
\end{align*}

\item
Let $s > 0$, $1 < p \le \infty$, and 
 $1<q,r<\infty$ such that $\frac1p+\frac1q\le \frac1r + \frac{s}2$
 and $q, r' \ge p'$. Then, we have
\begin{align*}
\big\|\jb{\nabla}^{-s}(fg)\big\|_{L^r(\T^2)} \les \big\|\jb{\nabla}^{-s}f\big\|_{L^p(\T^2)}
\big\|\jb{\nabla}^s g\big\|_{L^q(\T^2)}.
\end{align*}

\item \textup{(fractional chain rule).}
Let $F$ be a Lipschitz function on $\R$ with Lipschitz constant $L$. Then, given $0 < s <1$ and $1 < p < \infty$, we have
\begin{align*} 
\big\| |\nb|^s F(f) \big\|_{L^p (\T^2)} \les L  \big\| |\nb|^s f \big\|_{L^p (\T^2)}
\end{align*}
for any $f \in C^\infty(\T^2)$.
\end{enumerate}
\end{lemma}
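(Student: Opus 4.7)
The plan is to handle the four parts of Lemma~\ref{LEM:toolbox} by invoking mostly classical harmonic-analysis ingredients, with concrete effort only on part (i), which is the piece tied specifically to the damped wave semigroup at hand.

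For part (i), I would diagonalize in space via the Fourier transform. Setting $\jbb{n}=\sqrt{3/4+|n|^2}$, the equation~\eqref{WZ1} becomes, mode by mode, the scalar ODE $\ddot{\hat u}(n)+\dot{\hat u}(n)+\jb{n}^2\hat u(n)=\hat f(n)$, with fundamental pair $e^{-t/2}\cos(t\jbb{n})$ and $e^{-t/2}\jbb{n}^{-1}\sin(t\jbb{n})$. The first is bounded by $1$ and the second by $\jb{n}^{-1}$, uniformly in $t\ge 0$; differentiating in $t$ replaces these by expressions of size $\jb{n}$ and $1$, respectively. Writing $u$ through Duhamel's formula, taking absolute values mode-by-mode, weighting by $\jb{n}^{2s}$, and summing in $\ell^2$ then gives both halves of~\eqref{WZ2}, with the $\jbb{n}^{-1}$ gain in the sinc factor matching the $H^{s-1}$-regularity of $u_1$ and of $f$, and being lost again (to produce the one derivative drop) when one passes to $\dt u$.

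For (ii), I would invoke the Kato-Ponce product rule via Bony's paraproduct decomposition $fg = T_f g + T_g f + R(f,g)$ together with Bernstein's inequality on each Littlewood-Paley piece. For (iii), I would argue by duality: testing $\jb{\nabla}^{-s}(fg)$ against $h\in L^{r'}$ via H\"older transfers $\jb{\nabla}^s$ to $g$, and Sobolev embedding on $\T^2$ produces exactly the numerology $\frac 1p+\frac 1q \le \frac 1r + \frac{s}{2}$. Part (iv) is the Christ-Weinstein fractional chain rule: one uses the finite-difference characterization of $|\nabla|^s$ and the Lipschitz bound on $F$ to reduce the pointwise difference quotients for $F(f)$ to those for $f$.

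I do not anticipate any real obstacle: parts (ii)--(iv) are textbook (see, e.g., Bahouri-Chemin-Danchin or Grafakos) and could simply be cited; only (i) requires a short handwritten calculation. The main care there is to keep track of the two different pointwise Fourier multiplier bounds for the cosine and sinc factors so that the smoothing gain in the sinc is correctly distributed between the $u_1$ and $f$ contributions on the right-hand side of~\eqref{WZ2}, and so that the $\dt$-derivative acting on the propagator is accounted for as a single derivative loss.
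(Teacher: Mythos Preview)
Your proposal is correct and matches the paper's approach: the paper likewise says that \eqref{WZ2} is ``obvious from writing \eqref{WZ1} in the Duhamel formulation'' (precisely your mode-by-mode computation with the fundamental pair $e^{-t/2}\cos(t\jbb{n})$ and $e^{-t/2}\jbb{n}^{-1}\sin(t\jbb{n})$), and simply cites the literature for (ii)--(iv). Your sketch is in fact more detailed than what the paper provides.
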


The energy estimate \eqref{WZ2} is obvious from 
writing \eqref{WZ1} in the Duhamel formulation.
See~\cite{BOZ} for (ii) and (iii);
see also \cite{GKO}.
As for (iv), see \cite{Gatto}.\footnote{The fractional chain rule on $\R^d$ was essentially proved in \cite{CW}. As pointed out in \cite{Staffilani}, however, 
the proof in \cite{CW} needs
a small correction, which  yields the fractional chain  rule in a 
less general context.}

We now prove \eqref{ZZ1}.
Given $T \ge 1$, 
let $v \in Y^{1-\al}(T)$ be a solution to \eqref{WW9} on the time interval $[0, T]$.
Then, 
by 
Lemma~\ref{LEM:toolbox}, 
we have
\begin{align}
\begin{split}
\| v \|_ {Y^{1-\al}(T)} 
&\les \|(u_0, u_1) \|_{\H^{1-\al}}
+ \sum_{\kk \in \{+, -\}} \| f^\kk(v) \Ta^\kk  \|_{L^1_TH^{-\al}_x}\\
& \les
\|(u_0, u_1) \|_{\H^{1-\al}}
+   T
\sum_{\kk \in \{+, -\}}
  \|f^\kk(v) \|_{L^{\infty}_TH^\al_x}\|\Ta^\kk\|_{C_TW^{-\al,\infty}_x}.
\end{split}
\label{ZZ6}
\end{align}
On the other hand, 
by interpolation, Lemma~\ref{LEM:toolbox}\,(iv), 
and the boundedness of $f^\kk(v)$, 
we have 
\begin{align}
\begin{split}
\|f^\kk(v)\|_{H^\al } 
& \les \| f^\kk(v)\|_{H^{(1-\ta)^{-1}\al }}^{1-\ta} \|f^\kk(v)\|_{L^2}^\ta \\
& \les \| v\|_{H^{(1-\ta)^{-1}\al }}^{1-\ta}
 \le \| v\|_{H^{1-\al}}^{1-\ta}
\end{split}
\label{ZZ7}
\end{align}
for $\kk \in \{+, -\}$, provided that  $\ta >0$ is sufficiently small such that 
\begin{align*}
(1-\ta)^{-1}\al  < {1-\al}, 
\end{align*}
which is possible since $0 <  \al <  \frac 12$.
Hence, from \eqref{ZZ6}, \eqref{ZZ7}, and Young's inequality, 
we obtain
\begin{align*}
\|v\|_{Y^{1-\al}(T)}
&  \le C_1 \|(u_0, u_1) \|_{\H^{1-\al}}
 + 
C_1 T
\sum_{\kk \in \{+, -\}}
\|\Ta^\kk\|_{C_TW^{-\al,\infty}_x}
\|v\|_{Y^{1-\al}(T)}^{1-\ta}
\\
&  \le C_1 \|(u_0, u_1) \|_{\H^{1-\al}}
+ C_2(T)  \sum_{\kk \in \{+, -\}} \| \Ta^\kk \|_{C_T W_x^{-\al, \infty}}^\frac1 \ta + \frac{1}{2} 
\|v\|_{Y^{1-\al}(T)}
\end{align*}
for some $C_1, C_2(T) > 0$.
This proves \eqref{ZZ1}
(and hence Theorem \ref{THM:GWP}).

\begin{remark}\label{REM:wave2}
It is natural to wonder if we can use 
the hyperbolic sine-Gordon model
to 
 implement a PDE construction of the sine-Gordon measure $\rho^\be$ 
 as in the proof of Theorem~\ref{THM:main}.
 Unfortunately, 
due to  the lack of smoothing
under the homogeneous linear damped wave propagator, 
 this is not possible  with the techniques presented in this note.
\end{remark}

\begin{ackno}
T.O.\ was supported by the European Research Council (grant no.~864138 ``SingStochDispDyn")
and  by the EPSRC 
Mathematical Sciences
Small Grant  (grant no.~EP/Y033507/1).
Y.Z.~was   funded by the chair of probability and PDEs at EPFL. 
\end{ackno}

\end{document}